\theoremstyle{plain}
\newtheorem{theorem}{Theorem}
\newtheorem{definition}[theorem]{Definition}
\newtheorem{lemma}[theorem]{Lemma}
\newtheorem{proposition}[theorem]{Proposition}
\newtheorem{corollary}[theorem]{Corollary}
\newtheorem{example}[theorem]{Example}
\newtheorem{assumption}[theorem]{Assumption}
\newcommand\ol{\overline}
\newcommand\EE{{\mathbb E}}
\newcommand\RR{{\mathbb R}}
\newcommand\ZZ{{\mathbb Z}}
\newcommand\NN{{\mathbb N}}
\newcommand\YY{{\mathbb {Y}}}
\newcommand\g{\gamma}
\newcommand\si{\sigma}
\newcommand\q{\quad}
\newcommand\Si{\Sigma}
\renewcommand\ell{l}
\newcommand\CC{\mathbb{C}}
\newcommand\bm{\mathbf{m}}
\newcounter{mycount}
\numberwithin{equation}{section}
\numberwithin{theorem}{section}
\numberwithin{figure}{section}
\title{Limit shape of perfect matchings on rail-yard graphs}
\author{Zhongyang Li}
\address{Department of Mathematics, University of Connecticut, Storrs, CT, 06268}
\email{zhongyang.li@uconn.edu}
\begin{document}

% Enter full title and short title for running headers

\maketitle

\begin{abstract}
We obtain limit shape of perfect matchings on a large class of rail-yard graphs with right boundary condition given by the empty partition, and left boundary condition given by either by a staircase partition with constant density or a piecewise partition with densities either 1 or 0. We prove the parametric equations for the frozen boundary, and find conditions under which the frozen boundary is a cloud curve, or a union of disjoint cloud curves.
\end{abstract}

\section{Introduction}

A dimer cover, or a perfect matching of a graph is a subset of edges in which each vertex is incident to exactly one edge. The dimer model is a probability measure on the sample space consisting of all the perfect matchings of a graph (See \cite{RK09}).
The dimer model has origins from the structures of matter; for example, each perfect matching on a hexagonal lattice corresponds to a double-bond configuration of a graphite molecule; the dimer model on a Fisher graph has a measure-preserving correspondence with the 2D Ising model (see \cite{Fi66,MW73,ZL12}).

Partially inspired by the fact that in the structure of matter the probability of a certain molecule configuration depends on the underlying energy, mathematically we define a probability measure on the set of all perfect matchings of a graph depending on the energy of the dimer configuration, quantified as the product of weights of present edges in the configuration.
The asymptotical behavior and phase transition of the dimer model has been an interesting topic for mathematicians and physicists for a long time. A combinatorial argument shows that the total number of perfect matchings on any finite planar graph can be computed by the Pfaffian of the corresponding weighted adjacency matrix (\cite{Kas61,TF61}). The local statistics can be computed by the inverse of the weighted adjacency matrix (\cite{Ken01}); a complete picture of phase transitions was obtained in \cite{KOS}. Empirical results shows that in large graphs, there are certain regions that the ``randomness" of the configuration is completely lost, i.e. only one type of edges can occur in the dimer configuration. These are called ``frozen regions", and their boundary are called ``frozen boundary".
When the mesh size of the graph goes to 0 such that the graph approximates a simply-connected region in the plane, the limit shape of height functions can be obtained by a variational principle (\cite{ckp}), and the frozen boundary are proved to be an algebraic curve of a specific type (\cite{KO07}). It is also known that the fluctuations of (unrescaled) dimer heights converges to GFF in distribution when the boundary satisfies certain conditions (\cite{Ken01,Li13}).

In this paper, we investigate perfect matchings on a general class of bipartite graphs called rail-yard graph. This type of graphs were defined in \cite{bbccr}, and the formula to compute the partition function of pure dimer coverings on such graphs was also proved in \cite{bbccr}. The major goal of the paper is to study the limit shape of dimer coverings on such graphs whose left boundary conditions are given by either staircase or piecewise partitions and right boundary condition is given by an empty partition.
Special cases of rail-yard graphs include the Aztec diamond (\cite{EKLP92a,EKLP92b,KJ05,bk}), the pyramid partition (\cite{BY09}), the steep tiling (\cite{BCC17}), the tower graph (\cite{BF15}), the contracting square-hexagon lattice (\cite{BL17,ZL18,ZL20,Li182}) and the contracting bipartite graph (\cite{ZL202}).
Dimer coverings on rail-yard graphs are in one-to-one correspondence with sequences of partitions whose distributions are Schur processes. The main technique used in this paper to study limit shape of such dimer coverings is to analyze asymptotics of Schur polynomials as the size of the partition goes to infinity with general value of variables. The asymptotics of dimer coverings on rail-yard graphs with both the left boundary conditions and right boundary conditions given by the empty partition and modified weights were studied in \cite{LV21} by Macdonald processes.

The organization of the paper is as follows. In Section \ref{sect:bk}, we define the rail-yard graph, the perfect matching and the height function, and review related technical facts. In Section \ref{sect:sc}, we obtain the limit shape of dimer coverings on a rail-yard graph whose left boundary condition is given by a stair case partition whose counting measure has constant density.  We then discuss the frozen boundary in some special cases. The main theorems proved in this section are Theorems \ref{t33}, \ref{t38}. In Section \ref{sect:pw},  we obtain the limit shape of dimer coverings on a rail-yard graph whose left boundary condition is given by a piecewise partition with densities either 1 or 0.  We then discuss the cases when the frozen boundary have multiple connected components, each of which is a cloud curve (an algebraic curve of a certain type). The main theorems proved in this section are Theorems \ref{t412}, \ref{t419} and \ref{t421}. In Section \ref{sect:AA}, we review some technical results.

\section{Background}\label{sect:bk}

In this section, we define the rail-yard graph, the perfect matching and the height function, and review related technical facts.

\subsection{Weighted rail-yard graphs}

Let $l,r\in\ZZ$ such that $l\leq r$. Let
\begin{eqnarray*}
[l..r]:=[l,r]\cap\ZZ,
\end{eqnarray*}
i.e., $[l..r]$ is the set of integers between $l$ and $r$. For a positive integer $m$, we use
\begin{eqnarray*}
[m]:=\{1,2,\ldots,m\}.
\end{eqnarray*}
Consider two binary sequences indexed by integers in $[l..r]$ 
\begin{itemize}
\item the $LR$ sequence $\underline{a}=\{a_l,a_{l+1},\ldots,a_r\}\in\{L,R\}^{[l..r]}$;
\item the sign sequence $\underline{b}=(b_l,b_{l+1},\ldots,b_r)\in\{+,-\}^{[l..r]}$.
\end{itemize}
The rail-yard graph $RYG(l,r,\underline{a},\underline{b})$ with respect to integers $l$ and $r$, the $LR$ sequence $\underline{a}$ and the sign sequence $\underline{b}$, is the bipartite graph with vertex set $[2l-1..2r+1]\times \left\{\ZZ+\frac{1}{2}\right\}$. A vertex is called even (resp.\ odd) if its abscissa is an even (resp.\ odd) integer. Each even vertex $(2m,y)$, $m\in[l..r]$ is incident to 3 edges, two horizontal edges joining it to the odd vertices $(2m-1,y)$ and $(2m+1,y)$ and one diagonal edge joining it to
\begin{itemize}
\item the odd vertex $(2m-1,y+1)$ if $(a_m,b_m)=(L,+)$;
\item the odd vertex $(2m-1,y-1)$ if $(a_m,b_m)=(L,-)$;
\item the odd vertex $(2m+1,y+1)$ if $(a_m,b_m)=(R,+)$;
\item the odd vertex $(2m+1,y-1)$ if $(a_m,b_m)=(R,-)$.
\end{itemize} 

See Figure \ref{fig:rye} for an example of a rail-yard graph.
\begin{figure}
\includegraphics[width=.8\textwidth]{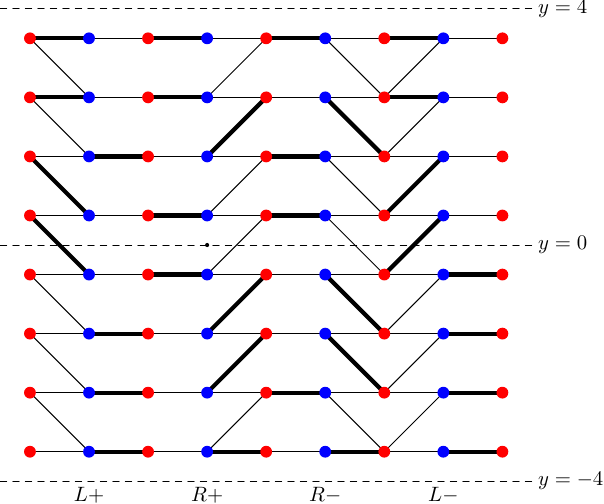}
\caption{A rail yard graph with LR sequence $\underline{a}=\{L,R,R,L\}$and sign sequence $\underline{b}=\{+,+,-,-\}$. Odd vertices are represented by red points, and even vertices are represented by blue points. Dark lines represent a pure dimer covering. Assume that above the horizontal line $y=4$, only horizontal edges with an odd vertex on the left are present in the dimer configuration; and below the horizontal line $y=-4$, only horizontal edges with an even vertex on the left are present in the dimer configuration. }\label{fig:rye}
\end{figure}

The left boundary (resp.\ right boundary) of $RYG(l,r,\underline{a},\underline{b})$ consists of all odd vertices with abscissa $2l-1$ (resp.\ $2r+1$). Vertices which do not belong to the boundaries are called inner. A face of $RYG(l,r,\underline{a},\underline{b})$ is called an inner face if it contains only inner vertices.

We assign edge weights to a rail yard graph $RYG(l,r,\underline{a},\underline{b})$ as follows:
\begin{itemize}
    \item all the horizontal edges have weight 1; and
    \item each diagonal edge adjacent to a vertex with abscissa $2i$ has weight $x_i$.
\end{itemize}

\subsection{Dimer coverings and pure dimer coverings}

\begin{definition}
A dimer covering is a subset of edges of $RYG(l,r,\underline{a},\underline{b})$ such that
\begin{enumerate}
\item each inner vertex of $RYG(l,r,\underline{a},\underline{b})$ is incident to exactly one edge in the subset;
\item each left boundary vertex or right boundary vertex is incident to at most one edge in the subset;
\item only a finite number of diagonal edges are present in the subset.
\end{enumerate}
Note that the set of admissible dimer coverings defined as in \cite{bbccr} is a proper subset of the set of all dimer coverings defined here.

A pure dimer covering of $RYG(l,r,\underline{a},\underline{b})$ is a dimer covering of $RYG(l,r,\underline{a},\underline{b})$ satisfying the following two additional conditions
\begin{itemize}
\item each left boundary vertex $(2l-1,y)$ is incident to exactly one edge (resp.\ no edges) in the subset if $y>0$ (resp.\ $y<0$).
\item each right boundary vertex $(2r+1,y)$ is incident to exactly one edge (resp.\ no edges) in the subset if $y<0$ (resp.\ $y>0$).
\end{itemize}
\end{definition}

See Figure \ref{fig:rye} for an example of pure dimer coverings on a rail yard graph.

For a dimer covering $M$ on the rail-yard graph $RYG(l,r,\underline{a},\underline{b})$, define the associated height function $h_{M}$ on faces of $RYG(l,r,\underline{a},\underline{b})$ as follows. We first define a preliminary height function $\overline{h}_M$ on faces of $RYG(l,r,\underline{a},\underline{b})$. Note that there exists a positive integer $N>0$, such that when $y<-N$, only horizontal edges with even vertices on the left are present.  Fix a face $f_0$ of $RYG(l,r,\underline{a},\underline{b})$ such that the midpoint of $f_0$ is on the horizontal line $y=-N$, and define
\begin{eqnarray*}
\overline{h}_M(f_0)=0.
\end{eqnarray*}
For any two adjacent faces $f_1$ and $f_2$ sharing at least one edge, 
\begin{itemize}
\item If moving from $f_1$ to $f_2$ crosses a present (resp.\ absent) horizontal edge in $M$ with odd vertex on the left, then $\ol{h}_M(f_2)-\ol{h}_M(f_1)=1$ (resp.\ $\ol{h}_M(f_2)-\ol{h}_M(f_1)=-1$).
\item If moving from $f_1$ to $f_2$ crosses a present (resp.\ absent) diagonal edge in $M$ with odd vertex on the left, then $\ol{h}_M(f_2)-\ol{h}_M(f_1)=2$ (resp.\ $\ol{h}_M(f_2)-\ol{h}_M(f_1)=0$).
\end{itemize}

Let $\ol{h}_0$ be the preliminary height function associated to the dimer configuration satisfying 
\begin{itemize}
\item no diagonal edge is present; and
\item each present edge is horizontal with an even vertex on the left.
\end{itemize}
The height function $h_{M}$ associated to $M$ is then defined by 
\begin{eqnarray}
h_M=\ol{h}_M-\ol{h}_0.\label{dhm}
\end{eqnarray}

Let $m\in[l..r]$. Let $x=2m-\frac{1}{2}$ be a vertical line such that all the horizontal edges and diagonal edges of $RYG(l,r,\underline{a},\underline{b})$ crossed by $x=2m-\frac{1}{2}$ have odd vertices on the left. Then for each point $\left(2m-\frac{1}{2},y\right)$ in a face of $RYG(l,r,\underline{a},\underline{b})$, we have
\begin{eqnarray}
h_{M}\left(2m-\frac{1}{2},y\right)=2\left[N_{h,M}^{-}\left(2m-\frac{1}{2},y\right)+N_{d,M}^{-}\left(2m-\frac{1}{2},y\right)\right];\label{hm1}
\end{eqnarray}
where $N_{h,M}^{-}\left(2m-\frac{1}{2},y\right)$ is the total number of present horizontal edges in $M$ crossed by $x=2m-\frac{1}{2}$ below $y$, and $N_{d,M}^{-}\left(2m-\frac{1}{2},y\right)$ is the total number of present diagonal edges in $M$ crossed by $x=2m-\frac{1}{2}$ below $y$. From the definition of a pure dimer covering we can see that both $N_{h,M}^{-}\left(2m-\frac{1}{2},y\right)$ and $N_{d,M}^{-}\left(2m-\frac{1}{2},y\right)$ are finite for each finite $y$.

Note also that $x=2m+\frac{1}{2}$ is a vertical line such that all the horizontal edges and diagonal edges of $RYG(l,r,\underline{a},\underline{b})$ crossed by $x=2m+\frac{1}{2}$ have even vertices on the left. Then for each point $\left(2m+\frac{1}{2},y\right)$ in a face of $RYG(l,r,\underline{a},\underline{b})$, we have
\begin{eqnarray}
h_{M}\left(2m+\frac{1}{2},y\right)=2\left[J_{h,M}^{-}\left(2m+\frac{1}{2},y\right)-N_{d,M}^{-}\left(2m+\frac{1}{2},y\right)\right];\label{hm2}
\end{eqnarray}
where $J_{h,M}^{-}\left(2m+\frac{1}{2},y\right)$ is the total number of absent horizontal edges in $M$ crossed by $x=2m+\frac{1}{2}$ below $y$, and $N_{d,M}^{-}\left(2m+\frac{1}{2},y\right)$ is the total number of present diagonal edges in $M$ crossed by $x=2m+\frac{1}{2}$ below $y$. From the definition of a pure dimer covering we can also see that both $J_{h,M}^{-}\left(2m+\frac{1}{2},y\right)$ and $N_{d,M}^{-}\left(2m+\frac{1}{2},y\right)$ are finite for each finite $y$.

\subsection{Partitions}

Let $N$ be a non-negative integer. A length-$N$ partition is a non-increasing sequence $\lambda=(\lambda_i)_{i\geq 1}^{N}$ of non-negative integers. Let $\mathbb{Y}_N$ be the set of all the length-$N$ partitions. The size of a partition $\lambda\in \YY_N$ is defined by
\begin{eqnarray*}
|\lambda|=\sum_{i=1}^{N}\lambda_i.
\end{eqnarray*}
We denote the length of a partition by $l(\lambda)$. In particular, if $\lambda\in \YY_N$, then $l(\lambda)=N$.
Two partitions $\lambda\in \YY_N$ and $\mu\in\YY_{N-1}$ are called interlaced, and written by $\lambda\succ\mu$ or $\mu\prec \lambda$ if
\begin{eqnarray*}
\lambda_1\geq \mu_1\geq\lambda_2\geq \mu_2\geq \lambda_3\geq \ldots\geq \mu_{N-1}\geq \lambda_N.
\end{eqnarray*}

A length-$\infty$ partition is a non-increasing sequence $\lambda=(\lambda_i)_{i\geq 0}^{N}$ of non-negative integers which vanishes eventually. Each length $N$-partition can be naturally extended to an length-$\infty$ partition by adding infinitely many $0$'s at the end of the sequence.

When representing partitions by Young diagrams, this means $\lambda/\mu$ is a horizontal strip. The conjugate partition $\lambda'$ of $\lambda$ is a partition whose Young diagram $Y_{\lambda'}$ is the image of the Young diagram $Y_{\lambda}$ of $\lambda$ by the reflection along the main diagonal. More precisely
\begin{eqnarray*}
\lambda_i':=\left|\{j\geq 0: \lambda_j\geq i\}\right|,\qquad \forall i\geq 1.
\end{eqnarray*}

The skew Schur functions are defined in Section I.5 of \cite{IGM15}.

\begin{definition}\label{dss}Let $\lambda$, $\mu$ be partitions of finite length. Define the skew Schur functions as follows
\begin{eqnarray*}
&&s_{\lambda/\mu}=\det\left(h_{\lambda_i-\mu_j-i+j}\right)_{i,j=1}^{l(\lambda)}\\
%&&s_{\lambda'/\mu'}=\det\left(e_{\lambda_i-\mu_j-i+j}\right)_{i,j=1}^{l(\lambda)}.
\end{eqnarray*} 
Here for each $r\geq 0$, $h_r$ is the $r$th complete symmetric function defined by the sum of all monomials of total degree $r$ in the variables $x_1,x_2,\ldots$. More precisely,
\begin{eqnarray*}
h_r=\sum_{1\leq i_1\leq i_2\leq \ldots\leq i_r} x_{i_1}x_{i_2}\cdots x_{i_r}
\end{eqnarray*}
If $r<0$, $h_r=0$.

%For each $r\geq 0$, $e_r$ is the $r$th elementary symmetric function defined by the sum of products of $r$ distinct variables in $x_1,x_2,\ldots,x_N$ ($N\geq r$). More precisely,
%\begin{eqnarray*}
%e_r(x_1,\ldots,x_N)=\sum_{1\leq i_1<i_2<\ldots<i_r\leq N} x_{i_1}x_{i_2}\cdots x_{i_r}.
%\end{eqnarray*}

Define the Schur function as follows
\begin{eqnarray*}
s_{\lambda}=s_{\lambda/\emptyset}.
\end{eqnarray*}
\end{definition}

For a dimer covering $M$ of $RYG(l,r,\underline{a},\underline{b})$, we associate a particle-hole configuration to each odd vertex of $RYG(l,r,\underline{a},\underline{b})$ as follows: let $m\in[l..(r+1)]$ and $k\in\ZZ$: if the odd endpoint $\left(2m-1,k+\frac{1}{2}\right)$ is incident to a present edge in $M$ on its right (resp.\ left), then associate a hole (resp.\ particle) to the odd endpoint $\left(2m-1,k+\frac{1}{2}\right)$. When $M$ is a pure dimer covering, it is not hard to check that there exists $N>0$, such that when $y>N$, only holes exist and when $y<-N$, only particles exist.

We associate a partition $\lambda^{(M,m)}$ to the column indexed by $m$ of particle-hole configurations, which corresponds to a pure dimer covering $M$ adjacent to odd vertices with abscissa $(2m-1)$ as follows. Assume
\begin{eqnarray*}
\lambda^{(M,m)}=(\lambda^{(M,m)}_1,\lambda^{(M,m)}_2,\ldots),
\end{eqnarray*}
Then for $i\geq 1$, $\lambda^{(M,m)}_i$ is the total number of holes in $M$ along the vertical line $x=2m-1$ below the $i$th highest particles. Let $l(\lambda^{(M,m)})$ be the total number of nonzero parts in the partition $\lambda^{(M,m)}$.

\begin{figure}
\includegraphics[width=.6\textwidth]{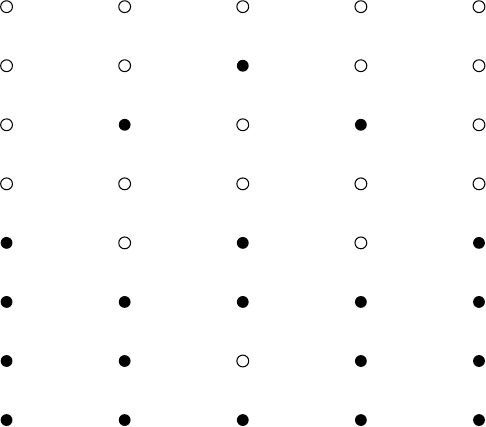}
\caption{Particle-hole configuration corresponding to the dimer covering in Figure \ref{fig:rye}. Particles are represented by black dots, while holes are represented by circles. Each particle/hole represents the configuration of an odd vertex (represented by red points)  in the same location of Figure \ref{fig:rye}. The corresponding sequence of partitions (from the left to the right) is given by $\emptyset\prec(2,0,\ldots)\prec' (3,1,1,\ldots)\succ'(2,0,\ldots)\succ \emptyset$.}\label{fig:rye1}
\end{figure}

We define the charge $c^{(M,m)}$ on column $(2m-1)$ for the configuration $M$ as follows:
\begin{eqnarray}
c^{(M,m)}&=&\mathrm{number\ of\ particles\ on\ column\ }(2m-1)\ \mathrm{in\ the\ upper\ half\ plane}\label{dcg}\\
&&-\mathrm{number\ of\ holes\ on\ column\ }(2m-1)\ \mathrm{in\ the\ lower\ half\ plane}\notag
\end{eqnarray}

The weight of a dimer covering $M$ of $RYG(l,r,\underline{a},\underline{b})$ is defined as follows
\begin{eqnarray*}
w(M):=\prod_{i=l}^{r}x_i^{d_i(M)},
\end{eqnarray*}
where $d_i(M)$ is the total number of present diagonal edges of $M$ incident to an even vertex with abscissa $2i$. 

Let $\lambda^{(l)},\lambda^{(r+1)}$ be two partitions. The partition function $Z_{\lambda^{(l)},\lambda^{(r+1)}}(G,\underline{x})$ of dimer coverings on $RYG(l,r,\underline{a},\underline{b})$ whose configurations on the left (resp.\ right) boundary correspond to partition $\lambda^{(l)}$ (resp.\ $\lambda^{(r+1)}$) is the sum of weights of all such dimer coverings on the graph. Given the left and right boundary conditions $\lambda^{(l)}$ and $\lambda^{(r+1)}$, respectively, the probability of a dimer covering $M$ is then defined by
\begin{eqnarray}
\mathrm{Pr}(M|\lambda^{(l)},\lambda^{(r+1)}):=\frac{w(M)}{Z_{\lambda^{(l)},\lambda^{(r+1)}}(G,\underline{x})}.\label{ppd}
\end{eqnarray}
Note that pure dimer coverings have left and right boundary conditions given by
\begin{eqnarray}
\lambda^{(l)}=\lambda^{(r+1)}=\emptyset;\label{pbc}
\end{eqnarray}
respectively.

The bosonic Fock space $\mathcal{B}$ is the infinite dimensional Hilbert space spanned by the orthonormal basis vectors $|\lambda\rangle$, where $\lambda$ runs over all the partitions. Let $\langle \lambda |$ denote the dual basis vector. Let $x$ be a formal or a complex variable. Introduce the operators $\Gamma_{L+}(x)$, $\Gamma_{L-}(x)$, $\Gamma_{R+}(x)$, $\Gamma_{R-}(x)$ from $\mathcal{B}$  to $\mathcal{B}$ as follows
\begin{eqnarray*}
\Gamma_{L+}(x)|\lambda\rangle=\sum_{\mu\prec \lambda}x^{|\lambda|-|\mu|}|\mu\rangle;\qquad \Gamma_{R+}(x)|\lambda\rangle=\sum_{\mu'\prec \lambda'}x^{|\lambda|-|\mu|}|\mu\rangle;\\
\Gamma_{L-}(x)|\lambda\rangle=\sum_{\mu\succ \lambda}x^{|\mu|-|\lambda|}|\mu\rangle;\qquad \Gamma_{R-}(x)|\lambda\rangle=\sum_{\mu'\succ \lambda'}x^{|\mu|-|\lambda|}|\mu\rangle;
\end{eqnarray*}
Such operators were first studied in \cite{oko01} for random partitions.

\begin{lemma}\label{l12}Let $a_1,a_2\in \{L,R\}$. We have the following commutation relations for the operators $\Gamma_{a_1,\pm}$, $\Gamma_{a_2,\pm}$.
\begin{eqnarray*}
\Gamma_{a_1,+}(x_1)\Gamma_{a_2,-}(x_2)=\begin{cases}\frac{\Gamma_{a_2,-}(x_2)\Gamma_{a_1,+}(x_1)}{1-x_1x_2}&\mathrm{if}\ a_1=a_2\\(1+x_1x_2)\Gamma_{a_2,-}(x_2)\Gamma_{a_1,+}(x_1)&\mathrm{if}\ a_1\neq a_2.\end{cases}.
\end{eqnarray*}
Moreover,
\begin{eqnarray*}
\Gamma_{a_1,b}(x_1)\Gamma_{a_2,b}(x_2)=\Gamma_{a_2,b}(x_2)\Gamma_{a_1,b}(x_1);
\end{eqnarray*}
for all $a_1,a_2\in\{L,R\}$ and $b\in\{+,-\}$.
\end{lemma}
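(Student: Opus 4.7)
The plan is to verify each commutation by expanding both sides on an arbitrary basis vector $|\nu\rangle$ and matching coefficients of $|\mu\rangle$. Using the definitions of $\Ga_{L\pm}$ and $\Ga_{R\pm}$, the coefficient of $|\mu\rangle$ in $\Ga_{a_1,+}(x_1)\Ga_{a_2,-}(x_2)|\nu\rangle$ is a sum over intermediate partitions $\kappa$ obeying two interlacing conditions (type $L$ or $R$ according to $a_2$ and $a_1$), and on the other side one obtains a similar sum indexed by an intermediate $\sigma$ with the roles reversed. Both sums are one-variable monomials in $x_1,x_2$ weighted by indicators, and the lemma thus reduces to a family of combinatorial identities on skew Schur functions in a single variable, since
$$s_{\kappa/\nu}(x)=x^{|\kappa|-|\nu|}\,\mathbf{1}[\nu\prec\kappa]\qquad\text{and}\qquad s_{\kappa'/\nu'}(x)=x^{|\kappa|-|\nu|}\,\mathbf{1}[\nu\prec'\kappa].$$

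The key step is to recognize each of these identities as a one-variable specialization of a (possibly dual) skew Cauchy identity. When $a_1=a_2$, both interlacings are of the same type, and the classical identity
$$\sum_{\kappa}s_{\kappa/\nu}(y)\,s_{\kappa/\mu}(x)\;=\;\frac{1}{1-xy}\sum_{\sigma}s_{\nu/\sigma}(x)\,s_{\mu/\sigma}(y),$$
(or its conjugated form with all partitions transposed, when $a_1=a_2=R$) produces the factor $(1-x_1x_2)^{-1}$. When $a_1\ne a_2$, exactly one of the two skew Schur functions in each sum involves conjugate partitions, and the analogous one-variable dual skew Cauchy identity contributes a factor $(1+xy)$ in place of $(1-xy)^{-1}$; this yields the second branch of the commutation. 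For the same-sign relation $\Ga_{a_1,b}(x_1)\Ga_{a_2,b}(x_2)=\Ga_{a_2,b}(x_2)\Ga_{a_1,b}(x_1)$, an analogous expansion on $|\nu\rangle$ yields, on each side, a sum over a nested two-step interlacing chain of the same type; after relabeling the intermediate partition, the coefficient of $|\mu\rangle$ is $s_{\nu/\mu}(x_1,x_2)$ or a conjugate analogue, which is manifestly symmetric in $x_1$ and $x_2$.

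The main obstacle will be the bookkeeping across the four cases $(a_1,a_2)\in\{L,R\}^2$: one must carefully track which interlacings apply to the partitions themselves and which to their conjugates, and check that the conjugations needed to match the standard or dual Cauchy identity preserve the monomial weights $x_1^{|\cdot|-|\cdot|}x_2^{|\cdot|-|\cdot|}$. Once this translation is set up, both branches of the commutation follow from the one-variable specializations of the classical (dual) skew Cauchy identities in \cite{IGM15}, I.5.
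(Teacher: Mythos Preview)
The paper does not give its own proof; it simply cites Proposition~7 of \cite{bbccr} (and \cite{you10,bbb14}). Your approach is correct and is exactly the standard proof that those references use: expand on basis vectors, reduce the matrix elements to one-variable skew Schur functions, and invoke the (dual) skew Cauchy identities of \cite{IGM15}, I.5. So there is nothing materially different to compare.

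One small imprecision worth flagging: in the same-sign case with $a_1\ne a_2$, the two-step chain from $\nu$ to $\mu$ consists of one horizontal-strip step and one vertical-strip step, so the coefficient is \emph{not} a skew Schur function $s_{\nu/\mu}(x_1,x_2)$ and is not ``manifestly symmetric'' in $x_1,x_2$ by relabeling alone. The two sides differ in the order of the horizontal and vertical steps, and equating them requires one more ingredient: either the one-variable specialization of the dual skew Cauchy identity (which you already have in hand for the $+/-$ case and which works equally well here), a direct bijection between horizontal-then-vertical and vertical-then-horizontal strip chains, or the observation that all four $\Gamma_{a,b}$ are exponentials of linear combinations of the bosonic operators $\alpha_{\pm k}$ and hence same-sign operators commute. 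Any of these closes the gap; just replace ``manifestly symmetric'' with a one-line reference to the appropriate identity.
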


\begin{proof}See Proposition 7 of \cite{bbccr}; see also \cite{you10,bbb14}.
\end{proof}

Given the definitions of the operators $\Gamma_{a,b}(x)$ with $a\in\{L,R\}$, $b\in\{+,-\}$, it is straightforward to check the following lemma.

\begin{lemma}\label{l13}The partition function of dimer coverings on a rail yard graph $G=RYG(l,r,\underline{a},\underline{b})$ with left and right boundary conditions given by $\lambda^{(l)},\lambda^{(r+1)}$, respectively,  is 
\begin{eqnarray}
Z_{\lambda^{(l)},\lambda^{(r+1)}}(G;\underline{x})=\langle\lambda^{(l)}| \Gamma_{a_lb_l}(x_l)\Gamma_{a_{l+1}b_{l+1}}(x_{l+1})\cdots \Gamma_{a_rb_r}(x_r)|\lambda^{(r+1)} \rangle\label{bf}
\end{eqnarray}
\end{lemma}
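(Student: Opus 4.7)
The plan is to establish a weight-preserving bijection between dimer coverings of $RYG(l,r,\underline{a},\underline{b})$ with prescribed boundary partitions $\lambda^{(l)}, \lambda^{(r+1)}$ and sequences of partitions $(\lambda^{(l)}, \lambda^{(l+1)}, \ldots, \lambda^{(r)}, \lambda^{(r+1)})$ indexed by the odd-vertex columns, and then recognize the sum over such sequences as the matrix element on the right-hand side of \eqref{bf} by direct expansion.

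First I would use the particle-hole encoding already introduced in the paper to associate, to a dimer covering $M$, the partition $\lambda^{(m)} := \lambda^{(M,m)}$ read off from column $2m-1$ for each $m \in [l..r+1]$. By construction the boundary conditions on the left and right boundary vertices force $\lambda^{(l)}$ and $\lambda^{(r+1)}$ to match the prescribed partitions. It then remains to analyze, for each $m\in[l..r]$, the local configurations at the even vertices of column $2m$ connecting columns $2m-1$ and $2m+1$. The key step is to check that for each choice $(a_m,b_m)\in\{L,R\}\times\{+,-\}$ of the two binary sequences, the admissible local configurations are in bijection with pairs $(\lambda^{(m)},\lambda^{(m+1)})$ satisfying one of the four interlacing relations $\lambda^{(m)}\succ\lambda^{(m+1)}$, $\lambda^{(m)}\prec\lambda^{(m+1)}$, $(\lambda^{(m)})'\succ(\lambda^{(m+1)})'$, $(\lambda^{(m)})'\prec(\lambda^{(m+1)})'$, matching exactly the four summation rules that define the operators $\Gamma_{L\pm}(x)$ and $\Gamma_{R\pm}(x)$.

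Next I would account for the weights. Every horizontal edge has weight $1$ and each present diagonal edge incident to a vertex of abscissa $2m$ contributes a factor $x_m$. A straightforward count at column $m$ (comparing the number of particles below any fixed height in columns $2m-1$ and $2m+1$, using \eqref{hm1}--\eqref{hm2}) shows that $d_m(M)$ equals $\bigl||\lambda^{(m)}|-|\lambda^{(m+1)}|\bigr|$, with the sign of $|\lambda^{(m)}|-|\lambda^{(m+1)}|$ determined by $b_m$. This is exactly the power of $x$ that appears in the definition of $\Gamma_{a_m,b_m}(x_m)$. Consequently
\begin{equation*}
w(M) \;=\; \prod_{m=l}^{r} x_m^{d_m(M)} \;=\; \prod_{m=l}^{r} x_m^{\,||\lambda^{(m)}|-|\lambda^{(m+1)}||},
\end{equation*}
which is the product of transition weights of the sequence $(\lambda^{(m)})$.

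Finally, summing over dimer coverings with fixed boundary partitions, and using the bijection above, gives
\begin{equation*}
Z_{\lambda^{(l)},\lambda^{(r+1)}}(G;\underline{x}) \;=\; \sum_{\lambda^{(l+1)},\ldots,\lambda^{(r)}} \prod_{m=l}^{r} \langle \lambda^{(m)}|\Gamma_{a_m b_m}(x_m)|\lambda^{(m+1)}\rangle,
\end{equation*}
which is exactly $\langle\lambda^{(l)}|\Gamma_{a_l b_l}(x_l)\cdots \Gamma_{a_r b_r}(x_r)|\lambda^{(r+1)}\rangle$ after inserting the resolution of the identity $\sum_{\mu}|\mu\rangle\langle\mu|$ between consecutive operators. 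The main obstacle I expect is the careful case-by-case verification of the local bijection at a single column $2m$ for each of the four values of $(a_m,b_m)$, including the behavior at the top/bottom of the column where all particles/holes stabilize, and confirming that the interlacing and weight read off from the dimer picture precisely match the definitions of $\Gamma_{L\pm}$ and $\Gamma_{R\pm}$; everything else is packaging.
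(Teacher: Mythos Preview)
Your proposal is correct and is precisely the argument the paper has in mind: the text immediately before Lemma~\ref{l13} says only that ``it is straightforward to check'' the lemma from the definitions of the operators $\Gamma_{a,b}(x)$, and your outline (column-by-column particle--hole reading, the four interlacing cases matched to $(a_m,b_m)$, the diagonal-edge count equal to $\bigl||\lambda^{(m)}|-|\lambda^{(m+1)}|\bigr|$, and the final resolution-of-identity expansion) is exactly that check, in the spirit of the original rail-yard graph paper \cite{bbccr}.
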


\begin{corollary}\label{lc13}
\begin{enumerate}
\item Assume for all
$i\in[l,r]$,
\begin{eqnarray}
(a_i,b_i)\neq (R,-).\label{c151}
\end{eqnarray}
Let $g_l$ be the number of nonzero entries in $\lambda^{(l)}$, then
\begin{eqnarray}
g_l\leq \left|\{i\in[l..r]:a_i=L,b_i=-\}\right|.\label{glu}
\end{eqnarray}
The partition function of dimer coverings on a rail yard graph $G=RYG(l,r,\underline{a},\underline{b})$ with left and right boundary conditions given by $\lambda^{(l)},\emptyset$, respectively,  can be computed as follows:
\begin{eqnarray}
Z_{\lambda^{(l)},\emptyset}(G;\underline{x})=s_{\lambda^{(l)}}\left(\underline{x}^{(L,-)}\right)\prod_{l\leq i<j\leq r;b_i=+,b_j=-}z_{i,j}\label{fp}
\end{eqnarray}
where
\begin{eqnarray}
z_{ij}=\begin{cases}1+x_ix_j&\mathrm{if}\ a_i\neq a_j\\\frac{1}{1-x_ix_j}&\mathrm{if}\ a_i=a_j\end{cases}\label{dzij}
\end{eqnarray}
and
\begin{eqnarray*}
\underline{x}^{(L,-)}:=\{x_i: (a_i,b_i)=(L,-)\}.
\end{eqnarray*}
\item Assume for all
$i\in[l,r]$,
\begin{eqnarray}
(a_i,b_i)\neq (L,-).\label{c152}
\end{eqnarray}
Let $g_l'$ be the number of nonzero entries in $\lambda^{(l)}$, then
\begin{eqnarray*}
g_l'\leq \left|\{i\in[l..r]:a_i=R,b_i=-\}\right|.
\end{eqnarray*}
The partition function of dimer coverings on a rail yard graph $G=RYG(l,r,\underline{a},\underline{b})$ with left and right boundary conditions given by $\lambda^{(l)},\emptyset$, respectively,  can be computed as follows:
\begin{eqnarray}
Z_{\lambda^{(l)},\emptyset}(G;\underline{x})=s_{[\lambda^{(l)}]'}\left(\underline{x}^{(R,-)}\right)\prod_{l\leq i<j\leq r;b_i=+,b_j=-}z_{i,j}\label{fp}
\end{eqnarray}
where
\begin{eqnarray*}
\underline{x}^{(R,-)}:=\{x_i: (a_i,b_i)=(R,-)\}.
\end{eqnarray*}
\end{enumerate}
\end{corollary}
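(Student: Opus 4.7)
The plan is to derive formula (\ref{fp}) from the Fock-space expression for the partition function given in Lemma \ref{l13} in two steps. First, under assumption (\ref{c151}) every operator $\Gamma_{a_i,b_i}(x_i)$ with $b_i=-$ is of type $\Gamma_{L,-}$. I would use Lemma \ref{l12} repeatedly to commute each such $\Gamma_{-}$ operator leftward past every $\Gamma_{+}$ operator preceding it. Each adjacent swap of $\Gamma_{a_i,+}(x_i)$ past $\Gamma_{a_j,-}(x_j)$ with $i<j$ produces exactly the scalar factor $z_{ij}$ defined in (\ref{dzij}), so the cumulative prefactor is $\prod_{l\leq i<j\leq r;\,b_i=+,b_j=-} z_{ij}$. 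Same-sign operators commute among themselves by the second identity in Lemma \ref{l12}, so no additional ordering ambiguity arises when we group the $\Gamma_{-}$'s on the left and the $\Gamma_{+}$'s on the right of the product.

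Second, I would evaluate the remaining matrix element. The block of $\Gamma_{+}$ operators now acts on $|\emptyset\rangle$; since the only $\mu$ with $\mu\prec\emptyset$ (respectively $\mu'\prec\emptyset$) is $\mu=\emptyset$, we have $\Gamma_{L,+}(x)|\emptyset\rangle=\Gamma_{R,+}(x)|\emptyset\rangle=|\emptyset\rangle$, so this block is trivial on the right vacuum. What remains is
\begin{equation*}
\langle \lambda^{(l)}|\prod_{i:\,(a_i,b_i)=(L,-)}\Gamma_{L,-}(x_i)\,|\emptyset\rangle,
\end{equation*}
which, upon expanding each factor via $\Gamma_{L,-}(x)|\mu\rangle=\sum_{\nu\succ\mu}x^{|\nu|-|\mu|}|\nu\rangle$, equals the sum over interlacing chains $\emptyset\prec\mu^{(1)}\prec\cdots\prec\mu^{(k)}=\lambda^{(l)}$ of $\prod_i x_i^{|\mu^{(i)}|-|\mu^{(i-1)}|}$. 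By the standard combinatorial description (semistandard Young tableaux via horizontal strips) this is $s_{\lambda^{(l)}}\bigl(\underline{x}^{(L,-)}\bigr)$. The bound (\ref{glu}) on $g_l$ drops out as a by-product: each $\prec$-step is a horizontal-strip extension that increases the number of nonzero rows by at most one, so after $k=|\{i:(a_i,b_i)=(L,-)\}|$ steps the length of $\lambda^{(l)}$ cannot exceed $k$.

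Part (2) is handled by an essentially identical argument under the dual hypothesis (\ref{c152}): every $\Gamma_{-}$ is now of type $\Gamma_{R,-}$, and using $\Gamma_{R,-}(x)|\mu\rangle=\sum_{\nu'\succ\mu'}x^{|\nu|-|\mu|}|\nu\rangle$ the relevant chains become $\emptyset\prec'\mu^{(1)}\prec'\cdots\prec'\mu^{(k)}=\lambda^{(l)}$, i.e., vertical-strip extensions. Transposing every partition in the chain turns this into a horizontal-strip chain terminating at $[\lambda^{(l)}]'$, and the matrix element evaluates to $s_{[\lambda^{(l)}]'}\bigl(\underline{x}^{(R,-)}\bigr)$. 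The commutation prefactor is unchanged, since Lemma \ref{l12} distinguishes only between $a_i=a_j$ and $a_i\neq a_j$, not between $L$ and $R$ individually. The main obstacle throughout is purely bookkeeping: verifying that sweeping operators through the full sequence contributes exactly one $z_{ij}$ per pair $i<j$ with $b_i=+,b_j=-$, with the correct $a_i=a_j$ versus $a_i\neq a_j$ discrimination; no deeper input beyond Lemmas \ref{l12} and \ref{l13} and the tableau interpretation of Schur polynomials is required.
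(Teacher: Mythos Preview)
Your proposal is correct and follows essentially the same approach as the paper's proof: both use Lemma \ref{l12} to commute all $\Gamma_{-}$ operators to the left of all $\Gamma_{+}$ operators, collecting the scalar prefactor $\prod z_{ij}$, then observe that the $\Gamma_{+}$ block acts trivially on $|\emptyset\rangle$ and identify the remaining $\langle\lambda^{(l)}|\prod\Gamma_{L,-}(x_i)|\emptyset\rangle$ as $s_{\lambda^{(l)}}(\underline{x}^{(L,-)})$ via horizontal-strip chains. The paper phrases the reordering as choosing a permutation $t_l,\ldots,t_r$ with the $-$'s first and inserts an intermediate resolution of the identity $\sum_\mu|\mu\rangle\langle\mu|$, but the content is identical to your sweep argument.
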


\begin{proof}We prove part (1) here; part (2) can be proved using the same technique.
Let
\begin{eqnarray*}
Z_0:=\prod_{l\leq i<j\leq r;b_i=+,b_j=-}z_{i,j}
\end{eqnarray*}
Let $t_{l},t_{l+1},\ldots,t_{r}$ be a permutation of $l,l+1,\ldots,r$ such that there exists $m\in[l-1..r]$ satisfying
\begin{eqnarray*}
b_{t_i}=\begin{cases}-&\mathrm{if}\ i\leq m\\ +&\mathrm{if}\ i\geq m\end{cases};
\end{eqnarray*}
for all $i\in[l..r]$, where
\begin{eqnarray*}
m-l+1=|\{i:a_i=L,b_i=-\}|.
\end{eqnarray*}
Then by (\ref{bf}), we obtain

\begin{eqnarray}
Z_{\lambda^{(l)},\emptyset}(G;\underline{x})&=&\langle\lambda^{(l)}| \Gamma_{a_{t_l}b_{t_l}}(x_{t_l})\Gamma_{a_{t_{l+1}}b_{t_{l+1}}}(x_{t_{l+1}})\cdots \Gamma_{a_{t_r}b_{t_r}}(x_{t_r})|\emptyset \rangle Z_0
\\
&=&\sum_{\mu}\langle\lambda^{(l)}| \Gamma_{a_{t_l}b_{t_l}}(x_{t_l})\ldots\Gamma_{a_{t_m},b_{t_m}}(x_{t_m})
|\mu\rangle\notag\\
&&\times\langle \mu|
\Gamma_{a_{t_{m+1}},b_{t_{m+1}}}(x_{t_{m+1}})\cdots \Gamma_{a_{t_r},b_{t_r}}(x_{t_r})|\emptyset \rangle
Z_0\notag
\end{eqnarray}

Note that 
\begin{eqnarray*}
\langle \mu|
\Gamma_{a_{t_{m+1}},b_{t_{m+1}}}(x_{t_{m+1}})\cdots \Gamma_{a_{t_r},b_{t_r}}(x_{t_r})|\emptyset \rangle
=\begin{cases}1&\mathrm{if}\ \mu=\emptyset\\ 0&\mathrm{otherwise}\end{cases}
\end{eqnarray*}
Hence when (\ref{c151}) holds, we have
\begin{eqnarray*}
Z_{\lambda^{(l)},\emptyset}(G;\underline{x})
&=&\langle\lambda^{(l)}| \Gamma_{a_{t_l}b_{t_l}}(x_{t_l})\ldots\Gamma_{a_{t_m},b_{t_m}}(x_{t_m})
|\emptyset\rangle Z_0
\end{eqnarray*}
In order that 
\begin{eqnarray*}
\langle\lambda^{(l)}| \Gamma_{a_{t_l}b_{t_l}}(x_{t_l})\ldots\Gamma_{a_{t_m},b_{t_m}}(x_{t_m})
|\emptyset\rangle\neq 0,
\end{eqnarray*}
there exists a sequence of partitions
\begin{eqnarray*}
\mu^{(0)}:=\lambda^{(l)}\succ \mu^{(1)}\succ \mu^{(2)}\succ\ldots\succ\mu^{(m-l)}\succ\emptyset
\end{eqnarray*}
The total number of partitions in the sequence is $m-l+1$. Since $\mu^{(i)}/\mu^{(i+1)}$ is a horizontal strip, the number of nonzero parts in $\mu^{(i)}$ is at most one more than that of $\mu^{(i+1)}$, then we obtain (\ref{glu}). Hence we may construct a partition of length $m-l+1$ from $\lambda^{(l)}$ by adding zeros in the end if necessary, and still use $\lambda^{(l)}$ to denote this new partition. Then we obtain
\begin{eqnarray*}
Z_{\lambda^{(l)},\emptyset}(G;\underline{x})
&=& Z_0 s_{\lambda^{(l)}}\left(\underline{x}^{(L,-)}\right).
\end{eqnarray*}
Then part (1) of the lemma follows.
\end{proof}

\noindent{\textbf{Remark.}} Corollary \ref{lc13} generalizes Proposition 2.5 in \cite{BL17}. In particular, if (\ref{c151}) holds, and if whenever $b_i\neq b_j$ we have $a_i\neq a_j$, we obtain exactly Proposition 2.5 in \cite{BL17}.

\bigskip
\noindent{\textbf{Remark.}} The partition function $Z(G;\underline{x})$ is always well-defined as a power series in $\underline{x}$. When we consider the edge weights $x_i$'s to be positive numbers, to make sure the convergence of the power series representing the partition function, we need to assume that
\begin{assumption}\label{ap14}For any $i,j\in[l..r]$, $i<j$, $a_i=a_j$ and $b_i=+$, $b_j=-$ we have
\begin{eqnarray*}
x_ix_j<1.
\end{eqnarray*}
\end{assumption}

\begin{definition}
  \label{df21}
  Let $\mathbf{X}=(x_1,\ldots,x_N)\in\CC^{N}$. Let $\rho$ be a
  probability measure on $\YY_N$. The \emph{Schur generating function}
  $\mathcal{S}_{\rho,\mathbf{X}}(u_1,\ldots,u_N)$ with respect to parameters
  $\mathbf{X}$ is the symmetric Laurent series in $(u_1,\ldots,u_N)$ given by
  \begin{equation*}
    \mathcal{S}_{\rho,\mathbf{X}}(u_1,\ldots,u_N)=
    \sum_{\lambda\in \YY} \rho(\lambda)
    \frac{s_{\lambda}(u_1,\ldots,u_N)}{s_{\lambda}(x_1,\ldots,x_N)}.
  \end{equation*}
\end{definition}

\begin{lemma}
  \label{lm212}
  Let $t\in[l..r]$. Assume (\ref{c151}) holds. Let
\begin{eqnarray*}
\underline{x}^{(L,-,> t)}&=&\{x_i:i\in[t+1...r],a_i=L,b_i=-\}.\\
\underline{x}^{(L,-,\leq t)}&=&\{x_i:i\in[l...t],a_i=L,b_i=-\}
\end{eqnarray*}
Let
\begin{eqnarray*}
\underline{u}^{(L,-,> t)}&=&\{u_i:i\in[t+1...r],a_i=L,b_i=-\}.
\end{eqnarray*}
Let $\rho^t$ be the probability measure for the partitions corresponding to dimer configurations adjacent to the column of odd vertices labeled by $2t-1$ in $RYG(l,r,\underline{a},\underline{b})$, conditional on the left and right boundary condition $\lambda^{(l)}$ and $\emptyset$, respectively.

For $i\in[l..r]$, let
  \begin{eqnarray*}
  w_i=\begin{cases}u_i&\mathrm{if}\ i\in[t+1..r],a_i=L,b_i=-\\ x_i&\mathrm{otherwise}\end{cases}
  \end{eqnarray*}
  and 
  \begin{eqnarray*}
\xi_{ij}=\begin{cases}1+w_iw_j&\mathrm{if}\ a_i\neq a_j\\\frac{1}{1-w_iw_j}&\mathrm{if}\ a_i=a_j\end{cases}.
\end{eqnarray*}
Let
\begin{eqnarray*}
\underline{w}^{(L,-)}=\{w_i:i\in[l..r],a_i=L,b_i=-\}.
\end{eqnarray*}
  Then the generating Schur function $\mathcal{S}_{\rho^t,X^{(N-t)}}$ is given by:
\begin{equation*}
\mathcal{S}_{\rho^t,\underline{x}^{(L,-,> t)}}(\underline{u}^{(L,-,> t)})=
\frac{s_{\lambda^{(l)}}\left(\underline{u}^{(L,-,> t)},x^{(L,-,\leq t)}\right)}{s_{\lambda^{(l)}}(\underline{x}^{(L,-)})}
\prod_{i\in[l..t],b_i=+;}
\prod_{j\in[t+1..r],a_j=L,b_j=-.}\frac{\xi_{ij}}{z_{ij}}.
\end{equation*}
where $z_{ij}$ is defined as in (\ref{dzij}).
\end{lemma}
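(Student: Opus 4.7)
The strategy is to express $\mathcal{S}_{\rho^t,\underline{x}^{(L,-,>t)}}(\underline{u}^{(L,-,>t)})$ as a ratio of matrix elements of the $\Gamma$-operators, reduce to a rail-yard partition function on the same graph with $x_j$ replaced by $u_j$ on the $(L,-)$-edges indexed by $j>t$, and apply Corollary~\ref{lc13} to these modified weights.

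By Lemma~\ref{l13} with a resolution of identity inserted at position $t$,
\begin{equation*}
\rho^t(\mu) = \frac{1}{Z}\langle\lambda^{(l)}|\prod_{i=l}^{t}\Gamma_{a_ib_i}(x_i)|\mu\rangle\langle\mu|\prod_{i=t+1}^{r}\Gamma_{a_ib_i}(x_i)|\emptyset\rangle.
\end{equation*}
Running the argument in the proof of Corollary~\ref{lc13} on the right matrix element --- commute every $\Gamma_{\cdot,+}$ past every $\Gamma_{\cdot,-}$ via Lemma~\ref{l12} (picking up a scalar $Z_0^{(>t)}:=\prod_{t+1\leq i<j\leq r,\,b_i=+,b_j=-}z_{ij}$), annihilate the surviving $\Gamma_{\cdot,+}$ on $|\emptyset\rangle$, and invoke the identity $\langle\mu|\prod_{j\in[t+1..r],\,a_j=L,b_j=-}\Gamma_{L,-}(x_j)|\emptyset\rangle = s_\mu(\underline{x}^{(L,-,>t)})$ (valid because (\ref{c151}) forces $a_j=L$ whenever $b_j=-$) --- yields $\langle\mu|\prod_{i=t+1}^{r}\Gamma_{a_ib_i}(x_i)|\emptyset\rangle = Z_0^{(>t)}\,s_\mu(\underline{x}^{(L,-,>t)})$. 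Substituting into the definition of $\mathcal{S}_{\rho^t}$, the Schur factor in the denominator cancels.

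Now re-express $s_\mu(\underline{u}^{(L,-,>t)}) = \langle\mu|\prod_{j\in[t+1..r],\,a_j=L,b_j=-}\Gamma_{L,-}(u_j)|\emptyset\rangle$ by the same identity in reverse, and collapse the sum using $\sum_\mu|\mu\rangle\langle\mu| = I$ to get
\begin{equation*}
\mathcal{S}_{\rho^t,\underline{x}^{(L,-,>t)}}(\underline{u}^{(L,-,>t)}) = \frac{Z_0^{(>t)}}{Z}\,\langle\lambda^{(l)}|\prod_{i=l}^{t}\Gamma_{a_ib_i}(x_i)\prod_{j\in[t+1..r],\,a_j=L,b_j=-}\Gamma_{L,-}(u_j)|\emptyset\rangle.
\end{equation*}
Since $\Gamma_{a,+}(x)|\emptyset\rangle=|\emptyset\rangle$, append the missing operators $\Gamma_{a_j,+}(x_j)$ for $j\in[t+1..r],\,b_j=+$ at the far right, then commute them leftward into rail-yard order to assemble the full product $\prod_{i=l}^{r}\Gamma_{a_ib_i}(w_i)$; by Lemma~\ref{l12} each required swap of a $\Gamma_{L,-}(u_k)$ past a $\Gamma_{a_j,+}(x_j)$ (with $t+1\leq j<k\leq r$) contributes $\xi_{jk}^{-1}$. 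Applying Corollary~\ref{lc13} to the modified-weight graph evaluates the matrix element as $s_{\lambda^{(l)}}(\underline{w}^{(L,-)})\prod_{l\leq i<j\leq r,\,b_i=+,b_j=-}\xi_{ij}$, and $\underline{w}^{(L,-)}=(\underline{u}^{(L,-,>t)},\underline{x}^{(L,-,\leq t)})$ provides the claimed Schur ratio after dividing by $Z = s_{\lambda^{(l)}}(\underline{x}^{(L,-)})Z_0$ with $Z_0 := \prod_{l\leq i<j\leq r,\,b_i=+,b_j=-}z_{ij}$.

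What remains is the combinatorial cancellation, which is the only nontrivial bookkeeping. The $\xi_{ij}$-factors from Corollary~\ref{lc13} with both indices in $[t+1..r]$ cancel the commutation cost $\prod\xi_{jk}^{-1}$; the $\xi_{ij}$ with both indices in $[l..t]$ equal $z_{ij}$ (because $w=x$ there) and cancel against the matching factor of $Z_0/Z_0^{(>t)}$; the surviving $\xi_{ij}$, with $i\in[l..t]$ and $j\in[t+1..r]$, combine with the remaining $z_{ij}^{-1}$ from $Z_0^{(>t)}/Z_0$ into $\xi_{ij}/z_{ij}$. Using (\ref{c151}) once more, these are precisely the pairs with $b_i=+$, $a_j=L$, $b_j=-$, producing the product stated in the lemma. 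The main obstacle is keeping the three overlapping index ranges (inside $[l..t]$, inside $[t+1..r]$, and across the cut) straight so that each $\xi$- and $z$-factor is accounted for exactly once.
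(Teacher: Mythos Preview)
Your proof is correct and follows essentially the same approach as the paper: both arguments split $\rho^t(\mu)$ at column $t$ via Lemma~\ref{l13}, use Corollary~\ref{lc13} on the right half to extract $s_\mu(\underline{x}^{(L,-,>t)})$, resum $\sum_\mu\langle\lambda^{(l)}|\Gamma\cdots|\mu\rangle\,s_\mu(\underline{u}^{(L,-,>t)})$ into the full matrix element $\langle\lambda^{(l)}|\prod_i\Gamma_{a_ib_i}(w_i)|\emptyset\rangle$, apply Corollary~\ref{lc13} again, and then do the $z/\xi$ bookkeeping across the three index ranges. The only difference is presentational: you spell out the commutation needed to reinsert the $\Gamma_{a_j,+}(x_j)$ on $[t+1..r]$ explicitly, whereas the paper absorbs that step into one line by writing the ratio $\prod_{t+1\le i<j\le r}z_{ij}/\xi_{ij}$ directly.
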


\begin{proof}By Corollary \ref{lc13}(1), we obtain that for $\lambda\in \YY$
\begin{eqnarray*}
\rho^t(\lambda)=\frac{\langle\lambda^{(l)}| \Gamma_{a_lb_l}(x_l)\cdots \Gamma_{a_tb_t}(x_t)|\lambda \rangle
s_{\lambda}\left(\underline{x}^{(L,-,> t)}\right)\prod_{t+1\leq i<j\leq r;b_i=+,b_j=-}z_{i,j}}{s_{\lambda^{(l)}}\left(\underline{x}^{(L,-)}\right)\prod_{l\leq i<j\leq r;b_i=+,b_j=-}z_{i,j}}
\end{eqnarray*}
By Definition \ref{df21}, we have
\begin{eqnarray}
    &&\mathcal{S}_{\rho^t,\underline{x}^{(L,-,> t)}}(\underline{u}^{(L,-,> t)})\notag=
    \sum_{\lambda\in \YY} \rho^t(\lambda)
    \frac{s_{\lambda}(\underline{u}^{(L,-,> t)})}{s_{\lambda}(\underline{x}^{(L,-,>t)})}\notag\\
    &=&\sum_{\lambda\in \YY} 
    \frac{\langle\lambda^{(l)}| \Gamma_{a_lb_l}(x_l)\cdots \Gamma_{a_tb_t}(x_t)|\lambda \rangle
s_{\lambda}(\underline{u}^{(L,-,> t)})\prod_{t+1\leq i<j\leq r;b_i=+,b_j=-}z_{i,j}}{s_{\lambda^{(l)}}\left(\underline{x}^{(L,-)}\right)\prod_{l\leq i<j\leq r;b_i=+,b_j=-}z_{i,j}}\label{sgp11}
  \end{eqnarray}
  Note that
  \begin{eqnarray}
  &&\sum_{\lambda\in \YY} 
    \langle\lambda^{(l)}| \Gamma_{a_lb_l}(x_l)\cdots \Gamma_{a_tb_t}(x_t)|\lambda \rangle
s_{\lambda}(\underline{u}^{(L,-,> t)})\prod_{t+1\leq i<j\leq r;b_i=+,b_j=-}z_{i,j}\notag\\
&=&\langle\lambda^{(l)}| \Gamma_{a_lb_l}(w_l)\cdots \Gamma_{a_rb_r}(w_t)|\emptyset \rangle
\frac{\prod_{t+1\leq i<j\leq r;b_i=+,b_j=-}z_{i,j}}{\prod_{t+1\leq i<j\leq r;b_i=+,b_j=-}\xi_{i,j}}\notag\\
&=&s_{\lambda^{(l)}}(\underline{w}^{(L,-)})
\prod_{l\leq i<j\leq r;b_i=+,b_j=-}\xi_{i,j}
\frac{\prod_{t+1\leq i<j\leq r;b_i=+,b_j=-}z_{i,j}}{\prod_{t+1\leq i<j\leq r;b_i=+,b_j=-}\xi_{i,j}}\label{sgp2}
  \end{eqnarray}
  Then the lemma follows from plugging (\ref{sgp2}) to (\ref{sgp11}).
\end{proof}

\begin{assumption}\label{ap41}Let $\epsilon>0$ be a small positive parameter. 
\begin{enumerate}
\item Let $l^{(\epsilon)}<r^{(\epsilon)}$ be the  integers representing the left and right boundary of the Rail-yard graph depending on $\epsilon$ such that
\begin{eqnarray*}
\lim_{\epsilon\rightarrow 0}\epsilon l^{(\epsilon)}=l^{(0)}<r^{(0)}=\lim_{\epsilon\rightarrow 0}\epsilon r^{(\epsilon)}
\end{eqnarray*}
so that the scaling limit of the sequence of rail-yard graphs $\{\epsilon RYG(l^{(\epsilon)},l^{(\epsilon)},\underline{a}^{(\epsilon)},\underline{b}^{(\epsilon)})\}_{\epsilon>0}$, as $\epsilon\rightarrow 0$, has left boundary given by $x=2l^{(0)}$ and right boundary given by $x=2r^{(0)}$.
\item Assume for each $\epsilon>0$, there exist
\begin{eqnarray*}
l^{(\epsilon)}=v_0^{(\epsilon)}<v_1^{(\epsilon)}<\ldots<v_m^{(\epsilon)}=r^{(\epsilon)};
\end{eqnarray*}
such that for each $p\in[m]$, the sequences $\underline{a}^{(\epsilon)}$ and 
$\underline{b}^{(\epsilon)}$ with indices in $\left(v_{p-1}^{(\epsilon)},v_{p}^{(\epsilon)}\right)$ are $n_p$-periodic. More precisely,
for any $v_{p-1}^{(\epsilon)}< i<j \leq v_p^{(\epsilon)}$, if 
\begin{eqnarray*}
[(i-j)\mod n_p]=0,
\end{eqnarray*}
then
\begin{eqnarray*}
a_i^{(\epsilon)}=a_j^{(\epsilon)}=a_{[i\mod n_p],\ p};\\
b_i^{(\epsilon)}=b_j^{(\epsilon)}=b_{[j\mod n_p],\ p}; 
\end{eqnarray*}
and
\begin{eqnarray*}
x_i=x_j=x_{[i\mod {n_p}]}^{(p)}.
\end{eqnarray*}
where $n_p$ is a positive integer depending on $p$; $a_{[i\mod n_p],\ p}\in\{L,R\}$, $b_{[j\mod n_p],\ p}\in\{+,-\}$, $x_{[i\mod {n_p}]}^{(p)}\in[0,\infty)$ are independent of $\epsilon$. Here for simplicity, we make the range of $[i\mod n_p]$ to be $[n_p]$, i.e. the integers from $1$ to $n_p$.
\item There exist
\begin{eqnarray*}
l^{(0)}=V_0<V_1<\ldots<V_m=r^{(0)},
\end{eqnarray*}
such that
\begin{eqnarray*}
\lim_{\epsilon\rightarrow 0}\epsilon v_p^{(\epsilon)}=V_p,\ \forall p\in[m].
\end{eqnarray*}
\item For each $p\in [m]$, $j\in[n_p]$, $\mathbf{a}\in\{L,R\}$, $\mathbf{b}\in\{+,-\}$
\begin{eqnarray*}
\lim_{\epsilon\rightarrow 0}\frac{\left|\left\{u\in [v_{p-1}^{(\epsilon)}+1..v_p^{(\epsilon)}]\cap \{n\ZZ+j\}: a_u=\mathbf{a},b_u=\mathbf{b}\right\}\right|}{v_p^{(\epsilon)}-v_{p-1}^{(\epsilon)}}&=&\zeta_{\mathbf{a},\mathbf{b},j,p}%\\
%\lim_{\epsilon\rightarrow 0}\frac{\left|\left\{u\in [v_{p-1}^{(\epsilon)}+1..v_p^{(\epsilon)}]\cap \{n\ZZ+j\}: a_u=L,b_u=-\right\}\right|}{v_p^{(\epsilon)}-v_{p-1}^{(\epsilon)}}&=&\zeta_{L,-,j,p}\\
%\lim_{\epsilon\rightarrow 0}\frac{\left|\left\{u\in [v_{p-1}^{(\epsilon)}+1..v_p^{(\epsilon)}]\cap \{n\ZZ+j\}: a_u=R,b_u=+\right\}\right|}{v_p^{(\epsilon)}-v_{p-1}^{(\epsilon)}}&=&\zeta_{R,+,j,p}\\
%\lim_{\epsilon\rightarrow 0}\frac{\left|\left\{u\in [v_{p-1}^{(\epsilon)}+1..v_p^{(\epsilon)}]\cap \{n\ZZ+j\}: a_u=R,b_u=-\right\}\right|}{v_p^{(\epsilon)}-v_{p-1}^{(\epsilon)}}&=&\zeta_{R,-,j,p}
\end{eqnarray*}
\end{enumerate}
\end{assumption}

\subsection{Differential operator}
Let 
\begin{equation*}
V(u_1,\ldots, u_N)=\prod_{1\leq i<j\leq N}(u_i-u_j)
\end{equation*}
be the Vandermonde determinant with respect to variables $u_1,\ldots,u_N$.
Introduce the family $(\mathcal{D}_k)$ of differential operators acting on
symmetric functions $f$ with variables $u_1,\ldots, u_N$ as follows:
\begin{equation}
\mathcal{D}_k f=\frac{1}{V}\left(\sum_{i=1}^{N}\left(u_i\frac{\partial}{\partial
u_i}\right)^k\right)(V\cdot f)\label{dk}.
\end{equation}

\subsection{Counting measure}Let $\lambda$ be a length-$N$ partition. We define the counting measure $m(\lambda)$ as a probability measure on $\RR$ as follows:
\begin{eqnarray*}
m(\lambda)=\frac{1}{N}\sum_{i=1}^{N}\delta\left(\frac{\lambda_i+N-i}{N}\right).
\end{eqnarray*}
If $\lambda$ is random, then we can define the corresponding random counting measure.

Let $\rho$ be a probability measure on $\YY_N$ with Schur generating function given by $\mathcal{S}_{\rho,\mathbf{X}}(u_1,\ldots,u_N)$. Let $\mathbf{m}_{\rho}$ be the random counting measure for a random partition $\lambda\in \YY_N$ with distribution $\rho$.
Then explicit computations show that
\begin{eqnarray}
\EE\left(\int_{\RR}x^k\mathbf{m}_{\rho}(dx)\right)^m=
    \frac{1}{N^{m(k+1)}}
    (\mathcal{D}_k)^m\mathcal{S}_{\rho,\mathbf{X}}(u_1,\ldots,u_N)|_{(u_1,\ldots,u_N)=(x_1,\ldots,x_N)}.\label{emt}
\end{eqnarray}

\section{Limit shape for staircase boundary condition}\label{sect:sc}
In this section, we obtain the limit shape of dimer coverings on a rail-yard graph whose left boundary condition is given by a staircase partition whose counting measure has constant density $\frac{1}{M}$. The main theorem proved in this section is Theorem \ref{t33}. We then discuss the frozen boundary in the special case when $M=1$ or $M=2$.

To make precise the staircase boundary condition, 
 we make the following assumption about the left boundary condition $\lambda^{(l)}$.
 \begin{assumption}\label{ap110}
 Assume the
 configuration on left is given by the following very specific partition:
 \begin{equation}
 \lambda^{(l)}=((M-1)(N-1),(M-1)(N-2),\ldots,(M-1),0),\label{scb}
 \end{equation}
 where $M\geq 1$ is a positive integer and 
 \begin{eqnarray}
N:= N_{L,-}=|\{i\in[l..r]:a_i=L,b_i=-\}|.\label{dln}
 \end{eqnarray}
 In other words, the length of the partition corresponding to the configuration on the left boundary is the same as the number
 of columns of the rail-yard graph with paramenters satisfying $(a_i,b_i)=(L,-)$; between each pair of non-adjacent nearest
particles on the left boundary, there are $(M-1)$ removed vertices.
 \end{assumption}

See Figure \ref{fig:ryy} for an example of a dimer covering on a rail yard graph with left boundary condition given by $(4,2,0)$ and right boundary condition given by the empty partition $\emptyset$.
 
  \begin{figure}
\includegraphics[width=.6\textwidth]{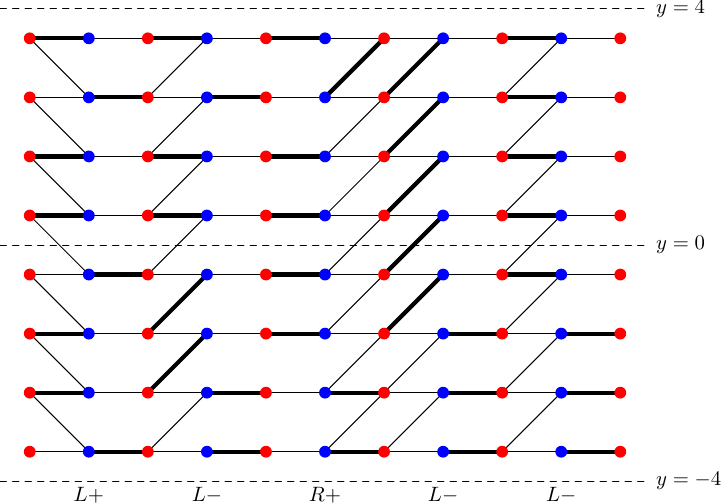}
\caption{A dimer covering on a rail yard graph with left boundary condition given by $(4,2,0)$ and right boundary condition given by the empty partition. Here $N=N_{L,-}=3$. Assume that above $y=4$, the dimer configuration at each odd vertex (represented by red points) corresponds to a hole; below $y=-4$, the dimer configuration at each odd vertex corresponds to a particle.}\label{fig:ryy}
\end{figure}
 
 This distribution in the limit as $N_{L,-}$ goes to infinity converges to a uniform
 measure on the whole interval $[0,M]$. By~\cite[Example~1.3.7]{IGM15}, we have
 \begin{equation*}
 s_{\lambda^{(l)}}\left(\underline{x}^{(L,-)}\right)=\prod_{i,j\in(L,-):i<j}\frac{x_i^M-x_j^M}{x_i-x_j}
 \end{equation*}
 when all the $x_i$'s are distinct, and extended by continuity when some $x_i$'s
 are equal.

 \begin{lemma}\label{l22}
 Assume (\ref{c151}) holds. Let $\rho_{\epsilon}^t$ be the probability measure for the partitions corresponding to dimer configurations adjacent to the column of odd vertices labeled by $2t-1$ in $RYG(l^{(\epsilon)},r^{(\epsilon)},\underline{a}^{(\epsilon)},\underline{b}^{(\epsilon)})$, conditional on the left and right boundary condition $\lambda^{(l)}$ and $\emptyset$, respectively.
 Let $k$ be a positive integer and let
 \begin{eqnarray*}
 K\subset\{i\in[t+1..r^{(\epsilon)}]:a_i^{(\epsilon)}=L,b_i^{(\epsilon)}=-\}\ \mathrm{s.t.}\ |K|=k.
 \end{eqnarray*}
 and let
 \begin{eqnarray*}
 &&\underline{u}^K=\{u_i,i\in K\};\\
 &&\underline{x}^{(L,-,>t)\setminus K}=
 \{x_i:i\in [t+1..r^{(\epsilon)}]\setminus K, a_i^{(\epsilon)}=L,b_i^{(\epsilon)}=-\}.
 \end{eqnarray*}
 Assume that the finite set $K$ is independent of $\epsilon$.
 Assume $t\in\left[v_{p_t-1}^{(\epsilon)},v_{p_t}^{(\epsilon)}\right]$ such that
 \begin{eqnarray}
\lim_{\epsilon\rightarrow 0} \frac{t-v_{p_t-1}^{(\epsilon)}}{v_{p_t}^{(\epsilon)}-v_{p_t-1}^{(\epsilon)}}=\alpha_t\label{lmt}
 \end{eqnarray}
Under Assumptions \ref{ap14}, \ref{ap41}, \ref{ap110} we have
 \begin{eqnarray*}
\lim_{\epsilon\rightarrow 0} \frac{1}{|r^{(\epsilon)}-l^{(\epsilon)}|}\log \mathcal{S}_{\rho^t_{\epsilon},\underline{x}^{(L,-,>t)}}\left(\underline{u}^K,\underline{x}^{(L,-,>t)\setminus K}\right)=\sum_{i\in K}\left[Q_{p_t,\alpha_t}(u_i)-Q_{p_t,\alpha_t}(x_i)\right]
 \end{eqnarray*}
 where
 \begin{small}
 \begin{eqnarray}\label{dqp}
&& Q_{p_t,\alpha_t}(u)=\sum_{p=1}^m\sum_{s=1}^{n_p}\frac{(V_p-V_{p-1})\zeta_{L,-,s,p}}{V_m-V_0}\log\left(\frac{u^M-[x_s^{(p)}]^M}{u-x_s^{(p)}}\right)\\
&&+\sum_{p=1}^{p_t-1}\sum_{s=1}^{n_p}\frac{(V_p-V_{p-1})\zeta_{R,+,s,p}}{V_m-V_0}\log(1+ux_{s}^{(p)})+\sum_{s=1}^{n_{p_t}}\frac{\alpha_t(V_{p_t}-V_{{p_t}-1})\zeta_{R,+,s,p_t}}{V_m-V_0}\log(1+ux_{s}^{(p_t)})\notag\\
&&-\sum_{p=1}^{p_t-1}\sum_{s=1}^{n_p}\frac{(V_p-V_{p-1})\zeta_{L,+,s,p}}{V_m-V_0}\log(1-ux_{s}^{(p)})-\sum_{s=1}^{n_{p_t}}\frac{\alpha_t(V_{p_t}-V_{{p_t}-1})\zeta_{L,+,s,p_t}}{V_m-V_0}\log(1-ux_{s}^{(p_t)}).\notag
 \end{eqnarray}
 \end{small}
 Moreover, the convergence is uniform when each $u_i$ is in a small complex neighborhood of $x_i$ for $i\in K$.
 \end{lemma}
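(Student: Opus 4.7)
The plan is to start from the explicit expression for the Schur generating function given by Lemma \ref{lm212}, substitute the closed-form product formula for $s_{\lambda^{(l)}}\bigl(\underline{x}^{(L,-)}\bigr)$ provided by Assumption \ref{ap110}, and then analyze the resulting ratio term by term using the periodicity in Assumption \ref{ap41}. Writing $y_i=u_i$ if $i\in K$ and $y_i=x_i$ otherwise, Lemma \ref{lm212} evaluated at the point $(\underline u^K,\underline x^{(L,-,>t)\setminus K})$ gives
\begin{equation*}
\mathcal{S}_{\rho^t_\epsilon,\underline{x}^{(L,-,>t)}}\bigl(\underline u^K,\underline x^{(L,-,>t)\setminus K}\bigr)
=\frac{\prod_{i<j,\,i,j\in(L,-)}\frac{y_i^M-y_j^M}{y_i-y_j}}{\prod_{i<j,\,i,j\in(L,-)}\frac{x_i^M-x_j^M}{x_i-x_j}}\cdot\prod_{\substack{i\in[l..t],\,b_i=+\\ j\in[t+1..r^{(\epsilon)}],\,a_j=L,\,b_j=-}}\frac{\xi_{ij}}{z_{ij}},
\end{equation*}
where in the rightmost product the ratio $\xi_{ij}/z_{ij}$ equals $1$ whenever $j\notin K$ since then $w_j=x_j$.

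In the ratio of Schur-function products, every pair $(i,j)$ with neither index in $K$ cancels exactly. The remaining factors split into two groups: pairs with exactly one index in $K$ (which will dominate), and pairs with both indices in $K$ (of which there are at most $\binom{|K|}{2}$, a constant in $\epsilon$). The log of the second group is $O(1)$ uniformly on compact subsets where no $u_i-x_j$ vanishes, hence contributes $0$ after dividing by $|r^{(\epsilon)}-l^{(\epsilon)}|$. For the first group, I would fix $i\in K$ and examine the sum
\begin{equation*}
\sum_{j\in(L,-),\,j\neq i}\Bigl[\log\tfrac{u_i^M-x_j^M}{u_i-x_j}-\log\tfrac{x_i^M-x_j^M}{x_i-x_j}\Bigr];
\end{equation*}
this is, up to one removable term at $j=i$, a sum over the full set $\{j\in[l..r^{(\epsilon)}]:a_j=L,b_j=-\}$. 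Using Assumption \ref{ap41}, for each period $p$ and residue class $s\in[n_p]$ the number of such indices is asymptotic to $\zeta_{L,-,s,p}\bigl(v_p^{(\epsilon)}-v_{p-1}^{(\epsilon)}\bigr)$, so after dividing by $|r^{(\epsilon)}-l^{(\epsilon)}|$ the sum converges to the first (double) sum appearing in $Q_{p_t,\alpha_t}(u_i)-Q_{p_t,\alpha_t}(x_i)$. The excluded $j=i$ term is bounded uniformly when $u_i$ stays in a small neighborhood of $x_i$ away from $x_i$ itself, and contributes $0$ after the rescaling.

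For the product $\prod \xi_{ij}/z_{ij}$, only the factors with $j\in K$ survive. Fixing $j\in K$ and $a_j=L$, the inner sum splits according to whether $a_i=R$ (case $a_i\ne a_j$, giving $\log(1+x_iu_j)-\log(1+x_ix_j)$) or $a_i=L$ (case $a_i=a_j$, giving $-\log(1-x_iu_j)+\log(1-x_ix_j)$). The range $i\in[l..t]$ with $b_i=+$ splits into the complete periods $p=1,\ldots,p_t-1$ plus a partial period occupying fraction $\alpha_t$ of period $p_t$ (by (\ref{lmt})). Applying Assumption \ref{ap41} again, each complete period contributes $\frac{(V_p-V_{p-1})\zeta_{\mathbf a,+,s,p}}{V_m-V_0}$ times the corresponding log factor, and the partial period contributes $\alpha_t$ times the analogous quantity for $p=p_t$. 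This exactly reproduces the second and third lines of the defining formula (\ref{dqp}) for $Q_{p_t,\alpha_t}(u_j)-Q_{p_t,\alpha_t}(x_j)$. Summing the contributions over $j\in K$ and combining with the Schur-function contribution yields the claimed limit.

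The main technical point, rather than the combinatorial bookkeeping, is justifying uniform convergence of the Riemann sums in a complex neighborhood of $(x_i)_{i\in K}$. This amounts to checking that each summand $\log\frac{u^M-x_s^{(p)M}}{u-x_s^{(p)}}$, $\log(1\pm u x_s^{(p)})$ is analytic and uniformly bounded on a polydisk around each $x_i$ that avoids the zeros of $u-x_s^{(p)}$ and of $1\pm u x_s^{(p)}$; Assumption \ref{ap14} guarantees that the $1-ux_s^{(p)}$ factors stay away from zero for $u$ close to any $x_i$ of type $(L,+)$. Once the summands are uniformly bounded and analytic, the Riemann-sum convergence is uniform on compact subsets of the polydisk, giving the uniform convergence asserted at the end of the lemma.
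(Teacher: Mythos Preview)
Your proposal is correct and follows essentially the same route as the paper: start from Lemma~\ref{lm212}, plug in the staircase product formula for $s_{\lambda^{(l)}}$, cancel all factors not involving an index in $K$, observe the $\binom{|K|}{2}$ pair terms are $O(1)$, and then pass to the limit via the periodicity densities of Assumption~\ref{ap41}. The only slip is cosmetic: the indices $i\in K$ are of type $(L,-)$, not $(L,+)$, so your appeal to Assumption~\ref{ap14} should read that $x_ix_s^{(p)}<1$ for $i$ of type $(L,-)$ and $s$ of type $(L,+)$.
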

  
\begin{proof}Let
 \begin{eqnarray*}
 \underline{x}^{(L,-)\setminus K}=
 \{x_i:i\in [l..r]\setminus K, a_i=L,b_i=-\}.
 \end{eqnarray*}

By Lemma \ref{lm212}, we have
\begin{eqnarray*}
&&\frac{1}{|r^{(\epsilon)}-l^{(\epsilon)}|}
\log \mathcal{S}_{\rho^t,\underline{x}^{(L,-,>t)}}\left(\underline{u}^K,\underline{x}^{(L,-,>t)\setminus K}\right)\\
&=&\frac{1}{|r^{(\epsilon)}-l^{(\epsilon)}|}
\log \left(\frac{s_{\lambda^{(l)}}\left(\underline{u}^K,\underline{x}^{(L,-)\setminus K}\right)}{s_{\lambda^{(l)}}(\underline{x}^{(L,-)})}
\prod_{i\in[l..t],b_i=+;}
\prod_{j\in[t+1..r],a_j=L,b_j=-}\frac{\psi_{ij}}{z_{ij}}\right).
\end{eqnarray*}
where 
\begin{eqnarray*}
\psi_{ij}=\begin{cases}1+\psi_i\psi_j\ &\mathrm{if}\ a_i\neq a_j.\\\frac{1}{1-\psi_i\psi_j}\ &\mathrm{if}\ a_i=a_j.\end{cases}
\end{eqnarray*}
and for $i\in[l..r]$,
\begin{eqnarray*}
\psi_i=\begin{cases}u_i, &\mathrm{if}\ i\in K,\\ x_i, &\mathrm{otherwise}.\end{cases}
\end{eqnarray*}
Under Assumption \ref{ap14}, \ref{ap110} we obtain
\begin{small}
\begin{eqnarray*}
&&\frac{1}{|r^{(\epsilon)}-l^{(\epsilon)}|}
\log \mathcal{S}_{\rho^t,\underline{x}^{(L,-,>t)}}\left(\underline{u}^K,\underline{x}^{(L,-,>t)\setminus K}\right)\\
&=&\frac{1}{|r^{(\epsilon)}-l^{(\epsilon)}|}
\left[\log \left(\prod_{i,j\in K,i<j}\frac{u_i^M-u_j^M}{x_i^M-x_j^M}\right)
+\log \left(\prod_{i,j\in K,i<j}\frac{x_i-x_j}{u_i-u_j}\right)\right.\\
%&&\log \left(\prod_{i\in K,j\in (L,-)\setminus K}\frac{u_i^M-x_j^M}{x_i^M-x_j^M}\right)+
%\log \left(\prod_{i\in K,j\in (L,-)\setminus K}\frac{x_i-x_j}{u_i-x_j}\right)
%\\
&&+\left.\log \left(\prod_{i\in K,j\in (L,-)\setminus K}\frac{u_i^M-x_j^M}{x_i^M-x_j^M}\right)+
\log \left(\prod_{i\in K,j\in (L,-)\setminus K}\frac{x_i-x_j}{u_i-x_j}\right)
\right.\\
&&
\left.+\log\left(
\prod_{i\in[l..t],b_i=+;}
\prod_{j\in K,a_j\neq a_i}\frac{1+x_iu_j}{1+x_ix_j}\right)
+\log\left(
\prod_{i\in[l..t],b_i=+;}
\prod_{j\in K,a_j= a_i}\frac{1-x_ix_j}{1-x_iu_j}\right)\right]
.
\end{eqnarray*}
\end{small}
Under Assumption \ref{ap41} we obtain
\begin{small}
\begin{eqnarray*}
&&\lim_{\epsilon\rightarrow 0}\frac{1}{|r^{(\epsilon)}-l^{(\epsilon)}|}
\log \mathcal{S}_{\rho^t,\underline{x}^{(L,-,>t)}}\left(\underline{u}^K,\underline{x}^{(L,-,>t)\setminus K}\right)\\
&=&\sum_{i\in K}\sum_{p=1}^m\sum_{s=1}^{n_p}\frac{(V_p-V_{p-1})\zeta_{L,-,s,p}}{V_m-V_0}
\left[\log\left(\frac{u_i^M-[x_s^{(p)}]^M}{u_i-x_s^{(p)}}\right)-\log\left(\frac{x_i^M-[x_s^{(p)}]^M}{x_i-x_s^{(p)}}\right)\right]\\
&&+\sum_{i\in K}\sum_{p=1}^{p_t-1}\sum_{s=1}^{n_p}\frac{(V_p-V_{p-1})\zeta_{R,+,s,p}}{V_m-V_0}\left(\log(1+u_ix_{s}^{(p)})-\log(1+x_ix_{s}^{(p)})\right)\\
&&+\sum_{i\in K}\sum_{s=1}^{n_{p_t}}\frac{\alpha_t(V_{p_t}-V_{{p_t}-1})\zeta_{R,+,s,p_t}}{V_m-V_0}\left(\log(1+u_ix_{s}^{(p_t)})-\log(1+x_ix_{s}^{(p_t)})\right)\\
&&+\sum_{i\in K}\sum_{p=1}^{p_t-1}\sum_{s=1}^{n_p}\frac{(V_p-V_{p-1})\zeta_{L,+,s,p}}{V_m-V_0}\left(\log(1-x_ix_{s}^{(p)})-\log(1-u_ix_{s}^{(p)})\right)\\
&&+\sum_{i\in K}\sum_{s=1}^{n_{p_t}}\frac{\alpha_t(V_{p_t}-V_{{p_t}-1})\zeta_{L,+,s,p_t}}{V_m-V_0}\left(\log(1-x_ix_{s}^{(p_t)})-\log(1-u_ix_{s}^{(p_t)})\right)
\end{eqnarray*}
\end{small}
\end{proof}

\subsection{Limit Shape}

Let $k\geq 1$ be a positive integer. Let $t\in\left[v_{p_t-1}^{(\epsilon)},v_{p_t}^{(\epsilon)}\right]$ satisfying (\ref{lmt}). Let $\rho^t_{\epsilon}$ be the probability measure for the partitions corresponding to dimer configurations adjacent to the column of odd vertices labeled by $2t-1$ in $RYG(l^{(\epsilon)},r^{(\epsilon)},\underline{a}^{(\epsilon)},\underline{b}^{(\epsilon)})$, conditional on the left and right boundary condition $\lambda^{(l)}$ and $\emptyset$, respectively.
Let $\mathbf{m}_{\rho^t_{\epsilon}}$ be the random counting measure on $\RR$ regarding a random partition $\lambda\in \YY_N$ with distribution $\rho_{\epsilon}^t$.
We shall compute the moments of the measure $\mathbf{m}_{\rho^t_{\epsilon}}$ via (\ref{emt}) by applying the differential operator to the Schur generating function.

For simplicity, write
\begin{align}
&N^{(\epsilon)}=r^{(\epsilon)}-l^{(\epsilon)}.\label{dne}\\
&N^{(L,-,>t)}=
\left| \{i\in [t+1..r^{(\epsilon)}]: a_i^{(\epsilon)}=L,b_i^{(\epsilon)}=-\}\right|.\notag
\end{align}
Let $K$ be a finite set independent of $\epsilon$ as defined in Lemma \ref{l22}. Define
\begin{eqnarray*}
 \underline{u}^{(L,-,>t)}=
 \{u_i:i\in [t+1..r^{(\epsilon)}], a_i^{(\epsilon)}=L,b_i^{(\epsilon)}=-\}.
 \end{eqnarray*}

By Lemma \ref{l22}, we may write
\begin{align}
&\mathcal{S}_{\rho^t_{\epsilon},\underline{x}^{(L,-,>t)}}\left(\underline{u}^{(L,-,>t)}\right)
=e^{N^{(\epsilon)}\left(\sum_{i\in (L,-,>t)}\left[Q_{p_t,\alpha_t}(u_i)-Q_{p_t,\alpha_t}(x_i)\right]\right)}T_{N^{(\epsilon)}}\left(\underline{u}^{(L,-,>t)}\right)\label{rst}
\end{align}
such that
\begin{eqnarray}
\lim_{\epsilon\rightarrow 0}\frac{1}{N^{(\epsilon)}}\log  T_{N^{(\epsilon)}}\left(\underline{u}^K,\underline{x}^{(L,-,>t)\setminus K}\right)=0.\label{llt}
\end{eqnarray}
and
\begin{eqnarray}
T_{N^{(\epsilon)}}\left(\underline{x}^{(L,-,>t)}\right)=1;\label{tnx}
\end{eqnarray}
and the convergence is uniform when each $u_i$ is in a small complex neighborhood of $x_i$ for $i\in K$.

Then by (\ref{emt}) (\ref{rst}),
\begin{small}
\begin{eqnarray*}
&&\EE\left(\int_{\RR}x^k\mathbf{m}_{\rho^t_{\epsilon}}(dx)\right)^m=
\frac{1}{[N^{(L,-,>t)}]^{m(k+1)}}
    (\mathcal{D}_k)^m\mathcal{S}_{\rho^t_{\epsilon},\underline{x}^{(L,-,>t)}}\left(\underline{u}^{(L,-,>t)}\right)|_{\underline{u}^{(L,-,>t)}=\underline{x}^{(L,-,>t)}}
    \\
   &=&\left(\frac{N^{(\epsilon)}}{N^{(L,-,>t)}}\right)^{m(k+1)}\frac{1}{[N^{(\epsilon)}]^{m(k+1)}}
  \left.\left[T_{N^{(\epsilon)}}\left(\underline{u}^{(L,-,>t)}\right)(\mathcal{D}_k)^me^{N^{(\epsilon)}\left(\sum_{i\in (L,-,>t)}\left[Q_{p_t,\alpha_t}(u_i)-Q_{p_t,\alpha_t}(x_i)\right]\right)}\right|_{\underline{u}^{(L,-,>t)}=\underline{x}^{(L,-,>t)}}\right.\\
   &&+\left.R\right]
\end{eqnarray*}
\end{small}
where $R$ is the terms in $(\mathcal{D}_k)^m\mathcal{S}_{\rho^t_{\epsilon},\underline{x}^{(L,-,>t)}}\left(\underline{u}^{(L,-,>t)}\right)|_{\underline{u}^{(L,-,>t)}=\underline{x}^{(L,-,>t)}}$ obtained when the differential operator $(\mathcal{D}_k)^m$ acts on $T_{N^{(\epsilon)}}\left(\underline{u}^{(L,-,>t)}\right)$ as well.
From (\ref{llt}) we see that the leading term of $\EE\int_{\RR}x^k\mathbf{m}_{\rho^t_{\epsilon}}(dx)$ as $\epsilon\rightarrow 0$ is the same as that of 
\begin{small}
\begin{eqnarray}
&&\frac{1}{[N^{(\epsilon)}]^{m(k+1)}}
   T_{N^{(\epsilon)}}\left(\underline{u}^{(L,-,>t)}\right)\mathcal({D}_k)^m\left.e^{N^{(\epsilon)}\left(\sum_{i\in (L,-,>t)}\left[Q_{p_t,\alpha_t}(u_i)-Q_{p_t,\alpha_t}(x_i)\right]\right)}\right|_{\underline{u}^{(L,-,>t)}=\underline{x}^{(L,-,>t)}}\label{ltm1}\\
  &=&\left. \frac{1}{[N^{(\epsilon)}]^{m(k+1)}}
   \mathcal({D}_k)^m e^{N^{(\epsilon)}\left(\sum_{i\in (L,-,>t)}\left[Q_{p_t,\alpha_t}(u_i)-Q_{p_t,\alpha_t}(x_i)\right]\right)}\right|_{\underline{u}^{(L,-,>t)}=\underline{x}^{(L,-,>t)}}\notag
\end{eqnarray}
\end{small}
where the last identity follows from (\ref{tnx}).

When $m=1$, (\ref{ltm1}) can be computed as follows 
\begin{eqnarray*}
&&\frac{1}{[N^{(\epsilon)}]^{k+1}}
   \frac{1}{\prod_{i,j\in(L,-,>t):i<j}(u_i-u_j)}
   \sum_{s\in (L,-,>t)}\left(u_s\frac{\partial}{\partial u_s}\right)^k\\
   &&\left.\left[e^{N^{(\epsilon)}\left(\sum_{i\in (L,-,>t)}\left[Q_{p_t,\alpha_t}(u_i)-Q_{p_t,\alpha_t}(x_i)\right]\right)}\prod_{i,j\in(L,-,>t):i<j}(u_i-u_j)\right]\right|_{\underline{u}^{(L,-,>t)}=\underline{x}^{(L,-,>t)}}
\end{eqnarray*}
whose leading term is the same as that of 
\begin{small}
\begin{equation*}
\mathcal{M}_{k,\epsilon,t}:=
\lim_{\underline{u}^{(L,-,>t)}\rightarrow\underline{x}^{(L,-,>t)}}\sum_{s\in(L,-,>t)}\sum_{g=0}^{k}\left[N^{(\epsilon)}\right]^{-g-1}
%\left(\begin{array}{c}p\\l\end{array}\right)
\binom{k}{g}
u_s^k[Q'_{p_t,\alpha_t}(u_s)]^{k-g}\left(\sum_{j\in
  (L,-,>t)\setminus\{s\}}\frac{1}{u_s-u_j}\right)^{g}.
\end{equation*}
\end{small}
 First we assume that the edge weights $\left\{x_i^{(p)}:p\in[m], i\in [n_p]\right\}$ are pairwise distinct. For each $s\in (L,-,>t)$, let
\begin{equation*}
S_{\epsilon}(s)=\{j\in (L,-,>t): x_j=x_{s}\}.
\end{equation*}
Then
%\begin{eqnarray*}
  \begin{multline*}
\mathcal{M}_{k,\epsilon,t}=\lim_{\underline{u}^{(L,-,>t)}\rightarrow\underline{x}^{(L,-,>t)}}\sum_{s\in(L,-,>t)}\sum_{g=0}^{k}\left[N^{(\epsilon)}\right]^{-g-1}\left(\begin{array}{c}k\\g\end{array}\right)u_s^k[Q'_{p_t,\alpha_t}(u_s)]^{k-g}\\
\times\left[\sum_{q=0}^{l}\left(\begin{array}{c}g\\q\end{array}\right)\left(\sum_{j\in(L,-,>t)\setminus
S_{\epsilon}(s)}\frac{1}{u_s-u_j}\right)^{g-q}\left(\sum_{j\in
S_{\epsilon}(s)\setminus\{s\}}\frac{1}{u_s-u_j}\right)^{q}\right],
\end{multline*}
By Assumption \ref{ap41}, we obtain
\begin{eqnarray*}
&&\mathcal{M}_{k,\epsilon,t}%=\lim_{\underline{u}^{(L,-,>t)}\rightarrow\underline{x}^{(L,-,>t)}}\left[N^{(\epsilon)}\right]^{-k-1}\sum_{p\in[m]}\sum_{i\in [n_p]}\sum_{s\in(L,-,>t):x_s=x_i^{(p)}}\sum_{g=0}^{k}\left(\begin{array}{c}k\\g\end{array}\right)u_s^k[N^{(\epsilon)}Q'_{p_t,\alpha_t}(u_s)]^{k-g}\\
%&&\times\left[\sum_{q=0}^{g}\left(\begin{array}{c}g\\q\end{array}\right)\left(\sum_{d\in [m],d\neq p}\sum_{j\in [n_d]}\frac{v_d^{(\epsilon)}-v_{d-1}^{(\epsilon)}}{n_d}\frac{1}{u_s-x_j^{(d)}}\right)^{g-q}\left(\sum_{j\in
%S_{\epsilon}(s)\setminus\{s\}}\frac{1}{u_s-u_j}\right)^{q}\right]\\
=\lim_{\underline{u}^{(L,-,>t)}\rightarrow\underline{x}^{(L,-,>t)}}\left[N^{(\epsilon)}\right]^{-1}\sum_{p\in[m]}\sum_{i\in [n_p]}\sum_{s\in(L,-,>t):x_s=x_i^{(p)}}\sum_{g=0}^{k}\left(\begin{array}{c}k\\g\end{array}\right)u_s^k[Q'_{p_t,\alpha_t}(u_s)]^{k-g}\\
&&\times\left[\sum_{q=0}^{g}\left(\begin{array}{c}g\\q\end{array}\right)\left[A_{p,i}(u_s)
\right]^{g-q}\left(\frac{1}{N^{(\epsilon)}}\sum_{j\in
S_{\epsilon}(s)\setminus\{s\}}\frac{1}{u_s-u_j}\right)^{q}\right]
\end{eqnarray*}
where for each $p\in[m]$ and $i\in[n_p]$, we have
\begin{enumerate}
\item If $p\neq p_t$, i.e. if $p\in [p_t+1..m]$,
\begin{small}
\begin{eqnarray*}
A_{p,i}(u):&=&\sum_{d\in [p_t+1..m],d\neq p}\sum_{j\in [n_d],a_{j,d}=L,b_{j,d}=-}\frac{V_d-V_{d-1}}{(V_m-V_0)n_d}\frac{1}{u-x_j^{(d)}}\\
&&+
\sum_{j\in [n_{p}],j\neq i,a_{j,{p}}=L,b_{j,{p}}=-}\frac{V_p-V_{p-1}}{(V_m-V_0)n_p}\frac{1}{u-x_j^{(p)}}\\
&&+
\sum_{j\in [n_{p_t}],a_{j,{p_t}}=L,b_{j,{p_t}}=-}\frac{(V_{p_t}-V_{p_t-1})(1-\alpha_t)}{(V_m-V_0)n_{p_t}}\frac{1}{u-x_j^{(p_t)}}.
\end{eqnarray*}
\end{small}
\item If $p=p_t$,
\begin{eqnarray*}
A_{p,i}(u):&=&\sum_{d\in [p_t+1..m]}\sum_{j\in [n_d],a_{j,d}=L,b_{j,d}=-}\frac{V_d-V_{d-1}}{(V_m-V_0)n_d}\frac{1}{u-x_j^{(d)}}\\
&&+
\sum_{j\in [n_{p_t}],j\neq i,a_{j,{p_t}}=L,b_{j,{p_t}}=-}\frac{(V_{p_t}-V_{p_t-1})(1-\alpha_t)}{(V_m-V_0)n_{p_t}}\frac{1}{u-x_j^{(p_t)}}.
\end{eqnarray*}
\end{enumerate}
Hence we have
\begin{eqnarray*}
&&\mathcal{M}_{k,\epsilon,t}%=\lim_{\underline{u}^{(L,-,>t)}\rightarrow\underline{x}^{(L,-,>t)}}\left[N^{(\epsilon)}\right]^{-k-1}\sum_{p\in[m]}\sum_{i\in [n_p]}\sum_{s\in(L,-,>t):x_s=x_i^{(p)}}\sum_{g=0}^{k}\left(\begin{array}{c}k\\g\end{array}\right)u_s^k[N^{(\epsilon)}Q'_{p_t,\alpha_t}(u_s)]^{k-g}\\
%&&\times\left[\sum_{q=0}^{g}\left(\begin{array}{c}g\\q\end{array}\right)\left(\sum_{d\in [m],d\neq p}\sum_{j\in [n_d]}\frac{v_d^{(\epsilon)}-v_{d-1}^{(\epsilon)}}{n_d}\frac{1}{u_s-x_j^{(d)}}\right)^{g-q}\left(\sum_{j\in
%S_{\epsilon}(s)\setminus\{s\}}\frac{1}{u_s-u_j}\right)^{q}\right]\\
=\lim_{\underline{u}^{(L,-,>t)}\rightarrow\underline{x}^{(L,-,>t)}}\left[N^{(\epsilon)}\right]^{-1}\sum_{p\in[m]}\sum_{i\in [n_p]}\sum_{s\in(L,-,>t):x_s=x_i^{(p)}}\sum_{q=0}^{k}{k\choose q}\\&&\left(\frac{1}{N^{(\epsilon)}}\sum_{j\in
S_{\epsilon}(s)\setminus\{s\}}\frac{1}{u_s-u_j}\right)^{q}
u_s^k[Q'_{p_t,\alpha_t}(u_s)+A_{p,i}(u_s)]^{k-q}
\end{eqnarray*}

By Lemma \ref{la1}, we obtain
\begin{eqnarray*}
\mathcal{M}_{k,\epsilon,t}&=&\lim_{\underline{u}^{(L,-,>t)}\rightarrow\underline{x}^{(L,-,>t)}}\sum_{p\in[p_t+1..m]}\sum_{i\in[n_p]:a_{i,p}=L,b_{i,p}=-}\sum_{q=0}^{k}{k\choose q}\frac{1}{(q+1)!}\left[\frac{(V_p-V_{p-1})\alpha_t}{(V_m-V_0)n_p}\right]^{q+1}\\
&&\times
\left.
\frac{
\partial^q
}{
\partial u^q
}
\left[
  u^k\left(Q_{p_t,\alpha_t}'(u)+
  A_{p,i}(u)\right)^{k-q}
\right]
\right|_{u=x_i^{(p)}}\\
&&+\lim_{\underline{u}^{(L,-,>t)}\rightarrow\underline{x}^{(L,-,>t)}}\sum_{i\in[n_{p_t}]:a_{i,{p_t}}=L,b_{i,p_t}=-}\sum_{q=0}^{k}{k\choose q}\frac{1}{(q+1)!}\left[\frac{(V_{p_t}-V_{p_t-1})\alpha_t}{(V_m-V_0)n_{p_t}}\right]^{q+1}\\
&&\times
\left.
\frac{
\partial^q
}{
\partial u^q
}
\left[
  u^k\left(Q_{p_t,\alpha_t}'(u)+
  A_{p,i}(u)\right)^{k-q}
\right]
\right|_{u=x_i^{(p_t)}}
\end{eqnarray*}

%\cb{When applying the lemma, I found that $(k+1)!$ in the denominator should be
%  replaced by $k!$. I am obviously missing something, because the $(k+1)!$
%  appears in~\cite{bg} too. I should have a closer look at their proof to
%understand where the extra $(k+1)$ factor comes from in the leading term}

Note that by (\ref{dqp}), we obtain
\begin{small}
\begin{eqnarray}
&& Q_{p_t,\alpha_t}'(u)=\sum_{p=1}^m\sum_{s=1}^{n_p}\frac{(V_p-V_{p-1})\zeta_{L,-,s,p}}{V_m-V_0}\left(\frac{Mu^{M-1}}{u^M-[x_s^{(p)}]^M}-\frac{1}{u-x_s^{(p)}}\right)\label{dqp1}\\
&&+\sum_{p=1}^{p_t-1}\sum_{s=1}^{n_p}\frac{(V_p-V_{p-1})\zeta_{R,+,s,p}}{V_m-V_0}\frac{x_s^{(p)}}{1+ux_{s}^{(p)}}+\sum_{s=1}^{n_{p_t}}\frac{\alpha_t(V_{p_t}-V_{{p_t}-1})\zeta_{R,+,s,p_t}}{V_m-V_0}\frac{x_s^{(p_t)}}{1+u_ix_{s}^{(p_t)}}\notag\\
&&+\sum_{p=1}^{p_t-1}\sum_{s=1}^{n_p}\frac{(V_p-V_{p-1})\zeta_{L,+,s,p}}{V_m-V_0}\frac{x_s^{(p)}}{1-ux_{s}^{(p)}}+\sum_{s=1}^{n_{p_t}}\frac{\alpha_t(V_{p_t}-V_{{p_t}-1})\zeta_{L,+,s,p_t}}{V_m-V_0}\frac{x_s^{(p_t)}}{1-ux_{s}^{(p_t)}}.\notag
\end{eqnarray}
\end{small}
Let
\begin{small}
\begin{eqnarray}
W_{p_t,\alpha_t}(u):&=&\sum_{d\in [p_t+1..m]}\sum_{j\in [n_d],a_{j,d}=L,b_{j,d}=-}\frac{V_d-V_{d-1}}{(V_m-V_0)n_d}\frac{1}{u-x_j^{(d)}}\label{dwu}\\
&&+
\sum_{j\in [n_{p_t}],a_{j,{p_t}}=L,b_{j,{p_t}}=-}\frac{(V_{p_t}-V_{p_t-1})(1-\alpha_t)}{(V_m-V_0)n_{p_t}}\frac{1}{u-x_j^{(p_t)}}.\notag
\end{eqnarray}
\end{small}
i.e. $W_{p_t,\alpha_t}(u)$ is exactly $A_{p,i}(u)$ plus the corresponding term for $(p,i)$.

It is straightforward to check the following lemma:
\begin{lemma}\label{l333}
\begin{align*}
\lim_{\epsilon\rightarrow 0}\frac{N^{(L,-,>t)}}{N^{(\epsilon)}}=\phi(x)
\end{align*}
where $\phi$ is a continuous, piecewise linear function in $x\in[0,1]$ with  
\begin{itemize}
\item $\phi(1)=0$; and
\item $x=x(p_t,\alpha_t):=\frac{V_{p_t-1}-V_0+\alpha_t(V_{p_t}-V_{p_t-1})}{V_m-V_0}$; and
\item the slope of $\phi$ in $(V_{p-1},V_p)$ is the negative of the quotient of total number of even vertices with label $(L,-)$ divided by the total number of even vertices in $(V_{p-1},V_p)$.
\end{itemize}
\end{lemma}
Using residue and following similar computations, we have
\begin{equation*}
\lim_{\epsilon\rightarrow 0}\mathbb{E}\int_{\RR}x^{k}\textbf{m}_{\rho_{\epsilon}^t}(dx)=
%=\sum_{i=1}^{n}\frac{1}{2(p+1)\pi \mathbf{i}}\oint_{x_{t+i}}\frac{dz}{z}\left(zQ_{\kappa}'(z)+\sum_{j=1}^{n}\frac{z}{n(z-x_{j})}\right)^{p+1}
\frac{1}{2(k+1)[\phi(x(p_t,\alpha_t))]^{k+1}\pi \mathbf{i}}\oint_{C_{p_t,L,-}}\frac{dz}{z}\left[z\left(Q_{p_t,\alpha_t}'(z)+W_{p_t,\alpha_t}(z)\right)\right]^{k+1},
\end{equation*}
where $C_{p_t,L,-}$ is a simple, closed, positively oriented, contour containing only the poles $\{x_i^{(p)}\}_{p\in[p_t..m],i\in[n_p]:a_{i,p}=L,b_{i,p}=-}$ of the integrand, and no other singularities.

In the case that some of the edge weights $\{x_i^{(p)}\}_{p\in[p_t..m],i\in[n_p]:a_{i,p}=L,b_{i,p}=-}$ may be equal, it is straightforward to check that  the expression for the limit of the moment of
$\mathbf{m}_{\rho_{\epsilon}^t}$ as $\epsilon\rightarrow 0$ is the same.

Now let $m=2$ in (\ref{ltm1}), we obtain as $\epsilon\rightarrow 0$
\begin{eqnarray*}
\EE\left(\int_{\RR}x^k\mathbf{m}_{\rho^t_{\epsilon}}(dx)\right)^2\approx\left. \frac{1}{[N^{(L,-,>t)}]^{2(k+1)}}
   \mathcal({D}_k)^2e^{N^{(\epsilon)}\left(\sum_{i\in (L,-,>t)}\left[Q_{p_t,\alpha_t}(u_i)-Q_{p_t,\alpha_t}(x_i)\right]\right)}\right|_{\underline{u}^{(L,-,>t)}=\underline{x}^{(L,-,>t)}}
\end{eqnarray*}
Explicit computations show that 
$\EE\left(\int_{\RR}x^k\mathbf{m}_{\rho^t_{\epsilon}}(dx)\right)^2$ and $\left(\EE\int_{\RR}x^k\mathbf{m}_{\rho^t_{\epsilon}}(dx)\right)^2$ have the same leading term as $\epsilon\rightarrow 0$. Hence we have

\begin{theorem}\label{t33}Let $k\geq 1$ be a positive integer.
Let $\rho^t_{\epsilon}$ be the probability measure for the partitions corresponding to dimer configurations adjacent to the column of odd vertices labeled by $2t-1$ in $RYG(l^{(\epsilon)},r^{(\epsilon)},\underline{a}^{(\epsilon)},\underline{b}^{(\epsilon)})$, conditional on the left and right boundary condition $\lambda^{(l)}$ and $\emptyset$, respectively.
Let $\mathbf{m}_{\rho^t_{\epsilon}}$ be the random counting measure on $\RR$ regarding a random partition $\lambda\in \YY_N$ with distribution $\rho_{\epsilon}^t$. Suppose that (\ref{c151}) and Assumptions \ref{ap14}, \ref{ap41}, \ref{ap110} hold.
\begin{eqnarray*}
\lim_{\epsilon\rightarrow 0}\int_{\RR}x^{k}\textbf{m}_{\rho_{\epsilon}^t}(dx)=
%=\sum_{i=1}^{n}\frac{1}{2(p+1)\pi \mathbf{i}}\oint_{x_{t+i}}\frac{dz}{z}\left(zQ_{\kappa}'(z)+\sum_{j=1}^{n}\frac{z}{n(z-x_{j})}\right)^{p+1}
\frac{1}{2(k+1)\pi \mathbf{i}}\oint_{C_{p_t,L,-}}\frac{dz}{z}\left[\frac{z\left(Q_{p_t,\alpha_t}'(z)+W_{p_t,\alpha_t}(z)\right)}{\phi_{p_t,\alpha_t}}\right]^{k+1},
\end{eqnarray*}
where $Q_{p_t,\alpha_t}'(z)$ and $W_{p_t,\alpha_t}(z)$ are defined in (\ref{dqp}) and (\ref{dwu}) respectively; $\phi_{p_t,\alpha_t}=\phi(x(p_t,\alpha_t))$ is defined as in Lemma \ref{l333}.
and $C_{p_t,L,-}$ is a simple, closed, positively oriented, contour containing only the poles $\{x_i^{(p)}\}_{p\in[p_t..m],i\in[n_p]:a_{i,p}=L,b_{i,p}=-}$ of the integrand, and no other singularities.
\end{theorem}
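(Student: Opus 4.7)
The plan is to compute the moments $\EE(\int x^k \mathbf{m}_{\rho^t_\epsilon}(dx))^m$ for $m=1,2$ via the Schur generating function identity (\ref{emt}), using the asymptotic factorization supplied by Lemma \ref{l22}. Writing
\begin{equation*}
\mathcal{S}_{\rho^t_\epsilon,\underline{x}^{(L,-,>t)}}\bigl(\underline{u}^{(L,-,>t)}\bigr) = e^{N^{(\epsilon)}\sum_{i}[Q_{p_t,\alpha_t}(u_i) - Q_{p_t,\alpha_t}(x_i)]}\, T_{N^{(\epsilon)}}\bigl(\underline{u}^{(L,-,>t)}\bigr),
\end{equation*}
where Lemma \ref{l22} guarantees $[N^{(\epsilon)}]^{-1}\log T_{N^{(\epsilon)}} \to 0$ uniformly in a complex neighborhood of $\underline{x}$ and $T_{N^{(\epsilon)}}(\underline{x}) = 1$, the action of $(\mathcal{D}_k)^m$ splits by the Leibniz rule into contributions in which the derivatives hit the exponential factor or the tail $T_{N^{(\epsilon)}}$. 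Since $T$ is sub-exponential in $N^{(\epsilon)}$, after dividing by $[N^{(\epsilon)}]^{m(k+1)}$ only the terms in which every derivative lands on the exponential survive in the limit; this isolates the problem to an explicit analytic computation.

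For the first moment ($m=1$), I would expand $\mathcal{D}_k = V^{-1}(\sum_s u_s \partial_{u_s})^k V$ and note that each $u_s \partial_{u_s}$ acting on the exponential produces a factor $N^{(\epsilon)} u_s Q'_{p_t,\alpha_t}(u_s)$, while acting on the Vandermonde $V$ produces $u_s\sum_{j\ne s}(u_s-u_j)^{-1}$. Collecting terms yields, to leading order in $[N^{(\epsilon)}]^{-(k+1)}$, the expression
\begin{equation*}
[N^{(\epsilon)}]^{-1}\sum_{s \in (L,-,>t)} \sum_{g=0}^k \binom{k}{g} u_s^k [Q'_{p_t,\alpha_t}(u_s)]^{k-g} \Bigl(\sum_{j \ne s} \tfrac{1}{u_s - u_j}\Bigr)^g\Bigm|_{\underline{u}=\underline{x}}.
\end{equation*}
The main subtlety is that for each $s$ the inner sum must be split according to whether $x_j \ne x_s$ (in which case the terms converge by Assumption \ref{ap41} to the finite quantity $A_{p,i}(u_s)$ collected from the densities $\zeta_{L,-,j,d}$) or $x_j = x_s$ (in which case they are singular as $\underline{u}\to\underline{x}$).

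The singular piece is resolved by invoking Lemma \ref{la1}, which converts the $q$-fold singular Riemann-type sum $([N^{(\epsilon)}]^{-1}\sum_{j \in S_\epsilon(s)\setminus\{s\}}(u_s-u_j)^{-1})^q$ into $\frac{1}{(q+1)!}\rho^{q+1}\partial_u^q$ acting on the regular part, with $\rho$ the corresponding local density. Once the diagonal contributions are recombined with the regular ones via the binomial identity, the sum over $(p,i)$ and over $g,q$ collapses into a sum of residues of $z^{-1}[z(Q'_{p_t,\alpha_t}(z) + W_{p_t,\alpha_t}(z))]^{k+1}/(k+1)$ at the points $\{x_i^{(p)}\}_{p\in[p_t..m],\,i\in[n_p]:a_{i,p}=L,b_{i,p}=-}$, which is exactly the contour integral in the statement. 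The case of coincident edge weights follows by continuity.

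Finally, the second-moment computation at $m=2$ proceeds by the same expansion: the only leading contribution arises when each copy of $\mathcal{D}_k$ acts on the exponential factor through a disjoint set of variables, yielding $[\EE \int x^k \mathbf{m}_{\rho^t_\epsilon}(dx)]^2$, so the variance vanishes and convergence in moments follows. The chief obstacle throughout is the delicate singular-limit analysis encapsulated in Lemma \ref{la1}: one must track carefully how coincidences among the $x_j$ interact with the Vandermonde-derivative combinatorics, and verify that lower-order terms (both from derivatives on $T_{N^{(\epsilon)}}$ and from subdominant contributions in the Leibniz expansion) indeed remain subleading under the normalization $[N^{(\epsilon)}]^{-m(k+1)}$.
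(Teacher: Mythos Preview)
Your proposal is correct and follows essentially the same approach as the paper: factor the Schur generating function via Lemma~\ref{l22}, apply $(\mathcal{D}_k)^m$ and isolate the leading contribution from the exponential, split the Vandermonde-derivative sum into ``same weight'' versus ``different weight'' indices (yielding $A_{p,i}$ and the singular part resolved by Lemma~\ref{la1}, respectively), and then recognize the result as a residue sum giving the stated contour integral; the $m=2$ computation confirming vanishing variance is likewise the paper's route.
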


\subsection{Frozen boundary}
\begin{definition}
  \label{df41}
  Let $\mathbf{m}_{p_t,\alpha_t}$ be the limit of $\mathbf{m}_{\rho_{\epsilon}^t}$ as $\epsilon\rightarrow 0$. Let 
  \begin{eqnarray}
  \chi:=V_{p_{t-1}}+\alpha_t(V_{p_t}-V_{p_t-1})\label{dfc}
  \end{eqnarray}
  Let $\mathcal{L}$ be the set of $(\chi,\kappa)$ inside $\mathcal{R}$ such that
  the density $\frac{d\mathbf{m}_{p_t,\alpha_t}\left(\kappa\right)}{d\kappa}$ is not
  equal to 0 or 1. Then $\mathcal{L}$ is called the \emph{liquid region}. Its boundary
  $\partial \mathcal{L}$ is called the \emph{frozen boundary}.
\end{definition}

In order to identify the frozen boundary, we need to compute the density $\frac{d\mathbf{m}_{p_t,\alpha_t}\left(\kappa\right)}{d\kappa}$ of the limit counting measure, which can be computed by the Stieljes transform $\mathrm{St}_{\mathbf{m}_{p_t,\alpha_t}}(x)$ of $\mathbf{m}_{p_t,\alpha_t}$.

Define 
\begin{eqnarray*}
F_{p_t,\alpha_t,M}(z)=\frac{z\left[Q'_{p_t,\alpha_t}(z)+W_{p_t,\alpha_t}(z)\right]}{\phi_{p_t,\alpha_t}}.
\end{eqnarray*}
Then when
$x$ is in a neighborhood of infinity, we have
\begin{align*}
  \mathrm{St}_{\mathbf{m}_{p_t,\alpha_t}}(x)
  &=\sum_{j=0}^{\infty}x^{-(j+1)} \int_{\RR}y^j\mathbf{m}_{p_t,\alpha_t}(dy)\\
&=\sum_{j=0}^{\infty}
\frac{1}{2(j+1)\pi\mathbf{i}}\oint_{C_{p_t,L,-}}\left(\frac{F_{p_t,\alpha_t,M}(z)}{x}\right)^{j+1}\frac{dz}{z}\\
&=-\frac{1}{2\pi\mathbf{i}}\oint_{C_{p_t,L,-}}\log\left(1-\frac{F_{p_t,\alpha_t,M}(z)}{x}\right)\frac{dz}{z}.
\end{align*}
Integration by parts gives
\begin{equation*}
\mathrm{St}_{\mathbf{m}_{p_t,\alpha_t}}(x)=
\frac{1}{2\pi\mathrm{i}}\left[\oint_{C_{p_t,L,-}}\log
z\frac{\frac{d}{dz}\left(1-\frac{F_{p_t,\alpha_t,M}(z)}{x}\right)}{1-\frac{F_{p_t,\alpha_t,M}(z)}{x}}dz
-\oint_{C_{p_t,L,-}} 
d\left(\log z\log \left(1-\frac{F_{p_t,\alpha_t,M}(z)}{x}\right)\right)\right].
\end{equation*}

Because for each $p\in[p_t..m]$, and $j\in[n_p]$ $F_{\kappa,p_t,\alpha_t}(z)$ has a Laurent series expansion in a neighborhood of $x_j^{(p)}$ given by
\begin{equation*}
F_{p_t,\alpha_t,M}(z)=\frac{(V_p-V_{p-1})\beta_{t,p}x_j^{(p)}}{(V_m-V_0)n_p\left(z-x_j^{(p)}\right)}+\sum_{k=0}^{\infty}c_k\left(z-x_j^{(p)}\right)^k,
\end{equation*}
where
\begin{eqnarray*}
\beta_{t,p}=\begin{cases}\alpha_t,&\mathrm{if}\ p=p_t;\\1,&\mathrm{otherwise}.\end{cases}
\end{eqnarray*}
$F_{p_t,\alpha_t,M}(z)=x$ has exactly one root in a neighborhood of $x_j^{(p)}$ when $x$ is in a neighborhood of $\infty$, and thus, we can can find a unique composite inverse Laurent series
given by
\begin{equation*}
G_{p_t,\alpha_t,M,j,p}(w)=x_j^{(p)}+\sum_{i=1}^{\infty}\frac{\beta_i^{(j)}}{w^i},
\end{equation*}
such that $F_{p_t,\alpha_t,M}(G_{p_t,\alpha_t,M,j}(w))=w$ when $w$ is in a neighborhood of
infinity. Then
\begin{equation}
  \label{eq:rootFG}
z_{j,p}(x)=G_{p_t,\alpha_t,M,j}(x)
\end{equation}
is the unique root of $F_{p_t,\alpha_t,M}(z)=x$ in a neighborhood of $x_i$.

Since $1-\frac{F_{p_t,\alpha_t,M}}{x}$ has exactly one zero $z_{j,p}(x)$ and one pole $x_j^{(p)}$ in a neighborhood of $x_j^{(p)}$, we have
\begin{equation*}
\oint_{x_j^{(p)}}d\left(\log z\log \left(1-\frac{F_{p_t,\alpha_t,M}(z)}{x}\right)\right)=0;
\end{equation*}
and therefore
\begin{equation*}
\mathrm{St}_{\mathbf{m}_{p_t,\alpha_t}}(x)=\sum_{p=p_t}^{m}\sum_{j=1}^{n_p}\left[\log(z_{j,p}(x))-\log x_j^{(p)}\right]\label{sjl}
\end{equation*}
when $x$ is in a neighborhood of infinity. By the complex analyticity of both
sides of \eqref{sjl}, we infer that \eqref{sjl} holds whenever $x$ is outside
the support of $\mathbf{m}_{p_t,\alpha_t}$.

\begin{lemma}\label{l25}
  Assume the liquid region is nonempty, and assume that for any $x\in \RR$,
  $F_{p_t,\alpha_t, M}(z)=x$ has at most one pair of nonreal conjugate roots.   Let $\chi$ be given by (\ref{dfc}) and $\kappa\in \RR$. Then for
  any point $(\chi,\kappa)$ lying on the frozen boundary, the equation
  \begin{align}
  F_{p_t,\alpha_t,M}=\frac{\kappa-(1-\phi_{p_t,\alpha_t})}{\phi_{p_t,\alpha_t}}\label{ceq}
 \end{align} 
  has double roots.
\end{lemma}
\begin{proof}
  The continuous density $f_{\mathbf{m}_{p_t,\alpha_t}}(x)$ of the
  measure $\mathbf{m}_{\kappa}(x)$ with respect to the Lebesgue measure is given
  by
  \begin{equation*}
    f_{\mathbf{m}_{p_t,\alpha_t}}(x)=-\lim_{\epsilon\rightarrow 0+}\frac{1}{\pi}\Im
    [\mathrm{St}_{\mathbf{m}_{p_t,\alpha_t}}(x+\mathbf{i}\epsilon)]
  \end{equation*}
  By~\eqref{sjl}, we have
  \begin{equation*}
    f_{\mathbf{m}_{p_t,\alpha_t}}(x)=-\lim_{\epsilon\rightarrow 0+}\frac{1}{\pi}\mathrm{Arg}\left(\prod_{p=p_t}^m\prod_{j=1}^{n_p} z_{j,p}(x+\mathbf{i}\epsilon)\right).
  \end{equation*}
  If the liquid region is nonempty, and for any $x\in \RR$, $F_{p_t,\alpha_t,M}=x$ has
  at most one pair of complex conjugate roots. Hence for each point $(\chi,\kappa)$
  in the liquid region, there is exactly one pair of roots
  $z_{j,p}\left(x+\mathbf{i}\epsilon\right)$
  from~\eqref{eq:rootFG} converging to non-real roots of
  $F_{p_t,\alpha_t,M}(z)=x$ as $\epsilon\rightarrow 0$; and all the others converge to real
  roots of $F_{\kappa,M}(z)=x$.
   We may consider the partition corresponding to dimer 
  configurations at each column of the rail-yard graph as a partition of length $N^{(\epsilon)}$ by adding 0's in the end if necessary, 
  then the particle corresponding to the minimal part of each partition are in the same horizontal level. A rescaling shows that
  \begin{align*}
  x\phi_{p_t,\alpha_t}+(1-\phi_{p_t,\alpha_t})=\kappa
  \end{align*}
Then, the lemma follows from
  the complex analyticity of the density of the limit measure with respect to
  $(\chi,\kappa)$.
\end{proof}

\subsection{The case $M=1$.}

When $M=1$, the equation (\ref{ceq}) becomes
\begin{small}
%\begin{eqnarray*}
%\frac{\kappa}{u}&=&\sum_{p=1}^{p_t-1}\sum_{s=1}^{n_p}\frac{(V_p-V_{p-1})\zeta_{R,s,p}}{V_m-V_0}\frac{x_s^{(p)}}{1+ux_{s}^{(p)}}+\sum_{s=1}^{n_{p_t}}\frac{\alpha_t(V_{p_t}-V_{{p_t}-1})\zeta_{R,s,p_t}}{V_m-V_0}\frac{x_s^{(p_t)}}{1+u_ix_{s}^{(p_t)}}\\
%&&+\sum_{p=1}^{p_t-1}\sum_{s=1}^{n_p}\frac{(V_p-V_{p-1})\zeta_{L,s,p}}{V_m-V_0}\frac{x_s^{(p)}}{1-ux_{s}^{(p)}}+\sum_{s=1}^{n_{p_t}}\frac{\alpha_t(V_{p_t}-V_{{p_t}-1})\zeta_{L,s,p_t}}{V_m-V_0}\frac{x_s^{(p_t)}}{1-ux_{s}^{(p_t)}}\\
%&&\sum_{d\in [p_t+1..m]}\sum_{j\in [n_d],a_{j,d}=L,b_{j,d}=-}\frac{V_d-V_{d-1}}{(V_m-V_0)n_d}\frac{1}{u-x_j^{(d)}}\\
%&&+\sum_{j\in [n_{p_t}],a_{j,{p_t}}=L,b_{j,{p_t}}=-}\frac{(V_{p_t}-V_{p_t-1})\alpha_t}{(V_m-V_0)n_{p_t}}\frac{1}{u-x_j^{(p_t)}}.
%\end{eqnarray*}
\begin{eqnarray}
&&\kappa-1+\phi_{p_t,\alpha_t}=\sum_{d\in [p_t+1..m]}\sum_{j\in [n_d],a_{j,d}=L,b_{j,d}=-}\frac{V_d-V_{d-1}}{(V_m-V_0)n_d}\left(1+\frac{x_j^{(d)}}{u-x_j^{(d)}}\right)\label{m1k}\\
&&+\sum_{j\in [n_{p_t}],a_{j,{p_t}}=L,b_{j,{p_t}}=-}\frac{(V_{p_t}-V_{p_t-1})(1-\alpha_t)}{(V_m-V_0)n_{p_t}}\left(1+\frac{x_j^{(p_t)}}{u-x_j^{(p_t)}}\right)\notag\\
&&+\sum_{p=1}^{p_t-1}\sum_{s=1}^{n_p}\frac{(V_p-V_{p-1})\zeta_{R,+,s,p}}{V_m-V_0}\left(1-\frac{1}{1+ux_{s}^{(p)}}\right)\notag\\
&&+\sum_{s=1}^{n_{p_t}}\frac{\alpha_t(V_{p_t}-V_{{p_t}-1})\zeta_{R,+,s,p_t}}{V_m-V_0}\left(1-\frac{1}{1+ux_{s}^{(p_t)}}\right)\notag\\
&&+\sum_{p=1}^{p_t-1}\sum_{s=1}^{n_p}\frac{(V_p-V_{p-1})\zeta_{L,+,s,p}}{V_m-V_0}\left(\frac{1}{1-ux_{s}^{(p)}}-1\right)\notag\\
&&+\sum_{s=1}^{n_{p_t}}\frac{\alpha_t(V_{p_t}-V_{{p_t}-1})\zeta_{L,+,s,p_t}}{V_m-V_0}\left(\frac{1}{1-ux_{s}^{(p_t)}}-1\right)\notag\\
&&:=H_{p_t,\alpha_t}(u)\notag
\end{eqnarray}
\end{small}

The expression in the righthand side of (\ref{m1k}) has singular points
\begin{small}
\begin{eqnarray}
&&\label{sgp1}\left\{-\frac{1}{x_s^{(p)}}\right\}_{p\in[p_t],s\in[n_p],a_{s,p}=R,b_{s,p}=+};\qquad
\left\{\frac{1}{x_s^{(p)}}\right\}_{p\in[p_t],s\in[n_p],a_{s,p}=L,b_{s,p}=+};\\
&&\{x_s^{(p)}\}_{p\in[p_t..m],s\in[n_p],a_{s,p}=L,b_{s,p}=-}.\notag
\end{eqnarray}
\end{small}

Note that under Assumption \ref{ap14},
\begin{enumerate}
\item Between each pair of nearest points in 
$\{x_s^{(p)}\}_{p\in[p_t..m],s\in[n_p],a_{s,p}=L,b_{s,p}=-}$, (\ref{m1k}) has at least one real solution;
\item Between each pair of nearest points in 
$\left\{-\frac{1}{x_s^{(p)}}\right\}_{p\in[p_t],s\in[n_p],a_{s,p}=R,b_{s,p}=+}$, (\ref{m1k}) has at least one real solution;
\item Between each pair of nearest points in 
$\left\{\frac{1}{x_s^{(p)}}\right\}_{p\in[p_t],s\in[n_p],a_{s,p}=L,b_{s,p}=+}$;, (\ref{m1k}) has at least one real solution;
\end{enumerate}
Let $N$ be the degree of (\ref{m1k}); items (1)(2)(3) above give at least $N-3$ real solutions for (\ref{m1k}); hence (\ref{m1k}) has at most one pair of complex conjugate roots.

By Lemma \ref{l25}, the frozen boundary is given by the equation (\ref{m1k}) has double roots.
When (\ref{m1k}) has double real roots, we have
\begin{eqnarray*}
\frac{dF_{p_t,\alpha_t,1}(u)}{du}=0;
\end{eqnarray*}
which gives 
\begin{small}
\begin{eqnarray}
&&0=\sum_{d\in [p_t+1..m]}\sum_{j\in [n_d],a_{j,d}=L,b_{j,d}=-}\frac{V_d-V_{d-1}}{(V_m-V_0)n_d}\left(-\frac{x_j^{(d)}}{(u-x_j^{(d)})^2}\right)\label{m2k}\\
&&+\sum_{j\in [n_{p_t}],a_{j,{p_t}}=L,b_{j,{p_t}}=-}\frac{(V_{p_t}-V_{p_t-1})(1-\alpha_t)}{(V_m-V_0)n_{p_t}}\left(-\frac{x_j^{(p_t)}}{(u-x_j^{(p_t)})^2}\right)\notag\\
&&+\sum_{p=1}^{p_t-1}\sum_{s=1}^{n_p}\frac{(V_p-V_{p-1})\zeta_{R,+,s,p}}{V_m-V_0}\left(\frac{x_s^{(p)}}{(1+ux_{s}^{(p)})^2}\right)\notag\\
&&+\sum_{s=1}^{n_{p_t}}\frac{\alpha_t(V_{p_t}-V_{{p_t}-1})\zeta_{R,+,s,p_t}}{V_m-V_0}\left(\frac{x_s^{(p_t)}}{(1+ux_{s}^{(p_t)})^2}\right)\notag\\
&&+\sum_{p=1}^{p_t-1}\sum_{s=1}^{n_p}\frac{(V_p-V_{p-1})\zeta_{L,+,s,p}}{V_m-V_0}\left(\frac{x_s^{(p)}}{(1-ux_{s}^{(p)})^2}\right)\notag\\
&&+\sum_{s=1}^{n_{p_t}}\frac{\alpha_t(V_{p_t}-V_{{p_t}-1})\zeta_{L,+,s,p_t}}{V_m-V_0}\left(\frac{x_s^{(p_t)}}{(1-ux_{s}^{(p_t)})^2}\right)\notag\\
&&:=G_{p_t,\alpha_t}(u)\notag
\end{eqnarray}
\end{small}

\subsection{The case $M=1$ and $m=1$.} When $m=1$, assume that 
\begin{itemize}
\item $n_1=n$; and
\item for $j\in[n]$, 
$
a_{j,1}=a_j;\qquad
b_{j,1}=b_j; \qquad 
x_j^{(1)}=x_j
$; and
\item 
$
V_0=0,\qquad V_1=1;\qquad \alpha_t=\chi.
$
\item For $\mathbf{a}\in\{L,R\}$, and $\mathbf{b}\in\{+,-\}$ $j\in[n]$,
$\zeta_{\mathbf{a},\mathbf{b},j,1}=\zeta_{\mathbf{a},\mathbf{b},j}$.
\end{itemize}
Then
\begin{align*}
\phi_{p_t,\alpha_t}=\phi_{1,\chi}=-\frac{\sum_{s\in[n]}\zeta_{L,-,s}}{n}(\chi-1)
\end{align*}
and (\ref{m1k}) becomes 
\begin{small}
\begin{align*}
&\kappa-\left[1-\frac{\sum_{s\in[n]}\zeta_{L,-,s}}{n}(1-\chi)\right]\\
&=\sum_{j\in [n],a_{j}=L,b_{j}=-}\frac{1-\chi}{n}\left(1+\frac{x_j}{u-x_j}\right)+\sum_{s=1}^{n}\chi\zeta_{R,+,s}\left(1-\frac{1}{1+ux_{s}}\right)+\sum_{s=1}^{n}\chi\zeta_{L,+s}\left(\frac{1}{1-ux_{s}}-1\right)
\end{align*}
\end{small}
and (\ref{m2k}) becomes 
\begin{small}
\begin{eqnarray*}
&&0=-\sum_{j\in [n],a_{j}=L,b_{j}=-}\frac{x_j(1-\chi)}{n(u-x_j)^2}+\sum_{s=1}^{n}\frac{\chi\zeta_{R,+,s}x_s}{(1+ux_{s})^2}+\sum_{s=1}^{n}\frac{\chi\zeta_{L,+,s}x_s}{(1-ux_{s})^2}
\end{eqnarray*}
\end{small}

Let 
\begin{eqnarray}
U(u):&=&\sum_{s=1}^{n}\zeta_{R,+,s}\left(1-\frac{1}{1+ux_{s}}\right)+\sum_{s=1}^{n}\zeta_{L,+s}\left(\frac{1}{1-ux_{s}}-1\right)\label{duu}\\
V(u):&=&\sum_{j\in [n],a_{j}=L,b_{j}=-}\frac{1}{n}\frac{x_j}{u-x_j}\label{dvu}
\end{eqnarray}

Then we may write the frozen boundary in parametric form as follows
\begin{eqnarray*}
\mathcal{FB}=(\chi(u),\kappa(u));
\end{eqnarray*}
where
\begin{eqnarray*}
\chi(u)=\frac{V'(u)}{V'(u)-U'(u)}
\end{eqnarray*}
and
\begin{small}
\begin{eqnarray*}
&&\kappa(u)=\chi(u)U(u)+(1-\chi(u))V(u)+1;
\end{eqnarray*}
\end{small}
The dual curve $\mathcal{FB}^{\vee}=(\chi^{\vee}(u),\kappa^{\vee}(u))$ can be obtained by the formula below
\begin{eqnarray*}
\chi^{\vee}(u)&=&\frac{\frac{d\kappa(u)}{du}}{\kappa(u)\frac{d\chi(u)}{du}-\chi(u)\frac{d\kappa(u)}{du}}\\
\kappa^{\vee}(u)&=&\frac{\frac{d\chi(u)}{du}}{\chi(u)\frac{d\kappa(u)}{du}-\kappa(u)\frac{d\chi(u)}{du}}
\end{eqnarray*}
Then the dual curve has the following parametric equation
\begin{eqnarray}
(\chi^{\vee}(u),\kappa^{\vee}(u))=\left(\frac{U(u)-V(u)}{V(u)+1},-\frac{1}{V(u)+1}\right)\label{pexy}
\end{eqnarray}

\begin{definition}(\cite{KO07})A degree-$d$ real algebraic curve $C\subset\mathbb{RP}^2$ is winding if the following two conditions hold:
\begin{enumerate}
\item it intersects every line $L\subset\mathbb{RP}^2$ in at least $d-2$ points counting multiplicity,
\item there exists a point $p_0\in\mathbb{RP}^2$ called center, such that every line
through $p_0$ intersects $C$ in $d$ points.
\end{enumerate}
The dual curve of a winding curve is called a cloud curve.
\end{definition}

The next theorem gives a description of the frozen boundary when $M=m=1$, shows that the frozen boundary is a cloud curve in this case, and explicitly computes its rank. The fact that the frozen boundaries for a large class of simply-connected liquid regions are cloud curves were also been proved in \cite{ADPZ20}. 

\begin{theorem}\label{t38}Assume $M=m=1$. Then the frozen boundary $\mathcal{FB}$ of perfect matchings on rail-yard graph is a cloud curve of rank $n'$, where $n'$ is the total number of distinct elements in the set
\begin{eqnarray}
\left\{\{x_j\}_{j\in[n],a_j=L,b_j=-},\left\{-\frac{1}{x_s}\right\}_{s\in[n],a_s=R,b_s=+},\left\{\frac{1}{x_k}\right\}_{k\in[n],a_k=L,b_k=+}\right\}\label{spt}
\end{eqnarray}
Moreover,
\begin{enumerate}[label=(\alph*)]
    \item $\mathcal{FB}$ is tangent to $\chi=0$, with number of tangent points equal to
    \begin{eqnarray}
\left|\left\{-\frac{1}{x_s}\right\}_{s\in[n],a_s=R,b_s=+},\left\{\frac{1}{x_k}\right\}_{k\in[n],a_k=L,b_k=+}\right|.\label{tn0}
\end{eqnarray}
\item $\mathcal{FB}$ is tangent to $\chi=1$, with number of tangent points equal to
\begin{eqnarray}
\left|\{x_j\}_{j\in[n],a_j=L,b_j=-}\right|.\label{tn1}
\end{eqnarray}
\item 
      If $u_i$ satisfies
\begin{eqnarray}
U(u_i)=V(u_i);\qquad -\frac{1}{V(u_i)+1}=c_i\in \RR,\label{uvci}
\end{eqnarray}
Then $\mathcal{FB}$ is tangent to the line $\kappa=V(u_i)+1$.
\end{enumerate}
\end{theorem}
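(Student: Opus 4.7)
The plan is to prove Theorem~\ref{t38} by analyzing the dual curve $\mathcal{FB}^{\vee}$ given by the rational parametrization \eqref{pexy}; since a cloud curve of rank $n'$ is by definition the dual of a winding curve of degree $n'$, it suffices to show that $\mathcal{FB}^{\vee}$ is winding of degree $n'$. The tangencies (a)--(c) then follow by matching specific projective points of $\mathcal{FB}^{\vee}$ with specific parameter values of $u$.

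First I would homogenize \eqref{pexy} as $u\mapsto[\,U(u)-V(u):-1:V(u)\,]$ in $\mathbb{RP}^{2}$ and compute its degree. Writing $S_{1}=\{x_{j}\}_{a_{j}=L,b_{j}=-}$, $S_{2}=\{-1/x_{s}\}_{a_{s}=R,b_{s}=+}$, $S_{3}=\{1/x_{k}\}_{a_{k}=L,b_{k}=+}$, so that $n'=|S_{1}\cup S_{2}\cup S_{3}|$, we have $S_{2}\subset(-\infty,0)$ disjoint from $S_{1}\cup S_{3}\subset(0,\infty)$ since the edge weights are positive. The poles of $V$ are exactly $S_{1}$, the poles of $U$ are exactly $S_{2}\cup S_{3}$, and at any possible coincidence inside $S_{1}\cup S_{3}$ the residues of $U$ and $V$ have the same sign (both proportional to positive $\zeta$-densities) and so cannot cancel in $U-V$. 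Hence $U-V$ has simple poles exactly on $S_{1}\cup S_{2}\cup S_{3}$, and the pull-back of a generic real line $\alpha\chi+\beta\kappa+\gamma=0$ under the parametrization is the degree-$n'$ rational equation $\alpha U(u)+(\gamma-\alpha)V(u)=\beta$, so $\mathcal{FB}^{\vee}$ is an algebraic curve of degree $n'$.

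Next I would verify that $\mathcal{FB}^{\vee}$ is winding. The $n'$ real simple poles of the parametrization partition $\mathbb{RP}^{1}$ into $n'$ real arcs, each mapped onto a real branch of $\mathcal{FB}^{\vee}$. Using Lemma~\ref{l37} together with the interlacing of the real zeros and poles of $V$, a sign analysis of the pull-back equation $\alpha U+(\gamma-\alpha)V-\beta$ across consecutive poles yields a sign change, hence a real root, on all but at most two of the arcs, establishing the first winding condition. A center point can be exhibited by choosing a direction in which all $n'$ roots of the pull-back remain real, for instance far along one of the coordinate axes. By projective duality, $\mathcal{FB}$ is then a cloud curve of rank $n'$.

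Finally I would read off the tangencies from the homogeneous line coordinates $[\,U(u)-V(u):-1:V(u)\,]$ of the tangent $\kappa=\chi U(u)+(1-\chi)V(u)$. For (a), the line $\chi=0$ has dual point $[1:0:0]$, attained in the limit as $u$ approaches any pole of $U$, i.e.\ any $u\in S_{2}\cup S_{3}$, giving \eqref{tn0}. For (b), the line $\chi=1$ has dual point $[1:0:-1]$; after dividing the parametrization by $V(u)$ and letting $u$ approach a pole of $V$ in $S_{1}$, one obtains $[-1:0:1]=[1:0:-1]$, giving \eqref{tn1}. For (c), the line $\kappa=V(u_{i})$ has dual point $[0:1:-V(u_{i})]$; evaluating the parametrization at $u=u_{i}$ and using $U(u_{i})=V(u_{i})$ gives $[0:-1:V(u_{i})]$, which equals the required dual point up to an overall sign. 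The main obstacle will be the rigorous verification of the winding property, in particular producing an explicit center point and controlling real root counts on every arc, which demands careful sign and interlacing analysis of $U$ and $V$ between their real poles and zeros.
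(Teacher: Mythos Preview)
Your approach is essentially the same as the paper's: show that the dual curve $\mathcal{FB}^{\vee}$, in the rational parametrization \eqref{pexy}, is winding of degree $n'$ by counting real intersections with lines, and then read off the tangencies from the projective coordinates $[\,U(u)-V(u):-1:V(u)\,]$. Your tangency analysis in (a)--(c) matches the paper's exactly.

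The only substantive difference is in the winding verification. Where you outline a generic ``sign analysis across consecutive poles yields a sign change on all but at most two arcs,'' the paper makes this explicit: for a non-vertical line $\chi^{\vee}=c\kappa^{\vee}+d$, the pull-back equation is $(d+1)V(u)-U(u)=c$, and the paper argues directly that this function is monotone (decreasing from $+\infty$ to $-\infty$) between consecutive real singularities, hence has $n'$ or $n'-1$ real roots. This uses the ordering $S_2<0<S_1<S_3$ forced by Assumption~\ref{ap14} (so that consecutive residues can change sign at most twice). Your assertion that the residues of $U$ and $V$ at a coincidence in $S_1\cup S_3$ ``have the same sign'' is not quite right (they have opposite sign, so they reinforce in $U-V$), but in fact Assumption~\ref{ap14} forces $S_1\cap S_3=\emptyset$ anyway, so the issue is moot. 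Likewise, your remark that a center can be found ``far along one of the coordinate axes'' is correct in spirit; the paper effectively exhibits such a direction by noting that for $c\neq\xi_{\infty}$ all $n'$ roots are real. The main gap you flag---a rigorous interlacing/sign count and an explicit center---is precisely what the paper fills in with the monotonicity claim; once you carry out the residue-sign count along the ordered pole set $S_2<S_1<S_3$, your argument goes through.
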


\begin{proof}It suffices to show that the dual curve $\mathcal{FB}^{\vee}$ is a winding curve of degree $n'$.

From the parametric equation (\ref{pexy}) which we can read that its degree is $n'$. To show that ${\mathcal{FB}}^{\vee}$ is winding, we need to look at real intersections with straight lines.

From (\ref{pexy}) we also obtain that
  \begin{equation*}
    \chi^{\vee}(u)=\kappa^{\vee}(u)\left(V(u)-U(u)\right)
  \end{equation*}

  The points of intersection $(\chi^\vee(u),\kappa^\vee(u))$ of the dual curve with a straight line
  of the form $\chi^\vee=c\kappa^\vee+d$ have a parameter $u$ satisfying:
  \begin{equation}
    c=\Theta(u):=(d+1)V(u)-U(u)+d.
    \label{eq:intersect_dual}
  \end{equation}
  Note that $\Theta(u)$ satisfies the following conditions:
  \begin{enumerate}
      \item All the singularities of $\Theta(u)$ are given by (\ref{spt}).
      \item Between any two consecutive singularities of $\Theta(u)$, $\Theta(u)$ decreases from $\infty$ to $-\infty$.
      \item Let $t_2$ be the maximal singular point of $\Theta(u)$, then in $(t_2,\infty)$, $\Theta(u)$ decreases from $\infty$ to
      \begin{eqnarray}
      \xi_{\infty}:=d
     -\sum_{s=1}^{n}\zeta_{R,+,s}+\sum_{s=1}^{n}\zeta_{L,+s}\label{sif}
      \end{eqnarray}
      \item Let $t_1$ be the minimal singular point of $\Theta(u)$, then in $(-\infty,t_1)$, $\Theta(u)$ decreases from (\ref{sif}) to $-\infty$.
      \end{enumerate}
  
 Therefore for each $c\in \RR\setminus \xi_{\infty}$, (\ref{eq:intersect_dual}) has exactly $n'$ solutions in $u$. If $c=\xi_{\infty}$, (\ref{eq:intersect_dual}) has exactly $n'-1$ solutions in $u$ (and exactly $n'$ solutions if $\infty$ is counted). This implies that the dual curve $\mathcal{FB}^{\vee}$ intersects any straight line of the form $\chi^{\vee}=c\kappa^{\vee}+d$ at least $n'-1$ points, counting multiplicities.

  To consider the vertical lines $\kappa^\vee=c$, we rewrite the equations in homogeneous
  coordinates $[\chi^\vee:\kappa^\vee:z^\vee]$. We obtain that the line $\kappa^\vee=cz^\vee$ intersects the curve at the
  point $[1:0:0]$ with multiplicity at least $n'$, where
    with corresponding $u$'s given by the poles of $U(u)$ or the poles of $V(u)$. 
 This completes the proof that $\mathcal{FB}$ is a cloud curve of rank $n'$.

Recall that each point on the dual curve ${\mathcal{FB}}^{\vee}$ corresponds to a tangent
line of $\mathcal{FB}$. Then we have
\begin{enumerate}
\item The point
$(\chi^\vee,\kappa^\vee)=(-1,0)\in \mathcal{FB}^{\vee}$, with corresponding $u$'s poles of $V(u)$,  corresponds to the tangent line $\chi=1$ of $\mathcal{FB}$. The number of tangent points of $\chi=1$ to $\mathcal{FB}$, is the number of poles of $V(u)$; given by (\ref{tn1}).
\item The point
$\chi^\vee=\infty\in \mathcal{FB}^{\vee}$, with corresponding $u$'s poles of $U(u)$,  corresponds to the tangent line $\chi=0$ of $\mathcal{FB}$. The number of tangent points of $\chi=0$ to $\mathcal{FB}$, is the number of poles of $U(u)$ given by (\ref{tn0}).
\item If $u_i$ satisfies (\ref{uvci})
Then the point $(0,c_i)\in \mathcal{FB}^{\vee}$ correspond to tangent line $c_i\kappa+1=0$ of $\mathcal{FB}$.
\end{enumerate}
\end{proof}

\begin{example}\label{e39}Consider the case when
\begin{eqnarray*}
M=1,\qquad m=1,\qquad n_1=3,\qquad V_0=0,\qquad V_1=1.
\end{eqnarray*}
and
\begin{eqnarray*}
(a_1,b_1)=(L,-),\qquad (a_2,b_2)=(R,+),\qquad (a_3,b_3)=(L,+).
\end{eqnarray*}
and
\begin{eqnarray*}
x_1^{(1)}=\frac{1}{3},\qquad x_2^{(1)}=\frac{1}{2},\qquad x_3^{(1)}=1;
\end{eqnarray*}
then
\begin{align*}
\phi_{1,\chi}=\frac{1-\chi}{3}.
\end{align*}
 the frozen boundary has parametric equation given by
\begin{eqnarray*}
\chi(u)=\frac{\frac{1}{(3u-1)^2}}
{\frac{1}{(3u-1)^2}+\frac{2}{3(u+2)^2}+\frac{1}{3(1-u)^2}}
\end{eqnarray*}
and
\begin{small}
\begin{eqnarray*}
&&\kappa(u)=(1-\chi(u))\frac{1}{3(3u-1)}+\chi(u)\left[\frac{1}{3}\frac{u}{u+2}+\frac{1}{3}\frac{u}{1-u}\right]+1.
\end{eqnarray*}
\end{small}
See Figure \ref{fig:e39}.
\begin{figure}
\includegraphics[width=.8\textwidth]{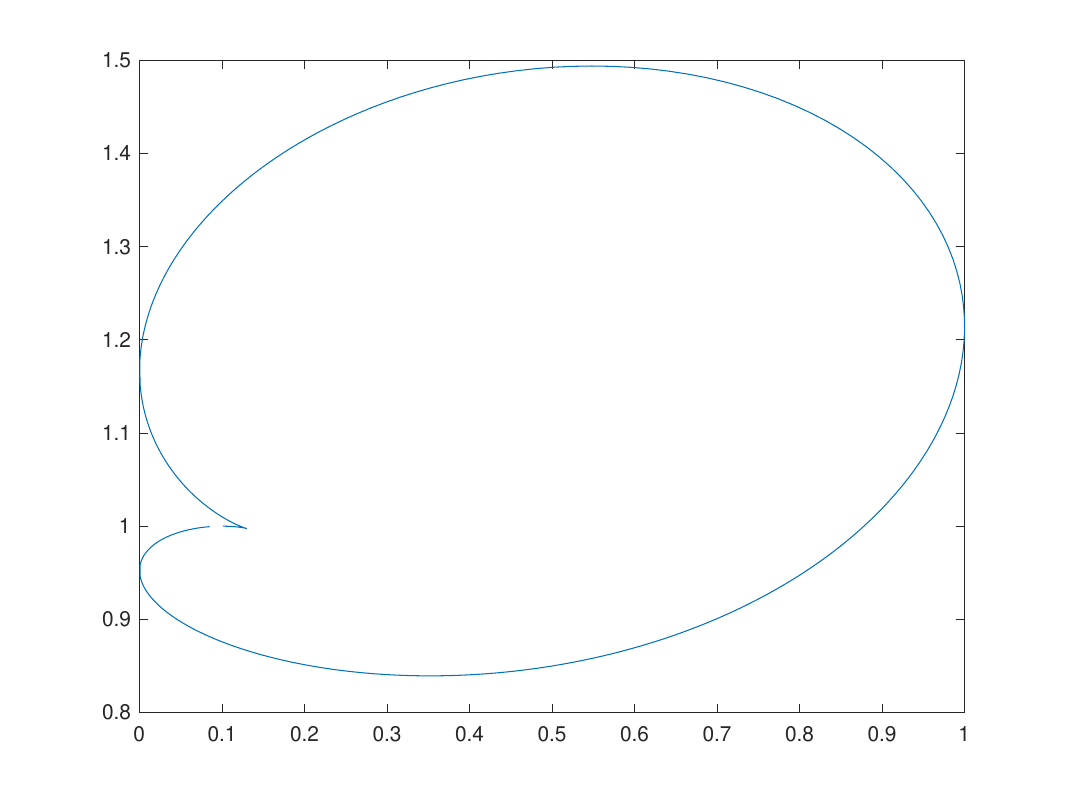}
\caption{Frozen boundary of Example \ref{e39}}\label{fig:e39}
\end{figure}
\end{example}

\subsection{The case $M=1$ and $m>1$.}
\begin{example}\label{e310}Consider the case when
\begin{eqnarray*}
M=1,\qquad m=2,\qquad n_1=2,\qquad n_2=3;
\end{eqnarray*}
and
\begin{eqnarray*}
\qquad V_0=0,\qquad V_1=0.3,\qquad V_2=1.
\end{eqnarray*}
and
\begin{eqnarray*}
(a_1^{(1)},b_1^{(1)})=(L,-),\qquad (a_2^{(1)},b_2^{(1)})=(R,+).
\end{eqnarray*}
and
\begin{eqnarray*}
(a_1^{(2)},b_1^{(2)})=(L,-),\qquad (a_2^{(2)},b_2^{(2)})=(R,+),\qquad (a_3^{(2)},b_3^{(2)})=(L,+).
\end{eqnarray*}
and
\begin{eqnarray*}
x_1^{(1)}=\frac{1}{3},\qquad x_2^{(1)}=\frac{1}{2},\qquad x_1^{(2)}=1;\qquad x_2^{(2)}=\frac{1}{6},\qquad x_3^{(2)}=\frac{1}{5},
\end{eqnarray*}
Then
\begin{eqnarray*}
\chi=\begin{cases}0.3\alpha_t,&\mathrm{if}\ \chi\in(0,0.3)\\
0.3+0.7\alpha_t,&\mathrm{if}\ \chi\in(0.3,1).
\end{cases}
\end{eqnarray*}
and
\begin{align*}
\phi_{p_t,\alpha_t}=\begin{cases}\frac{23}{60}-\frac{\chi}{2};&\mathrm{if}\ \chi\in(0,0.3)\\ \frac{1-\chi}{3}.&\mathrm{if}\ \chi\in(0.3,1)\end{cases}
\end{align*}
Then the frozen boundary has parametric equation given by 
\begin{enumerate}
\item If $\chi\in(0,0.3)$, then
\begin{eqnarray*}
\chi(u)&=&\frac{\frac{7}{30(u-1)^2}+\frac{9}{20(3u-1)^2}}{\frac{3}{2(3u-1)^2}+\frac{1}{(2+u)^2}};\\
\kappa(u)&=&\frac{7}{30}\frac{u}{u-1}+\frac{9}{20}\frac{u}{3u-1}+\left(\frac{u}{2(2+u)}-\frac{3u}{2(3u-1)}\right)\chi+\frac{37}{60}+\frac{\chi}{2}.
\end{eqnarray*}
\item If $\chi\in(0.3,1)$, then
\begin{small}
\begin{align*}
\chi(u)&=\frac{\frac{1}{3(u-1)^2}-\frac{3}{10(2+u)^2}+\frac{3}{5(6+u)^2}+\frac{1}{2(5-u)^2}}{\frac{1}{3(u-1)^2}+\frac{2}{(6+u)^2}+\frac{5}{3(5-u)^2}}
\\
\kappa(u)&=\frac{1}{3}\frac{u}{u-1}+\frac{3}{20}\frac{u}{2+u}-\frac{u}{10(u+6)}-\frac{u}{10(5-u)}
+\left(\frac{u}{3(6+u)}+\frac{u}{3(5-u)}-\frac{u}{3(u-1)}\right)\chi+\frac{2}{3}+\frac{\chi}{3}
\end{align*}
\end{small}
\end{enumerate}
See Figure \ref{fig:ex37} for the graph of frozen boundary; and see Figure \ref{fig:ex37s} for a simulation using the Markov Chain Monte Carlo.
\begin{figure}
\includegraphics[width=.8\textwidth]{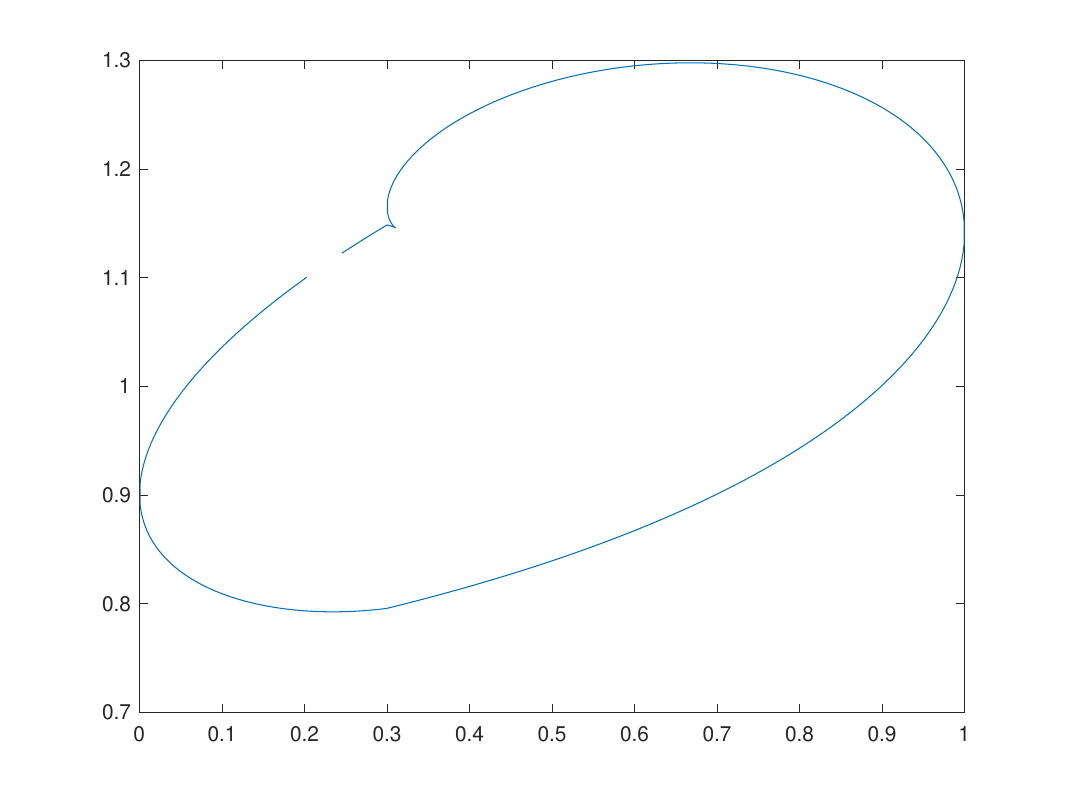}
\caption{Frozen boundary of Example \ref{e310}}\label{fig:ex37}
\end{figure}
\begin{figure}
\includegraphics[width=.8\textwidth]{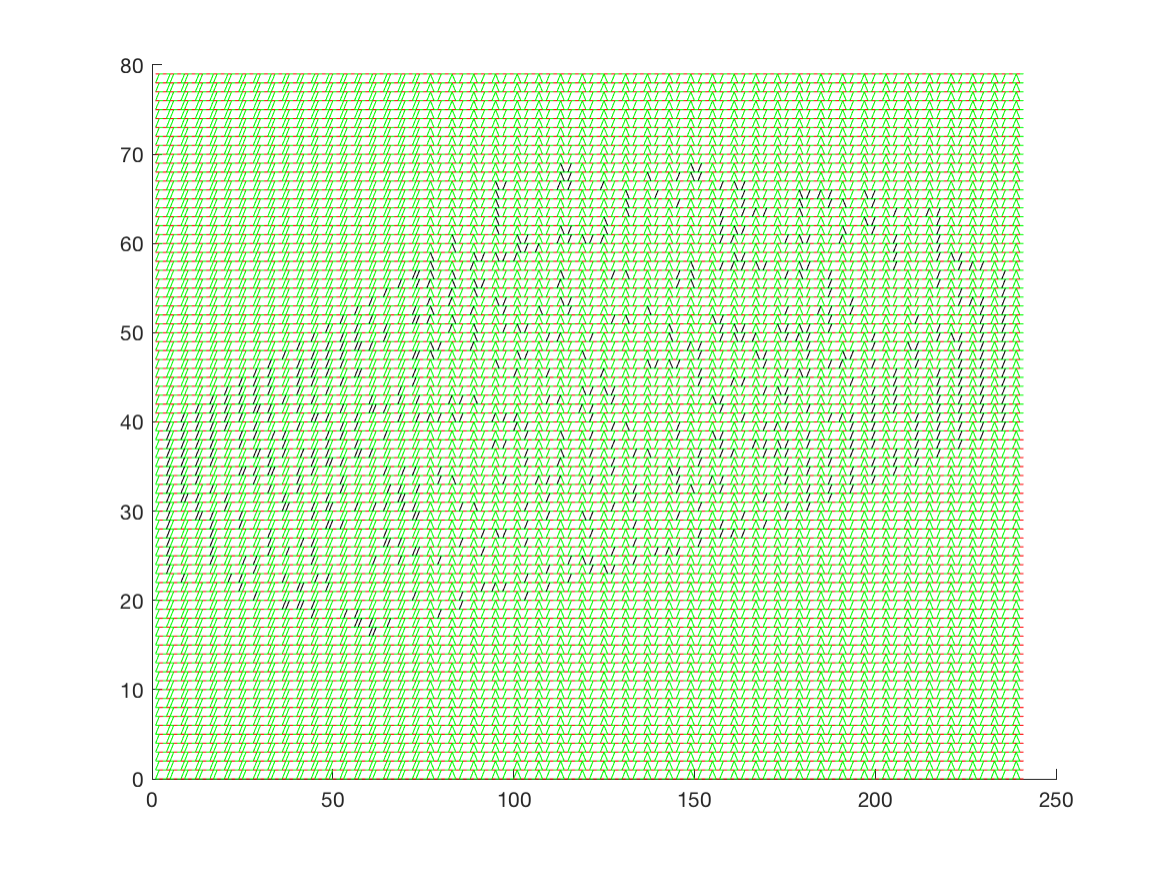}
\caption{Simulation of dimer coverings in Example \ref{e310}. Absent edges are represented by green lines; present horizontal edges are represented by red lines; present diagonal edges are represented by black lines.}\label{fig:ex37s}
\end{figure}
\end{example}

\subsection{The case $M=2$.}

When $M=2$, the equation (\ref{ceq}) becomes
\begin{small}
\begin{eqnarray}
\kappa-1+\phi_{p_t,\alpha_t}&=& \sum_{p=1}^m\sum_{s=1}^{n_p}\frac{(V_p-V_{p-1})\zeta_{L,-,s,p}}{V_m-V_0}\frac{u}{u+x_s^{(p)}}+H_{p_t,\alpha_t}(u)
\end{eqnarray}
\end{small}

The expression on the right hand side of (\ref{mk2}) has singular points given by (\ref{sgp1}) and
\begin{eqnarray*}
\{-x_s^{(p)}\}_{p\in[m],s\in[n_p],a_{s,p}=L,b_{s,p}=-}
\end{eqnarray*}

Again under Assumption \ref{ap14}, (1)(2)(3) holds as in the case $M=1$ with (\ref{m1k}) replaced by (\ref{mk2}); moreover,
\begin{enumerate}[label=(\alph*)]
\item Between each pair of nearest points in 
, (\ref{mk2}) has at least one real solution;
\item In the set
\begin{eqnarray*}
\left(-\infty,\min\left\{-\frac{1}{x_s^{(p)}}\right\}_{p\in[p_t],s\in[n_p],a_{s,p}=R,b_{s,p}=+}\right)\cup\left(\qquad
\max\left\{\frac{1}{x_s^{(p)}}\right\}_{p\in[p_t],s\in[n_p],a_{s,p}=L,b_{s,p}=+},\infty\right);
\end{eqnarray*}
(\ref{mk2}) has at least one real solution;
\end{enumerate}

Let $N_1$ be the degree of (\ref{mk2}); items (1)(2)(3)(a)(b) above give at least $N_1-3$ real solutions for (\ref{mk2}); hence (\ref{mk2}) has at most one pair of complex conjugate roots.

By Lemma \ref{l25}, the frozen boundary is given by the equation (\ref{m1k}) has double roots.
When (\ref{m1k}) has double real roots, we have
\begin{small}
\begin{eqnarray}
0&=& \sum_{p=1}^m\sum_{s=1}^{n_p}\frac{(V_p-V_{p-1})\zeta_{L,-,s,p}}{V_m-V_0}\frac{x_s^{(p)}}{\left(u+x_s^{(p)}\right)^2}+G_{p_t,\alpha_t}(u)\label{mk2}
\end{eqnarray}
\end{small}

\begin{example}\label{ep38}Consider the case when
\begin{eqnarray*}
M=2,\qquad m=1,\qquad n_1=3,\qquad V_0=0,\qquad V_1=1.
\end{eqnarray*}
and
\begin{eqnarray*}
(a_1,b_1)=(L,-),\qquad (a_2,b_2)=(R,+),\qquad (a_3,b_3)=(L,+).
\end{eqnarray*}
and
\begin{align*}
x_1^{(1)}=\frac{1}{3},\qquad x_2^{(1)}=\frac{1}{2},\qquad x_3^{(1)}=1;
\end{align*}
\begin{align*}
\phi_{1,\chi}=\frac{1-\chi}{3}
\end{align*}
then the frozen boundary has parametric equation given by
\begin{eqnarray*}
\chi(u)=\frac{\frac{1}{(3u-1)^2}-\frac{1}{(3u+1)^2}}
{\frac{1}{(3u-1)^2}+\frac{2}{3(u+2)^2}+\frac{1}{3(1-u)^2}}
\end{eqnarray*}
and
\begin{small}
\begin{eqnarray*}
&&\kappa(u)=(1-\chi(u))\frac{u}{3u-1}+\chi(u)\left[\frac{1}{3}\frac{u}{u+2}+\frac{1}{3}\frac{u}{1-u}\right]+\frac{u}{3u+1}+\frac{2}{3}+\frac{\chi(u)}{3}.
\end{eqnarray*}
\end{small}
See Figure \ref{fig:e311} for the frozen boundary, and Figure \ref{fig:e311sim} for a simulation.
\begin{figure}
\includegraphics[width=.8\textwidth]{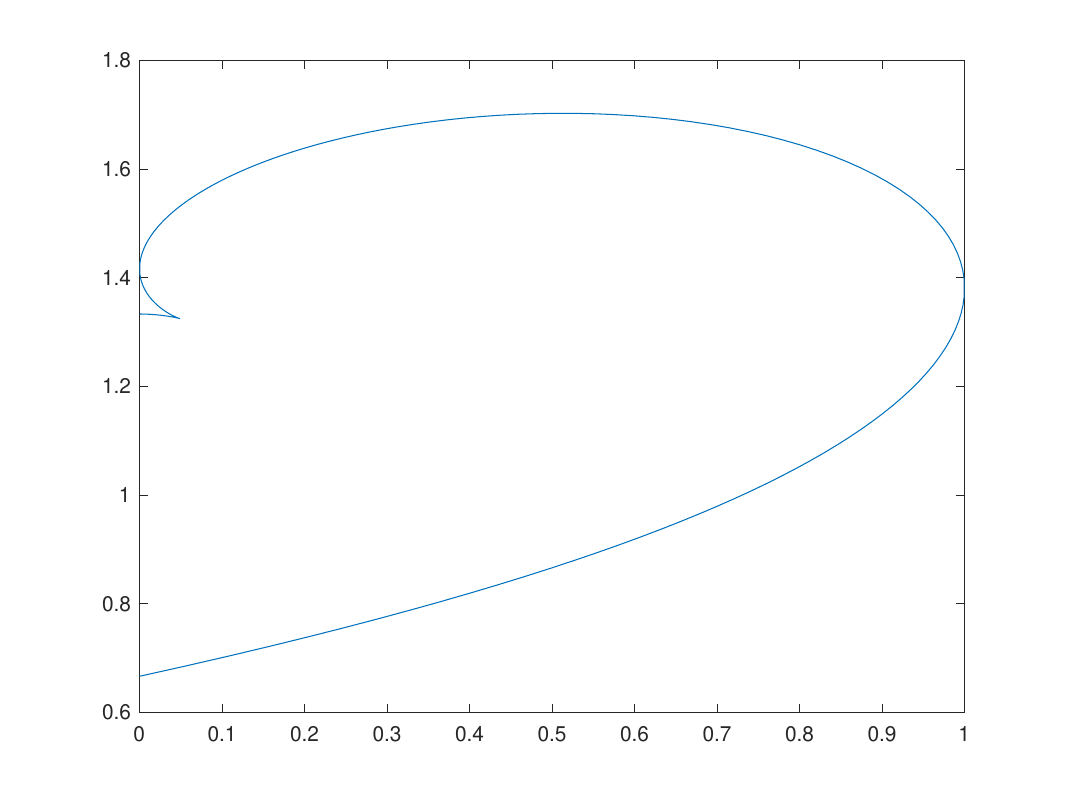}
\caption{Frozen boundary of Example \ref{ep38}}\label{fig:e311}
\end{figure}
\begin{figure}
\includegraphics[width=.8\textwidth]{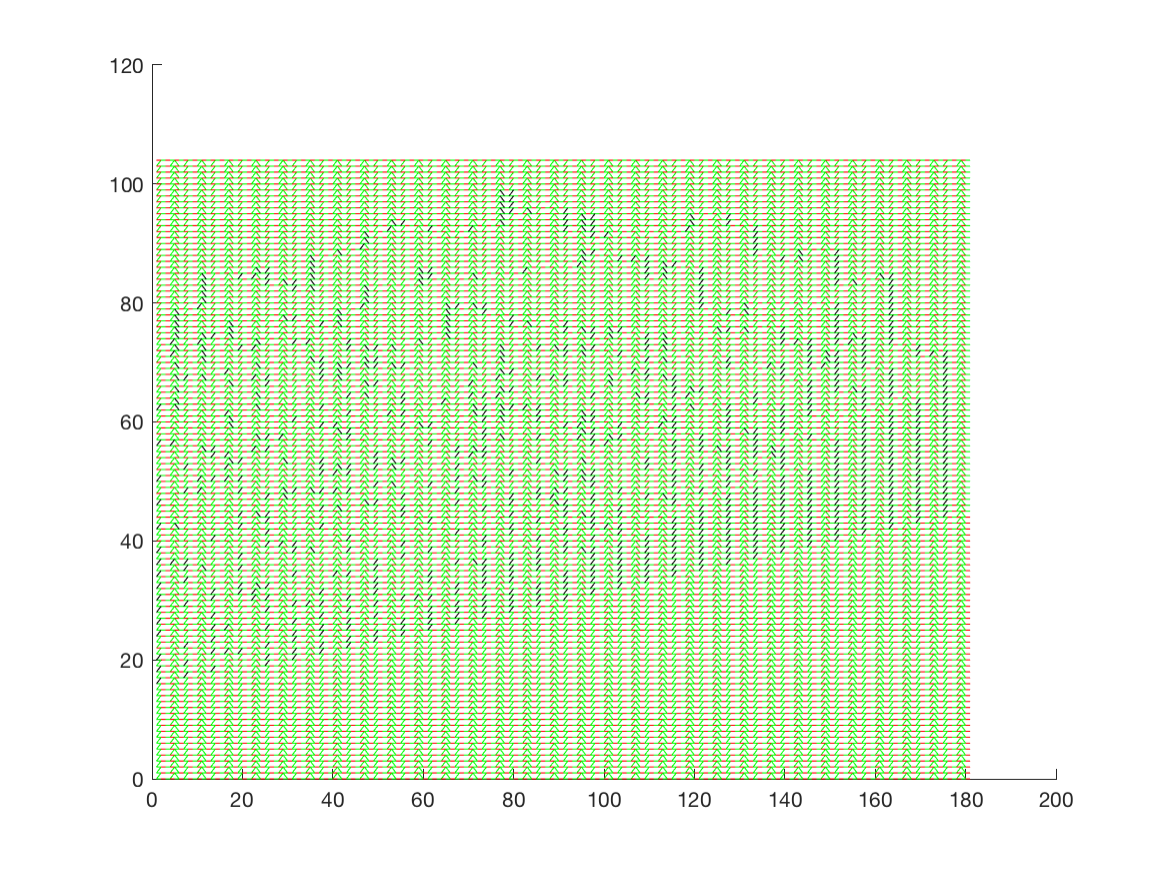}
\caption{Simulation of dimer coverings of Example \ref{ep38}: absent edges are represented by green lines, present horizontal edges are represented by red lines, present diagonal edges are represented by black lines.}\label{fig:e311sim}
\end{figure}
\end{example}

\section{Limit shape for piecewise boundary condition}\label{sect:pw}

In this section,  we obtain the limit shape of dimer coverings on a rail-yard graph whose left boundary condition is given by a piecewise partition with densities either 1 or 0. The main theorem proved in this section is Theorem \ref{t412}.

\subsection{A formula to compute Schur functions}

Let $\lambda(N)\in \YY_N$. Let $\Sigma_N$ be the permutation group of $N$ elements and let $\sigma\in \Sigma_N$. Let 
\begin{eqnarray*}
X=(x_1,\ldots,x_N).
\end{eqnarray*}
Assume that there exists a positive integer $n\in[N]$ such that $x_1,...,x_n$ are pairwise distinct and $\{x_1,...,x_n\}=\{x_1,...,x_N\}$. For $j\in[N]$, let
\begin{eqnarray}
\eta_j^{\sigma}(N)=|\{k:k>j,x_{\sigma(k)}\neq x_{\sigma(j)}\}|.\label{et}
\end{eqnarray}
For $1\leq i\leq n$, let
\begin{eqnarray}
\Phi^{(i,\sigma)}(N)=\{\lambda_j(N)+\eta_j^{\sigma}(N):x_{\sigma(j)}=x_i\}\label{pis}
\end{eqnarray}
and let $\phi^{(i,\sigma)}(N)$ be the partition with length $|\{1\leq j\leq N: x_j=x_i\}|$ obtained by decreasingly ordering all the elements in $\Phi^{(i,\sigma)}(N)$. Let $\Sigma_N^{X}$ be the subgroup $\Sigma_N$ that preserves the value of $X$; more precisely
\begin{eqnarray*}
\Sigma_N^{X}=\{\sigma\in \Sigma_N: x_{\sigma(i)}=x_i,\ \mathrm{for}\ i\in[N]\}.
\end{eqnarray*}
Let $[\Sigma/\Sigma_N^X]^r$ be the collection of all the right cosets of $\Sigma_N^X$ in $\Sigma_N$. More precisely,
\begin{eqnarray*}
[\Sigma/\Sigma_N^X]^r=\{\Sigma_N^X\sigma:\sigma\in \Sigma_N\},
\end{eqnarray*}
where for each $\sigma\in \Sigma_N$
\begin{eqnarray*}
\Sigma_N^X\sigma=\{\xi\sigma:\xi\in \Sigma_N^X\}
\end{eqnarray*}
and $\xi\sigma\in \Sigma_N$ is defined by
\begin{eqnarray*}
\xi\sigma(k)=\xi(\sigma(k)),\ \mathrm{for}\ k\in[N].
\end{eqnarray*}

Let $k\in[N]$. Let
\begin{eqnarray}
\label{dw}w_{i}=\left\{\begin{array}{cc}u_i&\mathrm{if}\ 1\leq i\leq k\\x_i&\mathrm{if}\ k+1\leq i\leq N\end{array}\right.
\end{eqnarray}

%Assume 
%\begin{eqnarray}
%\label{dk}k=qn+r,\qquad \mathrm{where}\ r<n,
%\end{eqnarray}
%and $q,r$ are positive integers.

\begin{proposition}\label{p437}Let $\{w_i\}_{i\in[N]}$ be given by (\ref{dw}). 
Define
\begin{eqnarray*}
\underline{u}^{(i)}=\left\{\frac{u_j}{x_i}:x_j=x_i\right\}
\end{eqnarray*}
Then we have the following formula
\begin{eqnarray}
&&\label{sws}s_{\lambda}(w_1,\ldots,w_N)\\
&=&\sum_{\ol{\sigma}\in[\Sigma_N/\Sigma_N^X]^r} \left(\prod_{i=1}^{n}x_i^{|\phi^{(i,\sigma)}(N)|}\right)\left(\prod_{i=1}^{n}s_{\phi^{(i,\sigma)}(N)}\left(\underline{u}^{(i)},1,\ldots,1\right)\right)\notag\\
&&\times\left(\prod_{i<j,x_{\sigma(i)}\neq x_{\sigma(j)}}\frac{1}{w_{\sigma(i)}-w_{\sigma(j)}}\right)\notag
\end{eqnarray}
where $\sigma\in \ol{\sigma}\cap \Sigma_N$ is a representative. 
\end{proposition}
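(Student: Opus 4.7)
The plan is to derive \eqref{sws} as a grouped form of the Weyl bialternant identity
\[
  s_\lambda(w_1,\ldots,w_N)\cdot V(w_1,\ldots,w_N)=\det\bigl[w_i^{\lambda_j+N-j}\bigr]_{i,j=1}^N,
\]
where $V$ is the Vandermonde; if some $x_i$'s coincide then this should be interpreted as an identity of rational functions extended by continuity from the generic case. The right cosets $\overline{\sigma}\in[\Sigma_N/\Sigma_N^X]^r$ are in natural bijection with the $N!/(m_1!\cdots m_n!)$ ways of assigning the $N$ row positions to the $n$ groups $W_s=\{j:x_j=x_s\}$ of equal $x$-values (with $m_s=|W_s|$), and each coset will produce one summand in \eqref{sws}.

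First I would fix a representative $\sigma$ and reorder the rows of the bialternant numerator accordingly. The permuted Vandermonde factors as
\[
  V(w_{\sigma(1)},\ldots,w_{\sigma(N)}) = V_{\mathrm{cross}}(w^\sigma)\cdot\prod_{i=1}^n V^{(i)}_\sigma,
\]
where $V_{\mathrm{cross}}(w^\sigma)=\prod_{i<j,\,x_{\sigma(i)}\neq x_{\sigma(j)}}(w_{\sigma(i)}-w_{\sigma(j)})$ is the cross-group part (which becomes the displayed denominator of \eqref{sws}) and $V^{(i)}_\sigma$ is the Vandermonde of the $m_i$ variables within block $i$. A Laplace expansion of $\det\bigl[w_{\sigma(i)}^{\lambda_j+N-j}\bigr]$ along the $n$ row blocks induced by $\sigma$ decomposes this determinant into a sum over ordered partitions of the columns $\{1,\ldots,N\}$ into blocks of sizes $(m_1,\ldots,m_n)$, each term being a product of $n$ block minors. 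In block $i$ each variable $w_{\sigma(j)}$ (with $x_{\sigma(j)}=x_i$) is either an indeterminate $u_?$ or a copy of $x_i$; pulling $x_i$ out of each of the $m_i$ rows introduces the prefactor $x_i^{|\phi^{(i,\sigma)}|}$ and converts the block minor into a bialternant in the scaled variables $(\underline{u}^{(i)},1,\ldots,1)$, which by bialternant equals $V^{(i)}_\sigma\cdot s_{\phi^{(i,\sigma)}}(\underline{u}^{(i)},1,\ldots,1)$. The statistic $\eta_j^\sigma$ arises as the between-block part of the original exponent $N-j$ after decomposition into $\eta_j^\sigma+(m_i-r)$, with $m_i-r$ being the standard within-block bialternant offset that determines the partition $\phi^{(i,\sigma)}$.

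Putting these pieces together and dividing both sides by $V(w)$, the within-group factors $V^{(i)}_\sigma$ cancel against the corresponding pieces of the full Vandermonde, leaving only $V_{\mathrm{cross}}(w^\sigma)^{-1}$ in the denominator, which matches \eqref{sws}. The main obstacle will be sign bookkeeping: the signs produced by Laplace expansion, by reordering rows via $\sigma$, and by internal row reorderings in the sub-block Vandermondes must all combine to yield a uniformly positive summand for every coset. One also needs to check that the summand is invariant under the choice of representative within a coset, which reduces to the observation that $\Sigma_N^X$ permutes only $w$-values within a single group and hence preserves the multiset $\underline{u}^{(i)}$, the partition $\phi^{(i,\sigma)}$, and $V_{\mathrm{cross}}(w^\sigma)$. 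Finally, coincident arguments such as the repeated $1$'s in $(\underline{u}^{(i)},1,\ldots,1)$ are handled by the standard continuity extension of Schur polynomials.
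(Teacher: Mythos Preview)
Your approach is correct and is the natural direct argument. The paper itself does not give a self-contained proof here: it simply observes that the statement generalizes Proposition~3.4 of \cite{ZL18} (where all the blocks $W_s=\{j:x_j=x_s\}$ have equal size $N/n$) and states that the same argument goes through. That argument is precisely the bialternant/Laplace-expansion computation you outline, so there is no genuine difference in method.

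One small point of presentation: you phrase the argument as ``fix a representative $\sigma$, reorder the rows, then Laplace-expand along the row blocks induced by $\sigma$.'' It is cleaner to keep the rows fixed, partitioned once and for all into the blocks $W_s$, and to recognize that Laplace expansion of $\det[w_i^{\lambda_j+N-j}]$ along these row blocks produces a sum over ordered column partitions $(C_1,\dots,C_n)$ with $|C_s|=m_s$; these are in bijection with cosets via $C_s=\sigma^{-1}(W_s)$. This makes the bookkeeping of $\eta_j^\sigma=(N-j)-(m_s-r)$ (for $j=j_r$ the $r$th element of $C_s$) transparent and shows directly that the block-$s$ exponents are $\phi^{(s,\sigma)}_r+m_s-r$. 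The sign issue you flag is real but manageable: the cross factor $\prod_{i<j,\,x_{\sigma(i)}\neq x_{\sigma(j)}}(w_{\sigma(i)}-w_{\sigma(j)})$ is symmetric under permutations of the $w$-variables within each block $W_s$ (swapping two such variables permutes the cross factors among themselves), which is exactly the coset-invariance you need; the remaining Laplace sign then matches the sign relating $V(w)$ to $V_{\mathrm{cross}}(w^\sigma)\prod_s V^{(s)}$.
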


\begin{proof}The proposition is a general version of Proposition 3.4 in \cite{ZL18}, where it is required that for each $i\in[n]$ the number of items in $\{x_1,\ldots,x_N\}$ that are equal to $x_i$ is exactly $\frac{N}{n}$. This proposition allows that the number of items equal to different $x_i$'s are different, however, the proof follows from the same arguments as the proof of Proposition 3.4 in \cite{ZL18}.
\end{proof}

\begin{corollary}\label{p438}
\begin{eqnarray}
&&\label{sws1}s_{\lambda}(x_1,\ldots,x_N)\\
&=&\sum_{\ol{\sigma}\in[\Sigma_N/\Sigma_N^X]^r} \left(\prod_{i=1}^{n}x_i^{|\phi^{(i,\sigma)}(N)|}\right)\left(\prod_{i=1}^{n}s_{\phi^{(i,\sigma)}(N)}\left(1,\ldots,1\right)\right)\left(\prod_{i<j,x_{\sigma(i)}\neq x_{\sigma(j)}}\frac{1}{x_{\sigma(i)}-x_{\sigma(j)}}\right)\notag
\end{eqnarray}
where $\sigma\in \ol{\sigma}\cap \Sigma_N$ is a representative. 
\end{corollary}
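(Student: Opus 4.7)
The plan is to derive Corollary \ref{p438} as a direct specialization of Proposition \ref{p437}. Specifically, starting from the identity (\ref{sws}), I would set $u_i = x_i$ for every $i \in [k]$. Under this substitution, the definition (\ref{dw}) gives $w_i = x_i$ for all $i \in [N]$, so the left-hand side $s_\lambda(w_1,\ldots,w_N)$ becomes exactly $s_\lambda(x_1,\ldots,x_N)$, which is the desired left-hand side of (\ref{sws1}).

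On the right-hand side, two simplifications occur simultaneously. First, for each $i \in [n]$, the tuple $\underline{u}^{(i)} = \{u_j/x_i : x_j = x_i\}$ collapses to a tuple of $1$'s, because $u_j/x_i = x_j/x_i = 1$ whenever $x_j = x_i$. Hence $s_{\phi^{(i,\sigma)}(N)}\bigl(\underline{u}^{(i)},1,\ldots,1\bigr)$ reduces to $s_{\phi^{(i,\sigma)}(N)}(1,\ldots,1)$, with the total number of arguments equal to $|\phi^{(i,\sigma)}(N)|$. Second, the factor $\prod_{i<j,\, x_{\sigma(i)} \neq x_{\sigma(j)}}(w_{\sigma(i)}-w_{\sigma(j)})^{-1}$ becomes $\prod_{i<j,\, x_{\sigma(i)} \neq x_{\sigma(j)}}(x_{\sigma(i)}-x_{\sigma(j)})^{-1}$. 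The prefactor $\prod_{i=1}^n x_i^{|\phi^{(i,\sigma)}(N)|}$ and the indexing set $[\Sigma_N/\Sigma_N^X]^r$ are unaffected by the substitution. Combining these, (\ref{sws}) immediately yields (\ref{sws1}).

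Since this is a straightforward substitution, there is no substantive obstacle; the only minor point to verify is that the specialization is regular, i.e.\ that no zero denominators are produced. This is built into the formula, as the product in the denominator is restricted to pairs $(i,j)$ with $x_{\sigma(i)} \neq x_{\sigma(j)}$, so every factor $x_{\sigma(i)} - x_{\sigma(j)}$ remains nonzero. Equivalently, one may view the argument as taking the limit $u_i \to x_i$ inside the polynomial expression in (\ref{sws}) and invoking continuity.
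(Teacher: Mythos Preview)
Your proposal is correct and takes exactly the same approach as the paper, which proves the corollary in one sentence by letting $u_i=x_i$ for all $i\in[k]$ in Proposition~\ref{p437}. One minor wording slip: the total number of arguments in $s_{\phi^{(i,\sigma)}(N)}(1,\ldots,1)$ is the length $|\{1\le j\le N:x_j=x_i\}|$ of the partition, not its size $|\phi^{(i,\sigma)}(N)|$, but this does not affect the argument.
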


\begin{proof}The corollary follows from Proposition \ref{p437} by letting $u_i=x_i$ for all $i\in[k]$.
\end{proof}

\subsection{Limit shape of rail-yard graphs with piecewise boundary condition}

Suppose that Assumption \ref{ap41} holds. For each $\epsilon>0$, define
\begin{eqnarray*}
N_{L,-}^{(\epsilon)}:=\left|\left\{j\in\left[l^{(\epsilon)}..r^{(\epsilon)}\right]:a_j^{(\epsilon)}=L,\ b_j^{(\epsilon)}=-\right\}\right|.
\end{eqnarray*}

Let $\pi$ be the bijection from $\left[N_{L,-}^{(\epsilon)}\right]$ to $\{j\in[l^{(\epsilon)}..r^{(\epsilon)}]:a_j^{(\epsilon)}=L,b_j^{(\epsilon)}=-\}$, such that
\begin{itemize}
    \item for any $i,j\in\left[N_{L,-}^{(\epsilon)}\right]$ and $i<j$, $\pi(i)<\pi(j)$.
\end{itemize}

Let $\Sigma_{L,-,\epsilon}$ be the group consisting of all the bijections from $\{j\in[l^{(\epsilon)}..r^{(\epsilon)}]:a_j^{(\epsilon)}=L,b_j^{(\epsilon)}=-\}$ to itself.

Let $\Sigma_{L,-,\epsilon}^X$ be a subgroup of $\Sigma_{L,-,\epsilon}$ preserving the values of $x_i$'s, i.e.
\begin{eqnarray*}
\Sigma_{L,-,\epsilon}^X:=\{\si\in \Sigma_{L,-,\epsilon}:x_{\si(i)}=x_i,\ \forall\ i\in\{j\in[l^{(\epsilon)}..r^{(\epsilon)}]:a_j^{(\epsilon)}=L,b_j^{(\epsilon)}=-\}\}.
\end{eqnarray*}

Note that $\pi\Sigma_{L,-,\epsilon}$, defined by
\begin{eqnarray*}
\pi\Sigma_{L,-,\epsilon}:=\{\pi\xi:\xi\in \Sigma_{L,-,\epsilon}\},
\end{eqnarray*}
consists of all the bijections from $\left[N_{L,-}^{(\epsilon)}\right]$ to $\{j\in[l^{(\epsilon)}..r^{(\epsilon)}]:a_j^{(\epsilon)}=L,b_j^{(\epsilon)}=-\}$.
%Let $\pi\Sigma_{L,-,\epsilon}^X$ be a subset of $\pi\Sigma_{L,-,\epsilon}$ defined as follows:
%\begin{eqnarray*}
%\pi\Sigma_{L,-,\epsilon}^X:=\{\pi\xi:\xi\in \Sigma_{L,-,\epsilon}^X\},
%\end{eqnarray*}
Let $\sigma_0\in\pi\Sigma_{L,-,\epsilon}$ such that
\begin{eqnarray}
x_{\si_0(1)}\geq x_{\si_0(2)}\geq\ldots\geq x_{\si_0\left(N_{L,-}^{(\epsilon)}\right)}.\label{sz}
\end{eqnarray}

\begin{assumption}\label{ap428}Suppose Assumption \ref{ap41} holds. Let $s\in\left[N_{L,-}^{(\epsilon)}\right]$ such that $s$ is independent of $\epsilon$ as $\epsilon\rightarrow 0$.
Assume there exist positive integers $K_1^{(\epsilon)},K_2^{(\epsilon)},\ldots K_s^{(\epsilon)}$, such that 
\begin{enumerate}
\item $\sum_{t=1}^s K_t^{(\epsilon)}=N_{L,-}^{(\epsilon)}$;
\item Assume the partition corresponding to the left boundary condition is $\lambda^{(l,\epsilon)}$ is given by
\begin{eqnarray*}
\lambda^{(l,\epsilon)}=\left(\lambda_1,\ldots,\lambda_{N_{L,-}^{(\epsilon)}}\right).
\end{eqnarray*}
such that
\begin{eqnarray}
\mu_1^{(\epsilon)}>\ldots>\mu_s^{(\epsilon)}\label{mi}
\end{eqnarray}
are all the distinct elements in $\left\{\lambda_1,\lambda_2,\ldots,\lambda_{N_{L,-}^{(\epsilon)}}\right\}$.
\item
\begin{eqnarray*}
&&\lambda_1=\lambda_2=\ldots=\lambda_{K_s^{(\epsilon)}}=\mu_1^{(\epsilon)};\\
&&\lambda_{K_s^{(\epsilon)}+1}=\lambda_{K_s^{(\epsilon)}+2}=\ldots=\lambda_{K_s^{(\epsilon)}+K_{s-1}^{(\epsilon)}}=\mu_2^{(\epsilon)};\\
&&\ldots\\
&&\lambda_{1+\sum_{t=2}^{s}K_t^{(\epsilon)}}=\lambda_{2+\sum_{t=2}^{s}K_t^{(\epsilon)}}=\ldots=\lambda_{\sum_{t=1}^{s}K_t^{(\epsilon)}}=\mu_s^{(\epsilon)};
\end{eqnarray*}
 \item For $p\in[m]$ and $i\in[n_p]$, let 
\begin{eqnarray}
J_{p,i}=\{t\in[s]:\exists q\in\left[N_{L,-}^{(\epsilon)}\right],\ \mathrm{s.t.}\ x_{\sigma_0(q)}=x_i^{(p)},\mathrm{and}\ \lambda_q=\mu_t^{(\epsilon)}\}\label{ji}
\end{eqnarray}
such that
\begin{enumerate}
\item For any $p_1,p_2\in[m]$ and $i_1\in[n_{p_1}]$, $i_2\in [n_{p_2}]$ with $x_{i_1}^{(p_1)}> x_{i_2}^{(p_2)}$, 
if $\ell\in J_{p_1,i_1}$, and $t\in J_{p_2,i_2}$, then $\ell<t$.
\item For any $p,q$ satisfying $1\leq p\leq s$ and $1\leq q\leq s$, and $q>p$
\begin{eqnarray*}
C_1N_{L,-}^{(\epsilon)} \leq \mu_p^{(\epsilon)}-\mu_q^{(\epsilon)}\leq C_2N_{L,-}^{(\epsilon)}
\end{eqnarray*}
where $C_1$, $C_2$ are sufficiently large constants independent of $\epsilon$ as $\epsilon\rightarrow 0$.
\end{enumerate}
\end{enumerate}
\end{assumption}

\begin{assumption}\label{ap32}Suppose that Assumption \ref{ap41} holds. For $i\in[l^{(\epsilon)}..r^{(\epsilon)}]$, suppose the weights $x_i$ change with respect to $\epsilon$, and denote it by $x_{i,\epsilon}$. Let $\sigma_0$ be defined as in (\ref{sz}). More precisely, we order elements in the set
\begin{eqnarray}
\{x_{i,\epsilon}\}_{p\in[m],i\in[n_p],a_{i,p}=L,b_{i,p}=-}\label{wl-}
\end{eqnarray}
by
\begin{eqnarray*}
x_{\sigma_0(1),\epsilon}\geq x_{\sigma_0(2),\epsilon}\geq \ldots\geq x_{\sigma_0(N_{L,-}^{(\epsilon)}),\epsilon}
\end{eqnarray*}
Assume 
\begin{eqnarray*}
x_{\sigma_0(1),\epsilon}=x_{\sigma_0(1)}>0
\end{eqnarray*}
i.e., the maximal weight in (\ref{wl-}) is a positive constant independent of $\epsilon$.

Suppose that Assumption \ref{ap428} holds. Moreover, assume that
 \begin{eqnarray*}
\liminf_{\epsilon\rightarrow 0} \epsilon\log\left(\min_{x_{i,\epsilon}>x_{j,\epsilon}}\frac{x_{i,\epsilon}}{x_{j,\epsilon}}\right)\geq \alpha>0,
\end{eqnarray*}
where $\alpha$ is a sufficiently large positive constant independent of $\epsilon$.
\end{assumption}

It is convenient to define the index sets $J_i$'s as follows.
\begin{definition}\label{df34}For $p\in[m]$ and $i\in[n_p]$, let $J_{p,i}$ be defined as in (\ref{ji}). Assume
\begin{eqnarray*}
\{J_{p,i}\}_{p\in[m],i\in[n_p],a_{i,p}=L,b_{i,p}=-}=\{J_k\}_{k\in [I]}
\end{eqnarray*}
where $I\in \NN$ is the total number of distinct elements in $\{J_{p,i}\}_{p\in[m],i\in[n_p]}$, such that 
\begin{itemize}
    \item If $J_{k_1}=J_{p_1,i_1}$, $J_{k_2}=J_{p_2,i_2}$ and $k_1<k_2$, then
    \begin{eqnarray*}
    x_{i_1}^{(p_1)}>x_{i_2}^{(p_2)}.
    \end{eqnarray*}
\end{itemize}
In other words, we order all the distinct index set in $\{J_{p,i}\}_{p\in[m],i\in[n_p],a_{i,p}=L,b_{i,p}=-}$ by the corresponding $x_i^{(p)}$'s, and then relabel them from the largest $x_i^{(p)}$ to the least $x_i^{(p)}$. This way we also define a mapping $\psi$ from $\{(p,i)\}_{p\in[m],i\in[n_p],a_{i,p}=L,b_{i,p}=-}$ to $[I]$ such that
\begin{eqnarray*}
\psi(p,i)=k
\end{eqnarray*}
if and only if
\begin{eqnarray*}
J_{p,i}=J_k.
\end{eqnarray*}
Note that the mapping $\psi$ is surjective but not necessarily injective. In particular if there exist $a_{i_1,p_1}=a_{i_2,p_2}=L$, $b_{i_1,p_1}=b_{i_2,p_2}=-$, $(p_1,i_1)\neq (p_2,i_2)$ but $x_{i_1}^{(p_1)}=x_{i_2}^{(p_2)}$, then $\psi(p_1,i_1)=\psi(p_2,i_2)$.
\end{definition}

For $N_{L,-}^{(\epsilon)}\geq 1$, let $\lambda^{(l)}\in \YY_{N_{L,-}^{(\epsilon)}}$ be the left boundary partition satisfying Assumption \ref{ap428}.  Let
\begin{eqnarray*}
\Omega=\left(\Omega_1<\Omega_2<\ldots<\Omega_{N_{L,-}^{(\epsilon)}}\right)=\left(\lambda_{N_{L,-}^{(\epsilon)}}^{(l)}-N_{L,-}^{(\epsilon)},\lambda_{{N_{L,-}^{(\epsilon)}}-1}^{(l)}-N_{L,-}^{(\epsilon)}+1,\ldots,\lambda_1^{(l)}-1\right)
\end{eqnarray*}
Indeed, $\Omega_1,\ldots,\Omega_{N_{L,-}^{(\epsilon)}}$ are the  locations of the $N_{L,-}^{(\epsilon)}$ topmost particles on the left boundary of the rail-yard graph, assuming that the vertical coordinate of the $N_{L,-}^{(\epsilon)}$ highest particles is $-N_{L,-}^{(\epsilon)}$ when the left boundary condition is given by the empty partition.
Under Assumption \ref{ap428}, we may assume
\begin{eqnarray}
\Omega&=&(A_1,A_1+1,\ldots, B_1-1,B_1,\label{abt}\\
&&A_2,A_2+1,\ldots,B_2-1,B_2,\ldots,A_s,A_s+1,\ldots,B_s-1,B_s).\notag
\end{eqnarray}
where 
\begin{eqnarray*}
B_i-A_i+1=K_i.
\end{eqnarray*}
and
\begin{eqnarray*}
\sum_{i=1}^{s}(B_i-A_i+1)=N_{L,-}^{(\epsilon)}.
\end{eqnarray*}
Suppose as $\epsilon\rightarrow 0$,
\begin{eqnarray}
A_i=a_iN_{L,-}^{(\epsilon)}+o(N_{L,-}^{(\epsilon)}),\qquad B_i=b_iN_{L,-}^{(\epsilon)}+o(N_{L,-}^{(\epsilon)}),\qquad \mathrm{for}\ i\in[s],\label{abi}
\end{eqnarray}
and $a_1<b_1<\ldots<a_s<b_s$ are fixed parameters independent of $\epsilon$ and satisfy $\sum_{i=1}^{s}(b_i-a_i)=1$. Under Assumption \ref{ap428}, it is straightforward to check that for $i\in[s]$
\begin{eqnarray*}
b_i&=&\lim_{N\rightarrow\infty}\frac{\mu_{s-i+1}+\sum_{t=1}^{i}K_t}{N_{L,-}^{(\epsilon)}}-1\\
a_i&=&\lim_{N\rightarrow\infty}\frac{\mu_{s-i+1}+\sum_{t=1}^{i-1}K_t}{N_{L,-}^{(\epsilon)}}-1
\end{eqnarray*}

Then we have the following lemmas.

\begin{lemma}\label{cm1}Let $\{J_i\}_{i\in[I]}$ be defined as in Definition \ref{df34}.
Under Assumption \ref{ap428}, for $i\in[I]$, we have
\begin{eqnarray*}
\phi^{(i,\sigma_0)}\left(N_{L,-}^{(\epsilon)}\right)\in \YY_{N_{L,-,i}^{(\epsilon)}}
\end{eqnarray*}
where
\begin{eqnarray*}
N_{L,-,i}^{(\epsilon)}&=&\sum_{(p,j)\in \psi^{-1}(i)}\frac{v_p^{(\epsilon)}-v_{p-1}^{(\epsilon)}}{n_p}\\
&=&|k\in[l^{(\epsilon)}..r^{(\epsilon)}]:a_k=L,b_k=-,x_k=x_j^{(p)}\ \mathrm{for\ some}\ p\in[m],j\in[n_p],\psi(p,j)=i|
\end{eqnarray*}
\end{lemma}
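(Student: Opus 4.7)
The plan is to unfold the two-level re-indexing that sits between the definition of $\phi^{(i,\sigma)}$ (Proposition \ref{p437}) and the labeling of the distinct weight classes in Definition \ref{df34}, then finish with a counting argument that uses the $n_p$-periodic structure in each block guaranteed by Assumption \ref{ap41}(2).

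First I would observe that by Definition \ref{df34}, each label $i\in[I]$ corresponds to a single common weight value, say $x_{\star}^{(i)}$, characterized by the fact that $x_j^{(p)}=x_{\star}^{(i)}$ for every $(p,j)\in\psi^{-1}(i)$ with $a_{j,p}=L$, $b_{j,p}=-$, and these are all such $(p,j)$ pairs giving this weight. Recalling the definitions \eqref{et} and \eqref{pis}, the partition $\phi^{(i,\sigma_0)}(N_{L,-}^{(\epsilon)})$ has length equal to the cardinality of the set
\begin{equation*}
\bigl\{\,j\in\bigl[N_{L,-}^{(\epsilon)}\bigr]\;:\;x_{\sigma_0(j)}=x_{\star}^{(i)}\,\bigr\}.
\end{equation*}
Because $\sigma_0\in\pi\Sigma_{L,-,\epsilon}$ is a bijection onto $\{k\in[l^{(\epsilon)}..r^{(\epsilon)}]:a_k^{(\epsilon)}=L,\,b_k^{(\epsilon)}=-\}$, this cardinality equals the number of $(L,-)$ column indices $k$ in $[l^{(\epsilon)}..r^{(\epsilon)}]$ with $x_k=x_{\star}^{(i)}$, i.e.\ exactly the right-hand description of $N_{L,-,i}^{(\epsilon)}$ appearing in the statement.

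Next I would carry out the count by decomposing the index set $[l^{(\epsilon)}..r^{(\epsilon)}]$ into the blocks $(v_{p-1}^{(\epsilon)},v_{p}^{(\epsilon)}]$ for $p\in[m]$. Assumption \ref{ap41}(2) says that within block $p$ the triple $(a_k^{(\epsilon)},b_k^{(\epsilon)},x_k)$ is determined by the residue class $[k\bmod n_p]\in[n_p]$, and that each residue class contains exactly $(v_p^{(\epsilon)}-v_{p-1}^{(\epsilon)})/n_p$ indices. Hence, for each $(p,j)\in\psi^{-1}(i)$ (so $a_{j,p}=L$, $b_{j,p}=-$, $x_j^{(p)}=x_{\star}^{(i)}$), the residue class $j$ contributes precisely $(v_p^{(\epsilon)}-v_{p-1}^{(\epsilon)})/n_p$ indices $k$ with $a_k^{(\epsilon)}=L$, $b_k^{(\epsilon)}=-$, $x_k=x_{\star}^{(i)}$. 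Summing over all $(p,j)\in\psi^{-1}(i)$ gives the first displayed formula for $N_{L,-,i}^{(\epsilon)}$, and by the characterization of $x_{\star}^{(i)}$ this equals the second (cardinality) expression as well.

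The main obstacle here is purely bookkeeping: one has to keep straight that $\sigma_0$ reorders by weight magnitude, that $\psi$ collapses $(p,j)$ pairs having the same weight into a single class label $i$, and that the ``length'' in the definition of $\phi^{(i,\sigma_0)}$ is counted with multiplicity among the equal-weight indices rather than among the distinct $(p,j)$ pairs. Once these identifications are made explicit and combined with the periodicity count in each block, the claim follows without any further analysis.
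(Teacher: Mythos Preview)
Your argument is correct and is precisely the definitional unpacking the statement requires. The paper does not supply a proof for this lemma at all; it is stated and immediately used, so your write-up in fact fills a gap. One small point worth making explicit: your sentence ``each residue class contains exactly $(v_p^{(\epsilon)}-v_{p-1}^{(\epsilon)})/n_p$ indices'' tacitly assumes $n_p\mid(v_p^{(\epsilon)}-v_{p-1}^{(\epsilon)})$, which is not stated in Assumption~\ref{ap41}(2) but is implicit in the very formula the lemma asserts, so this is a harmless assumption shared with the paper.
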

Note that
\begin{eqnarray*}
\sum_{i\in[I]}N_{L,-,i}^{(\epsilon)}=N_{L,-}^{(\epsilon)}.
\end{eqnarray*}
\begin{assumption}\label{ap36}
Suppose that for each $i\in[I]$,
\begin{eqnarray*}
\lim_{\epsilon\rightarrow 0}\frac{N_{L,-,i}^{(\epsilon)}}{N_{L,-}^{(\epsilon)}}=\theta_i;\qquad\mathrm{and}\qquad 
\lim_{\epsilon\rightarrow 0}\frac{N_{L,-}^{(\epsilon)}}{N^{(\epsilon)}}=\rho;
\end{eqnarray*}
where $N^{(\epsilon)}=|r^{(\epsilon)}-l^{(\epsilon)}|.$
\end{assumption}

\begin{lemma}\label{cm}Let $\{J_i\}_{i\in[I]}$ be defined as in Definition \ref{df34}.
Under Assumptions \ref{ap428} \ref{ap36}, assume that 
\begin{eqnarray*}
J_i=\left\{\begin{array}{cc}\{d_i,d_i+1,\ldots,d_{i+1}-1\}& \mathrm{if}\ 1\leq i\leq I-1\\ \{d_I, d_I+1,\ldots,s\}& \mathrm{if}\ i=I \end{array}\right.
\end{eqnarray*}
where $d_1,\ldots,d_I$ are positive integers satisfying
\begin{eqnarray*}
1=d_1<d_2<\ldots <d_I\leq s
\end{eqnarray*}
Let
\begin{eqnarray*}
d_{I+1}:=s+1.
\end{eqnarray*}
For $j\in[s]$, let $a_j,b_j$ be given by (\ref{abi}). 
If $i\in[I]$, for $0\leq k\leq d_{i+1}-d_i-1$, let
\begin{small}
\begin{eqnarray}
\beta_{i,k}%&=&\sum_{j=i+1}^{I-1}\frac{N_{L,-}^{(\epsilon)}}{N_{L,-,j}^{(\epsilon)}}\sum_{l=s-d_{j+1}+2}^{s-d_j+1}(a_l-b_{l-1})+\frac{N_{L,-}^{(\epsilon)}}{N_{L,-,i}^{(\epsilon)}}\sum_{l=s-d_{i+1}+2}^{s-d_i-k+1}(a_l-b_{l-1})\label{btik}\\&&
%&=&\frac{N_{L,-}^{(\epsilon)}}{N_{L,-,i}^{(\epsilon)}}\sum_{l=2}^{s-d_i-k+1}(a_l-b_{l-1})
%+I-i+1
%-\frac{N_{L,-}^{(\epsilon)}}{N_{L,-,i}^{(\epsilon)}}\sum_{l=s-d_i-k+1}^{s-d_i+1}(b_l-a_l)\notag\\
%&=&\frac{N_{L,-}^{(\epsilon)}}{N_{L,-,i}^{(\epsilon)}}\sum_{l=2}^{s-d_i-k+1}(a_l-b_{l-1})
%+\frac{N_{L,-}^{(\epsilon)}}{N_{L,-,i}^{(\epsilon)}}\sum_{l=1}^{s-d_i-k}(b_l-a_l)\label{dbik}\\
&=&\frac{a_{s-d_i-k+1}-a_1}{\theta_i}\label{dbik}
\end{eqnarray}
and let
\begin{eqnarray}
\gamma_{i,k}&=&\beta_{i,k}+\frac{b_{s-d_i-k+1}-a_{s-d_i-k+1}}{\theta_i}
=\frac{b_{s-d_i-k+1}-a_1}{\theta_i}\label{dcik}
\end{eqnarray}
\end{small}

Then for $i\in[I]$ the counting measures of  $\phi^{(i,\si_0)}\left(N_{L,-}^{(\epsilon)}\right)$ converge weakly to a limit measure $\bm_i$ as $\epsilon\rightarrow 0$. Moreover,
if $i\in[I]$, for $0\leq k\leq d_{i+1}-d_i-1$, the limit counting measure $\mathbf{m}_i$ is a probability measure on $[\beta_{i,1},\gamma_{i,d_{i+1}-d_i-1}]$ with density given by
\begin{eqnarray*}
\frac{d\mathbf{m}_i}{dy}=\left\{\begin{array}{cc}1,& \mathrm{if}\ \beta_{i,k}< y< \gamma_{i,k};\\0,& \mathrm{if}\ \gamma_{i,k}\leq y\leq \beta_{i,k+1}. \end{array}\right.
\end{eqnarray*}
\end{lemma}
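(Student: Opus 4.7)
The plan is to compute $\phi^{(i,\sigma_0)}(N_{L,-}^{(\epsilon)})$ explicitly as a piecewise-constant partition, read off its particle positions $\phi^{(i,\sigma_0)}_k + N_{L,-,i}^{(\epsilon)} - k$, and then pass to the limit $\epsilon\to 0$. The key structural observation is that because $\sigma_0$ sorts the $x$'s weakly decreasingly, the quantity $\eta_j^{\sigma_0}(N_{L,-}^{(\epsilon)})$ depends only on the $x$-class of $x_{\sigma_0(j)}$: when $x_{\sigma_0(j)}$ lies in the $i$-th largest distinct class (per Definition \ref{df34}), we have $\eta_j^{\sigma_0}=\tau_i:=\sum_{\ell>i}N_{L,-,\ell}^{(\epsilon)}$. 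The indices contributing to $\Phi^{(i,\sigma_0)}$ therefore form the contiguous block $j\in[S_{i-1}+1,S_i]$ with $S_i:=\sum_{\ell\le i}N_{L,-,\ell}^{(\epsilon)}$, and every element of $\Phi^{(i,\sigma_0)}$ is obtained by shifting the corresponding $\lambda_j$ by the single constant $\tau_i$.

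Next I would use Assumption \ref{ap428}(4a) together with the definition of $J_i$ to align these $\sigma_0$-blocks with the piecewise-constant plateaus of $\lambda^{(l)}$. A short calculation based on Lemma \ref{cm1} and the identity $\bigcup_{\ell\le i}J_\ell=\{1,2,\ldots,d_{i+1}-1\}$ yields $S_i=T_{d_{i+1}-1}$, where $T_t:=\sum_{r=1}^{t}K_{s-r+1}$ is the position of the $t$-th plateau endpoint of $\lambda^{(l)}$; consequently the $j$'s in the $i$-th $\sigma_0$-block are exactly those with $\lambda_j\in\{\mu_t:t\in J_i\}$. After decreasing sort, this makes $\phi^{(i,\sigma_0)}(N_{L,-}^{(\epsilon)})$ the length-$N_{L,-,i}^{(\epsilon)}$ partition whose first $K_{s-d_i+1}$ parts equal $\mu_{d_i}+\tau_i$, the next $K_{s-d_i}$ parts equal $\mu_{d_i+1}+\tau_i$, and so on, up to the last $K_{s-d_{i+1}+2}$ parts equalling $\mu_{d_{i+1}-1}+\tau_i$.

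From this explicit form the particle positions follow directly: on the $k'$-th segment ($k'=0,1,\ldots,d_{i+1}-d_i-1$) the values $\phi^{(i,\sigma_0)}_k+N_{L,-,i}^{(\epsilon)}-k$ run through $K_{s-d_i-k'+1}$ consecutive integers, while the strict inequality $\mu_{d_i+k'}>\mu_{d_i+k'+1}$ guarantees an integer gap between consecutive segments. Hence the counting measure $m(\phi^{(i,\sigma_0)})$ is uniform on each of $d_{i+1}-d_i$ disjoint integer intervals, of lengths $K_{s-d_i-k'+1}$ for $k'=0,\ldots,d_{i+1}-d_i-1$, with zero mass on the gaps in between.

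Finally, divide by $N_{L,-,i}^{(\epsilon)}$ and pass to the limit using the scalings $A_j/N_{L,-}^{(\epsilon)}\to a_j$, $B_j/N_{L,-}^{(\epsilon)}\to b_j$, $K_j/N_{L,-}^{(\epsilon)}\to b_j-a_j$, and $N_{L,-,i}^{(\epsilon)}/N_{L,-}^{(\epsilon)}\to\theta_i$ from Assumptions \ref{ap428} and \ref{ap36}, together with the identifications $A_{s-t+1}=\mu_t-T_t$ and $B_{s-t+1}=\mu_t-T_{t-1}-1$ obtained by matching the $\Omega$-description against the piecewise form of $\lambda^{(l)}$. A routine calculation then identifies the rescaled endpoints of the $k'$-th segment with $\beta_{i,k'}$ and $\gamma_{i,k'}$ of \eqref{dbik}--\eqref{dcik}, while the rescaled segment length matches $(b_{s-d_i-k'+1}-a_{s-d_i-k'+1})/\theta_i=\gamma_{i,k'}-\beta_{i,k'}$, giving precisely the stated unit density. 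The only real obstacle is clean bookkeeping of the several index reindexings ($t\leftrightarrow s-t+1$, $k'\leftrightarrow s-d_i-k'+1$, etc.); no analytical input beyond the assumed convergences is required.
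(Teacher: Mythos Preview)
Your approach is correct. The paper does not supply a proof of Lemma~\ref{cm}, treating it as a direct computation from the definitions; your argument---identifying $\eta_j^{\sigma_0}=\tau_i$ as constant on each $\sigma_0$-block, aligning $S_i=T_{d_{i+1}-1}$ via Assumption~\ref{ap428}(4a), reading off the piecewise-constant form of $\phi^{(i,\sigma_0)}$, and passing to the limit using the scalings of Assumption~\ref{ap36}---is precisely the intended computation.
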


As we have seen in Lemma \ref{cm}, under Assumption \ref{ap428}, as $\epsilon\rightarrow 0$, the counting measures of  $\phi^{(i,\si_0)}(N_{L,-}^{(\epsilon)})$ converges weakly to a limit measure $\bm_i$.

Let
\begin{eqnarray}
H_{\mathbf{m}_i}(u)=\int_{0}^{\ln(u)}R_{\mathbf{m}_i}(t)dt+\ln\left(\frac{\ln(u)}{u-1}\right)\label{hmi}
\end{eqnarray}
and $\mathbf{R}_{\mathbf{m}_i}$ is the Voiculescu R-transform of $\mathbf{m}_i$ given by
\begin{eqnarray*}
R_{\bm_i}=\frac{1}{S_{\bm_i}^{(-1)}(z)}-\frac{1}{z};
\end{eqnarray*}
Where $S_{\bm_i}$ is the moment generating function for $\bm_i$ given by
\begin{eqnarray}
S_{\bm_i}(z)=z+M_1(\bm_i)z^2+M_2(\bm_i)z^3+\ldots;\label{smi}
\end{eqnarray}
$M_k(\bm_i)=\int_{\RR}x^k\bm_i(dx)$; and $S_{\bm_i}^{-1}(z)$ is the inverse series of $S_{\bm_i}(z)$.

\begin{proposition}\label{p436}Suppose Assumptions \ref{ap32}, \ref{ap428} and \ref{ap36} hold, and let $\alpha$ be given as in Assumption \ref{ap32}. For each given $\{a_i,b_i\}_{i=1}^{n}$, when $\alpha$ is sufficiently large, for any $\sigma\notin \ol{\si}_0$ we have
\begin{eqnarray}
\label{gep}&&\left|\frac{\left(\prod_{i=1}^{n}x_{i,\epsilon}^{|\phi^{(i,\sigma_0)}(N_{L,-}^{(\epsilon)})|}\right)\left(\prod_{i=1}^{n}s_{\phi^{(i,\sigma_0)}(N_{L,-}^{(\epsilon)})}(1,\ldots,1)\right)}{\left(\prod_{i=1}^{n}x_{i,\epsilon}^{|\phi^{(i,\sigma)}(N_{L,-}^{(\epsilon)})|}\right)\left(\prod_{i=1}^{n}s_{\phi^{(i,\sigma)}(N_{L,-}^{(\epsilon)})}(1,\ldots,1)\right)
}\right|\\
&&\times\left|\frac{\left(\prod_{i<j,x_{\sigma_0(i),\epsilon}\neq x_{\sigma_0(j),\epsilon}}\frac{1}{x_{\sigma_0(i),\epsilon}-x_{\sigma_0(j),\epsilon}}\right)}{\left(\prod_{i<j,x_{\sigma(i),\epsilon}\neq x_{\sigma(j),\epsilon}}\frac{1}{x_{\sigma(i),\epsilon}-x_{\sigma(j),\epsilon}}\right)}\right|\notag\geq e^{C\epsilon^{-2}}\notag
\end{eqnarray}
where $C>0$ is a constant independent of $\epsilon$ and $\si$, and increases as $\alpha$ increases. Indeed, we have
\begin{eqnarray*}
\lim_{\alpha\rightarrow\infty} C=\infty.
\end{eqnarray*}
\end{proposition}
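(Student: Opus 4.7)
The plan is to take the logarithm of the ratio in \eqref{gep} and show that the monomial factor $\prod_{i=1}^{n} x_{i,\epsilon}^{|\phi^{(i,\sigma)}(N_{L,-}^{(\epsilon)})|}$ produces a gap of order $\alpha\epsilon^{-2}$ between $\sigma_0$ and any other coset representative $\sigma\notin\overline{\sigma}_0$, while the remaining factors contribute only $\alpha$-independent corrections of order $\epsilon^{-2}$. A first observation is that all three factors in \eqref{gep} depend on $\sigma$ only through its coset, since the partitions $\phi^{(i,\sigma)}$, the map $j\mapsto x_{\sigma(j),\epsilon}$, and the shifts $\eta_j^{\sigma}$ are coset-invariant. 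In particular, the absolute value of the Vandermonde factor equals $\prod_{a<b}|x_a-x_b|^{-m_a m_b}$, where $m_a$ is the multiplicity of the weight $x_{a,\epsilon}$; this is intrinsic to the multiset of weights, so the Vandermonde ratio contributes $0$ to the log-ratio.

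For the monomial factor I rewrite
\begin{equation*}
\sum_{i=1}^{n} \bigl|\phi^{(i,\sigma)}(N_{L,-}^{(\epsilon)})\bigr| \log x_{i,\epsilon} = \sum_{j=1}^{N_{L,-}^{(\epsilon)}} (\lambda_j + \eta_j^{\sigma})\log x_{\sigma(j),\epsilon}.
\end{equation*}
Assumption \ref{ap428}(4)(a) says that $\sigma_0$ pairs the blocks $\{j:\lambda_j=\mu_k^{(\epsilon)}\}$ with the weights $x_{\sigma_0(j),\epsilon}$ in matching decreasing order, so the classical rearrangement inequality implies that $\sigma_0$ uniquely maximizes $\sum_j\lambda_j\log x_{\sigma(j),\epsilon}$ among all cosets. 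Any other coset $\overline{\sigma}$ is reached from $\overline{\sigma}_0$ by a sequence of elementary exchanges of pairs $(j,k)$ with $\lambda_j\neq\lambda_k$ and $x_{\sigma_0(j),\epsilon}\neq x_{\sigma_0(k),\epsilon}$; by Assumption \ref{ap428}(4)(b) and Assumption \ref{ap32} each such exchange decreases the sum by at least $(\lambda_j-\lambda_k)(\log x_{\sigma_0(j),\epsilon}-\log x_{\sigma_0(k),\epsilon})\geq c_1 N_{L,-}^{(\epsilon)}\cdot \alpha\epsilon^{-1}\geq c\alpha\epsilon^{-2}$. The contribution of $\eta_j^{\sigma}$ to this sum is coset-invariant within each $\lambda$-block and its cross-block variation is bounded by $C_1\epsilon^{-2}$ with $C_1$ independent of $\alpha$, so it cannot offset the rearrangement gain once $\alpha$ is taken large.

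For the Schur specializations, the hook-content formula $s_{\phi}(1,\ldots,1)=\prod_{(a,b)\in\phi}(N_{L,-,i}^{(\epsilon)}+b-a)/h(a,b)$, together with $N_{L,-,i}^{(\epsilon)}\sim\epsilon^{-1}$ variables, $|\phi^{(i,\sigma)}|\sim\epsilon^{-2}$ cells, and the fact that each factor is $O(1)$, gives $\log s_{\phi^{(i,\sigma)}}(1,\ldots,1)=O(\epsilon^{-2})$ uniformly in $\sigma$ and $\alpha$. Adding the three contributions, the log of the ratio in \eqref{gep} is at least $(c\alpha-C_0)\epsilon^{-2}$ with $\alpha$-independent constants $c,C_0>0$. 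Choosing $\alpha$ sufficiently large that $c\alpha-C_0>0$ yields the bound $e^{C\epsilon^{-2}}$ with $C=c\alpha-C_0\to\infty$ as $\alpha\to\infty$.

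The main obstacle I anticipate is controlling the cross-block variation of $\sum_j\eta_j^{\sigma}\log x_{\sigma(j),\epsilon}$: since $\eta_j^{\sigma}$ depends on the coset through the placement of the equivalence classes, verifying that its variation is $\alpha$-independent requires the block structure of $\lambda^{(l)}$ in Assumption \ref{ap428} together with a careful pairwise comparison of cosets connected by a single elementary exchange, so that the $\eta_j^{\sigma}$-shift can be controlled by the absolute spread of the weights rather than by the ratios that drive the dominant gap.
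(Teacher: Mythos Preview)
The paper states this proposition without proof (it is presumably deferred to the analogous arguments in \cite{ZL18}, in line with the treatment of Proposition~\ref{p437}). Your overall decomposition into Vandermonde, monomial, and Schur-specialisation pieces is the right one, and the rearrangement argument for the $\lambda$-part of the monomial correctly produces the dominant gap of order $\alpha\epsilon^{-2}$; the Vandermonde observation is also correct.

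Two of your justifications, however, do not hold as written. First, your ``main obstacle'' is not one: the $\eta_j^\sigma$-contribution is \emph{not} bounded by an $\alpha$-independent $O(\epsilon^{-2})$, but its sign is favourable. Writing
\[
\sum_j \eta_j^\sigma \log x_{\sigma(j),\epsilon}
= \sum_{a<b}N_aN_b\log y_a \;-\; \sum_{a<b} n_{ba}^\sigma\bigl(\log y_a-\log y_b\bigr),
\]
where $y_1>\cdots>y_I$ are the distinct weights with multiplicities $N_a$, and $n_{ba}^\sigma=|\{j<k:x_{\sigma(j)}=y_b,\ x_{\sigma(k)}=y_a\}|$, one has $n_{ba}^{\sigma_0}=0$ and $n_{ba}^\sigma\ge 0$, so the $\eta$-term only \emph{increases} the gap between $\sigma_0$ and $\sigma$. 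Second, the hook-content factors are not $O(1)$: they range over $[\Omega(\epsilon),O(\epsilon^{-1})]$, and your reasoning yields only $\log s_{\phi^{(i,\sigma)}}(1^{N_i})=O(\epsilon^{-2}\log\epsilon^{-1})$, which would be fatal. The $O(\epsilon^{-2})$ bound is nonetheless true, but one must argue via the Weyl dimension formula and a Stirling-type estimate on $\sum_{i<j}\log\bigl(1+\tfrac{\phi_i-\phi_j}{j-i}\bigr)$, using that the parts of $\phi^{(i,\sigma)}$ are bounded by an $\alpha$-independent multiple of $N_i$ (this multiple depends on $C_2$, $s$ and the $\theta_i$ from Assumptions~\ref{ap428} and~\ref{ap36}). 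With these two repairs your argument goes through.
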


\begin{lemma}\label{l410}Let
 \begin{eqnarray*}
 K\subset\{i\in[l^{(\epsilon)}..r^{(\epsilon)}]:a_i^{(\epsilon)}=L,b_i^{(\epsilon)}=-\}\ \mathrm{s.t.}\ |K|=k.
 \end{eqnarray*} Let
\begin{eqnarray*}
\underline{ux}^{K,\epsilon}:=\{u_ix_{i,\epsilon}:i\in K\}
\end{eqnarray*}
Under Assumptions \ref{ap32}, \ref{ap428} and \ref{ap36}, for each given $\{a_i,b_i\}_{i=1}^{n}$, when $\alpha$ in Assumption \ref{ap32} is sufficiently large, each $u_i$ $i\in K$ are in an open complex neighborhood of $1$, we have
\begin{eqnarray}
&&\lim_{\epsilon\rightarrow 0}\frac{1}{N^{(\epsilon)}}\log \frac{s_{\lambda^{(l,\epsilon)}}(\underline{ux}^{K,\epsilon},\underline{x}^{(L,-)\setminus K,\epsilon})}{s_{\lambda^{(l,\epsilon)}}(\underline{x}^{(L,-),\epsilon})}=\sum_{i\in {K}}[Q_j(u_i)]\label{fc}
\end{eqnarray}
where for $i\in K$, 
\begin{itemize}
\item if $x_i=x_k^{(p)}$ such that $J_{k,p}=J_j$ for some $j\in[I]$, then 
\begin{eqnarray*}
Q_j(u)=\rho\theta_j H_{\mathbf{m}_{j}}(u)-\rho\left[\sum_{g=j+1}^{I}\theta_g\right]\log(u).
\end{eqnarray*}
\end{itemize}
Moreover, the convergence of (\ref{fc}) is uniform when $u_1,\ldots,u_k$ are in an open complex neighborhood of $1$.
\end{lemma}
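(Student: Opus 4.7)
The plan is to expand both Schur functions in the ratio via Proposition \ref{p437} and Corollary \ref{p438}, reduce to the coset of $\sigma_0$ using Proposition \ref{p436}, and then separately handle the block-wise Schur ratios and the Vandermonde-like product.

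Apply Corollary \ref{p438} to $s_{\lambda^{(l,\epsilon)}}(\underline{x}^{(L,-),\epsilon})$ and Proposition \ref{p437} to $s_{\lambda^{(l,\epsilon)}}(\underline{ux}^{K,\epsilon},\underline{x}^{(L,-)\setminus K,\epsilon})$. By Proposition \ref{p436}, once $\alpha$ in Assumption \ref{ap32} is chosen large each coset $\ol{\sigma}\neq\ol{\sigma_0}$ is suppressed by a factor at least $e^{C\epsilon^{-2}}$, with $C\to\infty$ as $\alpha\to\infty$; since $|K|$ is finite and each $u_i$ ranges over a fixed complex neighborhood of $1$, the same suppression holds uniformly for the perturbed numerator. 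Thus, up to a multiplicative $1+o(1)$ after dividing the log by $N^{(\epsilon)}$, the ratio equals the product of the block-wise Schur ratios $\prod_{i\in[I]}s_{\phi^{(i,\sigma_0)}}(\underline{u}^{(i)},1,\dots,1)/s_{\phi^{(i,\sigma_0)}}(1,\dots,1)$ and the Vandermonde ratio
$$
R_V^{(\epsilon)}:=\prod_{\substack{a<b\\ x_{\sigma_0(a),\epsilon}\neq x_{\sigma_0(b),\epsilon}}}\frac{x_{\sigma_0(a),\epsilon}-x_{\sigma_0(b),\epsilon}}{w_{\sigma_0(a)}-w_{\sigma_0(b)}},
$$
the monomial prefactors $\prod_i x_i^{|\phi^{(i,\sigma_0)}|}$ cancelling between numerator and denominator.

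For each $i\in[I]$, Lemmas \ref{cm1} and \ref{cm} identify the length $N_{L,-,i}^{(\epsilon)}$ and the piecewise-$0/1$ limit counting measure $\mathbf{m}_i$ of $\phi^{(i,\sigma_0)}$. The standard Bufetov--Gorin-type asymptotic for normalized Schur polynomials with finitely many perturbed arguments, expressed through the Voiculescu $R$-transform as in definition \eqref{hmi} of $H_{\mathbf{m}_i}$, yields uniformly on a complex neighborhood of $u=1$
$$
\frac{s_{\phi^{(i,\sigma_0)}}(\underline{u}^{(i)},1,\dots,1)}{s_{\phi^{(i,\sigma_0)}}(1,\dots,1)}=\exp\Bigl(N_{L,-,i}^{(\epsilon)}\sum_{k\in K,\,j(k)=i}H_{\mathbf{m}_i}(u_k)+o(N^{(\epsilon)})\Bigr),
$$
where $j(k)$ denotes the block of $x_{k,\epsilon}$. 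Dividing by $N^{(\epsilon)}$ and invoking Assumption \ref{ap36} converts $N_{L,-,i}^{(\epsilon)}/N^{(\epsilon)}$ into $\rho\theta_i$ and produces the first summand $\rho\theta_{j(k)}H_{\mathbf{m}_{j(k)}}(u_k)$ of $Q_{j(k)}(u_k)$. For $R_V^{(\epsilon)}$, pairs with $\sigma_0(a),\sigma_0(b)\notin K$ contribute $1$ and pairs fully inside $K$ contribute $O(1)$; for the remaining pairs, fix $k\in K$ and $l\notin K$ with $x_{l,\epsilon}\neq x_{k,\epsilon}$ and write the factor as $(1-r_l)/(u_k-r_l)$ with $r_l:=x_{l,\epsilon}/x_{k,\epsilon}$. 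Assumption \ref{ap32} drives $r_l\to 0$ exponentially when $j(l)>j(k)$ and $r_l\to\infty$ exponentially when $j(l)<j(k)$, so the factor tends to $1/u_k$ in the first case and to $1$ in the second. Since the number of $l\notin K$ with $j(l)>j(k)$ equals $\sum_{g>j(k)}N_{L,-,g}^{(\epsilon)}$ up to a bounded correction,
$$
\frac{1}{N^{(\epsilon)}}\log R_V^{(\epsilon)}\longrightarrow -\sum_{k\in K}\rho\Bigl(\sum_{g=j(k)+1}^{I}\theta_g\Bigr)\log u_k,
$$
which is the second summand of $Q_{j(k)}(u_k)$. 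Combining this with the Schur-ratio asymptotic and summing over $k\in K$ gives \eqref{fc}; uniformity on a complex neighborhood of $1$ is inherited from the uniformity of each component.

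The main obstacle is the Bufetov--Gorin-type uniform asymptotic invoked in the middle step: the limit measure $\mathbf{m}_i$ is a piecewise $0/1$-density supported on a union of disjoint intervals (Lemma \ref{cm}) rather than a single band, and only a bounded number of arguments are perturbed while the rest remain equal to $1$. The definition \eqref{hmi} of $H_{\mathbf{m}_i}$ via the $R$-transform is tailored precisely to this setting, but the $o(N^{(\epsilon)})$ error must be controlled uniformly in $u$ on an open complex neighborhood of $1$ so as not to interact badly with the $e^{C\epsilon^{-2}}$ coset-suppression from Proposition \ref{p436} used in the first step.
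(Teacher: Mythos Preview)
Your proof is correct and follows essentially the same route as the paper's: expand both Schur functions via Proposition~\ref{p437} and Corollary~\ref{p438}, use Proposition~\ref{p436} (together with the coset-count bound from Stirling's formula, which the paper records explicitly as~\eqref{Nn}) to isolate the $\ol{\sigma_0}$ coset, and then split the surviving ratio into the block-wise Schur ratios and the Vandermonde cross-terms. Your concern in the final paragraph is precisely what the paper packages as Lemma~\ref{p25}: that lemma is the Bufetov--Gorin asymptotic for $s_{\phi^{(i,\sigma_0)}}(\underline{u}^{(i)},1,\dots,1)/s_{\phi^{(i,\sigma_0)}}(1,\dots,1)$ with the uniform $o(N^{(\epsilon)})$ control on a complex neighborhood of $1$, and its hypotheses are verified by Lemmas~\ref{cm1} and~\ref{cm}. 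The paper's own proof is terser---it simply says ``the lemma follows from explicit computations and Lemma~\ref{p25}''---whereas you have unpacked the Vandermonde analysis in detail; your treatment of the cross-factors $(1-r_l)/(u_k-r_l)$ under Assumption~\ref{ap32} is exactly what those ``explicit computations'' amount to.
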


\begin{proof}By Proposition \ref{p437}, both $s_{\lambda^{(l,\epsilon)}}(\underline{ux}^{K,\epsilon},\underline{x}^{(L,-)\setminus K,\epsilon})$ and $s_{\lambda^{(l,\epsilon)}}(\underline{x}^{(L,-),\epsilon})$ can be expressed as a sum of $\left|[\Si_{L,-,\epsilon}/\Si_{L,-,\epsilon}^X]^r\right|$ terms. Under Assumption \ref{ap32}, we have
\begin{eqnarray*}
\left|[\Si_{L,-,\epsilon}/\Si_{L,-,\epsilon}^X]^r\right|=\frac{\left(N_{L,-}^{(\epsilon)}\right)!}{\prod_{i\in [I]}(N_{L,-,i}^{(\epsilon)})!}
\end{eqnarray*}
By Stirling's formula and Assumption \ref{ap36} we obtain
\begin{eqnarray}
\lim_{\epsilon\rightarrow 0}\left|[\Si_{L,-,\epsilon}/\Si_{L,-,\epsilon}^X]^r\right|^{\frac{1}{N_{L,-}^{(\epsilon)}}}=\frac{1}{\prod_{i\in[I]}\left(\theta_i\right)^{\theta_i}}.\label{Nn}
\end{eqnarray}
By Proposition \ref{p436} and (\ref{Nn}),  when $\alpha$ in Assumption \ref{ap32} is sufficiently large,
\begin{eqnarray*}
&&s_{\lambda^{(l,\epsilon)}}(\underline{x}^{(L,-),\epsilon})
=\left(\prod_{i\in[I]}x_i^{|\phi^{(i,\sigma_0)}(N_{L,-}^{(\epsilon)})|}\right)\\
&&\left(\prod_{i\in[I]}s_{\phi^{(i,\sigma_0)}(N_{L,-}^{(\epsilon)})}(1,\ldots,1)\right)\left(\prod_{i,j\in[N_{(L,-)}^{(\epsilon)}], i<j,x_{\sigma_0(i)}\neq x_{\sigma_0(j)}}\frac{1}{x_{\sigma_0(i)}-x_{\sigma_0(j)}}\right)\left(1+e^{-C\epsilon^{-2}}\right)
\end{eqnarray*}
For $i\in[I]$, define
\begin{eqnarray*}
\underline{v}^{(i)}=\left\{u_j:j\in K,\ x_j=x_i\right\}
\end{eqnarray*}
and for $i\in [l^{(\epsilon)},r^{(\epsilon)}]$ satisfying $a_i^{(\epsilon)}=L$ and $b_i^{(\epsilon)}=-$, let
\begin{eqnarray*}
w_i:=\begin{cases}u_ix_i&\mathrm{if}\ i\in K;\\x_i&\mathrm{otherwise}. \end{cases}
\end{eqnarray*}
Then by Proposition \ref{p437},
\begin{eqnarray*}
&&s_{\lambda^{(l,\epsilon)}}(\underline{ux}^{K,\epsilon},\underline{x}^{(L,-)\setminus K,\epsilon})=\left(\prod_{i\in[I]}x_i^{|\phi^{(i,\sigma_0)}(N_{L,-}^{(\epsilon)})|}\right)\\
&&\left(\prod_{i\in[I]}s_{\phi^{(i,\sigma_0)}(N_{L,-}^{(\epsilon)})}(\underline{v}^{(i)},1,\ldots,1)\right)\left(\prod_{i,j\in\left[N_{(L,-)}^{(\epsilon)}\right],i<j,x_{\sigma_0(i)}\neq x_{\sigma_0(j)}}\frac{1}{w_{\sigma_0(i)}-w_{\sigma_0(j)}}\right)\left(1+e^{-C\epsilon^{-2}}\right)
\end{eqnarray*}
Then the lemma follows from explicit computations and Lemma \ref{p25}.
\end{proof}

%\begin{assumption}\label{ap411}
%For each $\epsilon>0$, $i\in[I]$ and $t\in[l^{(\epsilon)}..r^{\epsilon}]$, define
%\begin{eqnarray*}
%N_{L,-,i,<t}^{(\epsilon)}:=|k\in[l^{(\epsilon)}..t]:a_k=L,b_k=-,x_k=x_j^{(p)}\ \mathrm{for\ some}\ p\in[m],j\in[n_p],\psi(p,j)=i|.
%\end{eqnarray*}

%Suppose that for each $i\in[I]$, if
%\begin{eqnarray}
%\lim_{\epsilon\rightarrow 0}\epsilon t=\chi,\label{ltc}
%\end{eqnarray}
%then
%\begin{eqnarray*}
%\lim_{\epsilon\rightarrow %0}\frac{N_{L,-,i,<t}^{(\epsilon)}}{N_{L,-}^{(\epsilon)}}=\theta_{i,<\chi}.
%\end{eqnarray*}
%where $N^{(\epsilon)}=|r^{(\epsilon)}-l^{(\epsilon)}|.$
%\end{assumption}

\begin{lemma}\label{tm2}Assume (\ref{c151}) holds. Let $\rho_{\epsilon}^t$ be the probability measure for the partitions corresponding to dimer configurations adjacent to the column of odd vertices labeled by $2t-1$ in $RYG(l^{(\epsilon)},r^{(\epsilon)},\underline{a}^{(\epsilon)},\underline{b}^{(\epsilon)})$, conditional on the left and right boundary condition $\lambda^{(l)}$ and $\emptyset$, respectively. 
Let $K$, $\underline{ux}^{K,\epsilon}$ and $\underline{x}^{(L,-,>t)\setminus K}$ be defined as in Lemmas \ref{l410} and \ref{l22}, respectively.  Assume $t\in\left[v_{p_t-1}^{(\epsilon)},v_{p_t}^{(\epsilon)}\right]$ such that (\ref{lmt}) holds.
Under Assumptions \ref{ap32}, \ref{ap428} and \ref{ap36}, for each given $\{a_i,b_i\}_{i=1}^{n}$, when $\alpha$ in Assumption \ref{ap32} is sufficiently large, each $u_i$ $i\in K$ are in an open complex neighborhood of $1$, we have
\begin{eqnarray}
&&\lim_{\epsilon\rightarrow 0}\frac{1}{N^{(\epsilon)}}\log
\mathcal{S}_{\rho^t_{\epsilon},\underline{x}^{(L,-,>t)}}\left(\underline{ux}^{K,\epsilon},\underline{x}^{(L,-,>t)\setminus K,\epsilon}\right)=\sum_{i\in {K}}\left[Q_j\left(u_i\right)+R_{p_t,\alpha_t,j}(u_i)\right]\label{fc1}
\end{eqnarray}
where 
\begin{itemize}
\item If $x_i=x_k^{(p)}$ then $J_{k,p}=J_j$ for some $j\in[I]$;
\item If $x_{i,\epsilon}=x_{\sigma_0(1)}$
\begin{small}
\begin{eqnarray*}
R_{p_t,\alpha_t,i}(u)&=&
\sum_{p=1}^{p_t-1}\sum_{s=1}^{n_p}\frac{(V_p-V_{p-1})\zeta_{R,+,s,p}}{V_m-V_0}\log(1+ux_{\si_0(1)}x_{s}^{(p)})\\
&&+\sum_{s=1}^{n_{p_t}}\frac{\alpha_t(V_{p_t}-V_{{p_t}-1})\zeta_{R,+,s,p_t}}{V_m-V_0}\log(1+ux_{\si_0(1)}x_{s}^{(p_t)})\notag\\
&&-\sum_{p=1}^{p_t-1}\sum_{s=1}^{n_p}\frac{(V_p-V_{p-1})\zeta_{L,+,s,p}}{V_m-V_0}\log(1-ux_{\si_0(1)}x_{s}^{(p)})\\
&&-\sum_{s=1}^{n_{p_t}}\frac{\alpha_t(V_{p_t}-V_{{p_t}-1})\zeta_{L,+,s,p_t}}{V_m-V_0}\log(1-ux_{\si_0(1)}x_{s}^{(p_t)}).
\end{eqnarray*}
\end{small}
\item If $x_{i,\epsilon}<x_{\sigma_0(1)}$,
\begin{eqnarray*}
R_{p_t,\alpha_t,i}(u)=0.
\end{eqnarray*}
\item For each $i\in[I]$, $(p(i),j(i))\in \psi^{-1}(i)$.

\end{itemize}
Moreover, the convergence of (\ref{fc1}) is uniform when $u_1,\ldots,u_k$ are in an open complex neighborhood of $1$.
\end{lemma}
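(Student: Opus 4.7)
The plan is to combine Lemma~\ref{lm212} (which expresses the Schur generating function as a ratio of Schur polynomials times a product of $\xi_{ij}/z_{ij}$ factors) with the piecewise Schur ratio asymptotics of Lemma~\ref{l410}. This parallels the proof of Lemma~\ref{l22}, except the closed-form staircase evaluation of $s_{\lambda^{(l)}}$ is replaced by the asymptotics from Lemma~\ref{l410}.

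First I would start from Lemma~\ref{lm212}, making the substitution $u_i\mapsto u_i x_{i,\epsilon}$ for $i\in K$ and $u_j\mapsto x_{j,\epsilon}$ for $j\in\{j\in[t+1..r^{(\epsilon)}]:a_j^{(\epsilon)}=L,\,b_j^{(\epsilon)}=-\}\setminus K$. This yields
\[
\mathcal{S}_{\rho^t_{\epsilon},\underline{x}^{(L,-,>t)}}\bigl(\underline{ux}^{K,\epsilon},\underline{x}^{(L,-,>t)\setminus K,\epsilon}\bigr)
=\frac{s_{\lambda^{(l,\epsilon)}}\bigl(\underline{ux}^{K,\epsilon},\underline{x}^{(L,-)\setminus K,\epsilon}\bigr)}{s_{\lambda^{(l,\epsilon)}}\bigl(\underline{x}^{(L,-),\epsilon}\bigr)}\prod_{i,j}\frac{\xi_{ij}}{z_{ij}},
\]
where the product ranges over $i\in[l^{(\epsilon)}..t]$ with $b_i=+$ and $j\in[t+1..r^{(\epsilon)}]$ with $a_j=L$, $b_j=-$. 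Taking $\frac{1}{N^{(\epsilon)}}\log$ splits the right-hand side into two terms. The Schur ratio contribution converges, by Lemma~\ref{l410}, to $\sum_{i\in K}Q_{j(i)}(u_i)$ uniformly for each $u_i$ in a complex neighborhood of $1$, where $j(i)\in[I]$ is determined by $x_i=x_{k(i)}^{(p(i))}$ and $\psi(p(i),k(i))=j(i)$.

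For the remaining product, observe that $\xi_{ij}/z_{ij}=1$ whenever $j\notin K$, so only $j\in K$ contribute. For $j\in K$ and $i\in[l^{(\epsilon)}..t]$ with $b_i=+$,
\[
\frac{\xi_{ij}}{z_{ij}}=\begin{cases}\dfrac{1+u_j x_{j,\epsilon}x_{i,\epsilon}}{1+x_{j,\epsilon}x_{i,\epsilon}}&\text{if }a_i=R,\\[4pt]\dfrac{1-x_{j,\epsilon}x_{i,\epsilon}}{1-u_j x_{j,\epsilon}x_{i,\epsilon}}&\text{if }a_i=L.\end{cases}
\]
Under Assumption~\ref{ap41}, aggregating over $i$ by type $(\mathbf{a},\mathbf{b},s,p)$ and using the limiting frequencies $\zeta_{\mathbf{a},\mathbf{b},s,p}$, together with $\epsilon v_p^{(\epsilon)}\to V_p$ and the partial-block ratio $\alpha_t$ inside the $p_t$-th phase, gives
\[
\lim_{\epsilon\to 0}\frac{1}{N^{(\epsilon)}}\sum_{i\in[l..t],\,b_i=+}\log\frac{\xi_{ij}}{z_{ij}}=R_{p_t,\alpha_t,j}(u_j)
\]
whenever $x_{j,\epsilon}$ is bounded away from $0$. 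By Assumption~\ref{ap32} only $x_{\sigma_0(1)}$ has this property; for any other $j\in K$ one has $x_{j,\epsilon}\leq e^{-\alpha/\epsilon}x_{\sigma_0(1)}$, and a Taylor expansion yields $\log(\xi_{ij}/z_{ij})=O(x_{j,\epsilon})$ uniformly for $u_j$ near $1$. Since there are $O(1/\epsilon)=O(N^{(\epsilon)})$ terms and $N^{(\epsilon)}\cdot O(e^{-\alpha/\epsilon})/N^{(\epsilon)}\to 0$ super-exponentially, the contribution vanishes, consistent with $R_{p_t,\alpha_t,j}(u_j)\equiv 0$ in this case.

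The main technical obstacle is to justify the uniformity of the two limits on a common complex neighborhood of $1$. For the Schur ratio this follows directly from the last sentence of Lemma~\ref{l410}; for the $\xi/z$ product one combines the uniformity of the Riemann-sum approximation arising from Assumption~\ref{ap41} with a dominated estimate that absorbs the super-exponentially small weights. Adding the two limits yields formula~\eqref{fc1}, completing the proof.
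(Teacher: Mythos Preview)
Your proposal is correct and follows exactly the approach the paper intends: the paper's own proof is a single line, ``The lemma follows from Lemmas~\ref{l410} and~\ref{lm212}'' (the printed reference to \ref{tm2} is evidently a typo for \ref{l410}), and you have faithfully unpacked that line---splitting the Schur generating function from Lemma~\ref{lm212} into the Schur-ratio part (handled by Lemma~\ref{l410}) and the $\xi_{ij}/z_{ij}$ product, then treating the latter via the periodic Riemann-sum limit from Assumption~\ref{ap41} together with the super-exponential decay of the non-maximal $(L,-)$ weights from Assumption~\ref{ap32}. Your justification of uniformity and of the vanishing of $R$ when $x_{j,\epsilon}<x_{\sigma_0(1)}$ is the right level of detail and matches what the one-line proof suppresses.
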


\begin{proof}
The lemma follows from Lemmas \ref{tm2} and \ref{lm212}.
\end{proof}

\begin{theorem}\label{t412}
Let $k\geq 1$ be a positive integer.
Let $\rho^t_{\epsilon}$ be the probability measure for the partitions corresponding to dimer configurations adjacent to the column of odd vertices labeled by $2t-1$ in $RYG(l^{(\epsilon)},r^{(\epsilon)},\underline{a}^{(\epsilon)},\underline{b}^{(\epsilon)})$, conditional on the left and right boundary condition $\lambda^{(l,\epsilon)}$ and $\emptyset$, respectively. Assume $t\in\left[v_{p_t-1}^{(\epsilon)},v_{p_t}^{(\epsilon)}\right]$ such that (\ref{lmt}) holds.
Moreover, assume that
\begin{eqnarray*}
\lim_{\epsilon\rightarrow 0}\epsilon t=\chi.
\end{eqnarray*}
and for $i\in[I]$
\begin{eqnarray*}
&&\lim_{\epsilon\rightarrow 0}
\frac{\left|j\in[t..r^{(\epsilon)}]:a_j^{(\epsilon)}=L,b_j^{(\epsilon)}=-,x_{j,\epsilon}=x_{k}^{(p)},\mathrm{s.t.}\ \psi(p,k)=i\right|}{N^{(\epsilon)}}=\gamma_i\\
&&\lim_{\epsilon\rightarrow 0}
\frac{\left|j\in[t..r^{(\epsilon)}]:a_j^{(\epsilon)}=L,b_j^{(\epsilon)}=-,x_{j,\epsilon}=x_{k}^{(p)},\mathrm{s.t.}\ \psi(p,k)>i.\right|}{N^{(\epsilon)}}=\eta_i
\end{eqnarray*}
Let $\mathbf{m}_{\rho^t_{\epsilon}}$ be the random counting measure on $\RR$ regarding a random partition $\lambda\in \YY_N$ with distribution $\rho_{\epsilon}^t$. Suppose that (\ref{c151}) Assumptions \ref{ap14}, \ref{ap41}, \ref{ap32}, \ref{ap428} and \ref{ap36} hold, for each given $\{a_i,b_i\}_{i=1}^{n}$, 
\begin{eqnarray*}
&&\lim_{\epsilon\rightarrow 0}\int_{\RR}x^{k}\textbf{m}_{\rho_{\epsilon}^t}(dx)=
%=\sum_{i=1}^{n}\frac{1}{2(p+1)\pi \mathbf{i}}\oint_{x_{t+i}}\frac{dz}{z}\left(zQ_{\kappa}'(z)+\sum_{j=1}^{n}\frac{z}{n(z-x_{j})}\right)^{p+1}
\frac{1}{2(k+1)\pi \mathbf{i}}\\
&&\sum_{x_j^{(p)}\in S_{p_t}}
\oint_{C_{1}}\frac{dz}{z}\left[\frac{z\left(Q_{\psi(p,j)}'\left(z\right)+R'_{p_t,\alpha_t,\psi(p,j)}(z)\right)+U_{p_t,\alpha_t,\psi(p,j)}(z)}{\phi_{p_t,\alpha_t}}\right]^{k+1},
\end{eqnarray*}
where 
\begin{itemize}
\item 
\begin{eqnarray*}
U_{p_t,\alpha_t,i}(z)&=&\frac{\gamma_i z}{z-1}+\eta_i%\\
%&=&\sum_{d\in [p_t+1..m]}\sum_{j\in [n_d],a_{j,d}=L,b_{j,d}=-}\frac{V_d-V_{d-1}}{(V_m-V_0)n_d}\frac{1}{u-x_j^{(d)}}\\
%&&+
%\sum_{j\in [n_{p_t}],a_{j,{p_t}}=L,b_{j,{p_t}}=-}\frac{(V_{p_t}-V_{p_t-1})(1-\alpha_t)}{(V_m-V_0)n_{p_t}}\frac{1}{u-x_j^{(p_t)}}.
\end{eqnarray*}
\item 
\begin{eqnarray*}
S_{p_t}:=\{x_j^{(p)}, \mathrm{s.t.}\ p\in[p_t..m],j\in[n_p]:a_{j,p}=L,b_{j,p}=-\}.
\end{eqnarray*}
\item $C_{1}$ is a simple, closed, positively oriented, contour enclosing 1, and no other singularities of the integrand.
\end{itemize}
\end{theorem}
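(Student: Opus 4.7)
\medskip

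\noindent\textbf{Proof proposal for Theorem \ref{t412}.}
The plan is to imitate the strategy used for Theorem \ref{t33}, but with the asymptotic input coming from Lemma \ref{tm2} in place of Lemma \ref{l22}.  The essential new feature is the change of variables $u_i\mapsto u_i x_{i,\eps}$, which centers every cluster of equal weights at $u_i=1$ and hence forces the contours $C_1$ in the final formula to surround $z=1$ rather than the individual $x_i^{(p)}$.

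First I would use Lemma \ref{tm2} to decompose
\begin{equation*}
\sS_{\rho^t_{\eps},\underline{x}^{(L,-,>t)}}\bigl(\underline{ux}^{K,\eps},\underline{x}^{(L,-,>t)\setminus K,\eps}\bigr)
=\exp\!\left(N^{(\eps)}\sum_{i\in K}\bigl[Q_{\psi(p_i,j_i)}(u_i)+R_{p_t,\alpha_t,\psi(p_i,j_i)}(u_i)\bigr]\right)T_{N^{(\eps)}}(\underline{u}),
\end{equation*}
where $T_{N^{(\eps)}}(\id)=1$ and $N^{(\eps)}{}^{-1}\log T_{N^{(\eps)}}\to 0$ uniformly on a complex neighborhood of $\id$.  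Since $u\,\partial_u$ is invariant under the scaling $u\mapsto ux_{i,\eps}$, the differential operator $\sD_k$ acts identically in the rescaled variables $v_i=u_i$.  Then I would apply \eqref{emt} with $m=1$; expanding $(v_s\partial_{v_s})^k$ on $\tilde V(v)\cdot e^{N^{(\eps)}\Phi}T_{N^{(\eps)}}$ via the product rule produces the sum
\begin{equation*}
\frac{1}{[N^{(\eps)}]^{k+1}}\sum_{s}\sum_{g=0}^k\binom{k}{g}v_s^k\bigl[N^{(\eps)}\Phi_s'(v_s)\bigr]^{k-g}
\left(\sum_{j\neq s}\frac{x_{j,\eps}}{v_sx_{s,\eps}-v_jx_{j,\eps}}\right)^{\!g}+o(1).
\end{equation*}
I would split the inner Vandermonde sum into three pieces: (a) indices $j$ in the same cluster as $s$, i.e.\ $x_j=x_s$; (b) indices $j$ in strictly higher clusters $\psi(\cdot)<\psi(s)$ (larger $x$-value); (c) indices $j$ in strictly lower clusters $\psi(\cdot)>\psi(s)$ (smaller $x$-value).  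Evaluating at $v=\id$, pieces (b) and (c) yield finite constants because $v_jx_{j,\eps}-v_sx_{s,\eps}$ stays bounded away from $0$; the limit of (c) produces the constant $\eta_i$, while (b) is absorbed into $R'_{p_t,\alpha_t,i}$ by reindexing.  Piece (a) is singular and is handled exactly as in Theorem \ref{t33} by invoking Lemma \ref{la1}: in the rescaled variable the cluster contributions converge, modulo $1/(g+1)!$ weights, to the Taylor expansion of $\gamma_i z/(z-1)$ evaluated at $z=1$.  Repackaging the resulting binomial--Taylor sum as a single contour residue then gives the expression
\begin{equation*}
\frac{1}{2(k+1)\pi\mathbf{i}}\sum_{x_j^{(p)}\in S_{p_t}}\oint_{C_1}\frac{dz}{z}\Bigl[z\bigl(Q'_{\psi(p,j)}(z)+R'_{p_t,\alpha_t,\psi(p,j)}(z)\bigr)+U_{p_t,\alpha_t,\psi(p,j)}(z)\Bigr]^{k+1},
\end{equation*}
with $U_{p_t,\alpha_t,i}(z)=\gamma_i z/(z-1)+\eta_i$ and $C_1$ encircling only the pole at $z=1$.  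The sum over $x_j^{(p)}\in S_{p_t}$ arises because each distinct weight in the slice $[t..r^{(\eps)}]$ gives its own cluster residue.

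Finally, for the concentration step I would apply \eqref{emt} with $m=2$.  As in the proof of Theorem \ref{t33}, the dominant contribution comes from $\sD_k^2$ acting on $e^{N^{(\eps)}\Phi}$ with both copies of $(v_s\partial_{v_s})^k$ producing $[N^{(\eps)}\Phi']^k$ factors; the mixed Vandermonde cross terms are of strictly lower order in $N^{(\eps)}$.  This shows $\EE(\int x^k\bm_{\rho^t_\eps}(dx))^2$ and $(\EE\int x^k\bm_{\rho^t_\eps}(dx))^2$ share the same leading term, so the variance vanishes and the claimed formula holds in the limit.

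The main obstacle is the precise identification of $U_{p_t,\alpha_t,i}$.  Unlike in Theorem \ref{t33}, where every pole $x_j^{(p)}$ is distinct in the variable $u$ and the Vandermonde cross terms organize naturally into $W_{p_t,\alpha_t}$, here the rescaling $u_j\mapsto u_jx_{j,\eps}$ collapses every cluster to $z=1$, and one must carefully keep track of the bookkeeping between same-cluster, higher-cluster, and lower-cluster contributions.  Assumption \ref{ap32} (in particular the lower bound $\alpha$ on $\eps\log(x_{i,\eps}/x_{j,\eps})$ for distinct clusters) is crucial to guarantee that the non-dominant $\sigma\neq\bar\sigma_0$ permutations in Proposition \ref{p437} are suppressed by $e^{-C\eps^{-2}}$, ensuring the decomposition above via Proposition \ref{p436} is valid, and that the cross-cluster Vandermonde sums converge to the prescribed constants rather than contributing additional singular terms.
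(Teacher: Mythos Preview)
Your proposal is correct and follows exactly the route the paper takes: the paper's own proof is a single line (``The theorem follows from similar arguments as in the proof of Theorem~\ref{t33}''), and you have spelled out those similar arguments, replacing Lemma~\ref{l22} by Lemma~\ref{tm2} and tracking the rescaling $u_i\mapsto u_i x_{i,\eps}$ that collapses each cluster to the pole at $z=1$.

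One small bookkeeping correction: your piece (b), the Vandermonde cross terms coming from indices $j$ in strictly \emph{higher} clusters (larger $x_{j,\eps}$), does not get ``absorbed into $R'_{p_t,\alpha_t,i}$.''  At $v=\id$ those terms are $\frac{x_{s,\eps}}{x_{s,\eps}-x_{j,\eps}}$ with $x_{j,\eps}\gg x_{s,\eps}$ under Assumption~\ref{ap32}, hence they vanish in the limit; $R'_{p_t,\alpha_t,i}$ is already part of the exponent $\Phi$ and involves only the $(R,+)$ and $(L,+)$ weights, not the $(L,-)$ inter-cluster Vandermonde contributions.  Piece (c) (lower clusters, $x_{j,\eps}\ll x_{s,\eps}$) gives the constant $1$ per index and hence $\eta_i$ after normalization, as you say, and piece (a) produces $\gamma_i z/(z-1)$ via Lemma~\ref{la1}.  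With this adjustment the identification of $U_{p_t,\alpha_t,i}$ is clean and the rest of your argument goes through unchanged.
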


\begin{proof}The theorem follows from similar arguments as in the proof of Theorem \ref{t33}.
\end{proof}

\subsection{Frozen boundary}

Again let $\mathbf{m}_{p_t,\alpha_t}$ be the limit of $\mathbf{m}_{\rho_{\epsilon}^t}$ as $\epsilon\rightarrow 0$. Define the liquid region $\mathcal{L}$ and the frozen boundary $\partial \mathcal{L}$ as in Definition \ref{df41}.

For $i\in I$, define 
\begin{eqnarray*}
F_{p_t,\alpha_t}^{(i)}(z)=\frac{z\left(Q_{i}'\left(z\right)+R'_{p_t,\alpha_t,i}(z)\right)+U_{p_t,\alpha_t,i}(z)}{\phi_{p_t,\alpha_t}}.
\end{eqnarray*}
Then when
$x$ is in a neighborhood of infinity, we have
\begin{align*}
  \mathrm{St}_{\mathbf{m}_{p_t,\alpha_t}}(x)
  &=\sum_{j=0}^{\infty}x^{-(j+1)} \int_{\RR}y^j\mathbf{m}_{p_t,\alpha_t}(dy)\\
&=\sum_{j=0}^{\infty}
\frac{1}{2(j+1)\pi\mathbf{i}}\sum_{x_j^{(p)}\in S_{p_t}}\oint_{C_{1}}\left(\frac{F_{p_t,\alpha_t}^{(\psi(p,j))}(z)}{x}\right)^{j+1}\frac{dz}{z}\\
&=-\frac{1}{2\pi\mathbf{i}}\sum_{x_j^{(p)}\in S_{p_t}}\oint_{C_{1}}\log\left(1-\frac{F_{p_t,\alpha_t}^{(\psi(p,j))}(z)}{x}\right)\frac{dz}{z}.
\end{align*}
Integration by parts gives
\begin{equation*}
\mathrm{St}_{\mathbf{m}_{p_t,\alpha_t}}(x)=
\frac{1}{2\pi\mathrm{i}}\sum_{x_j^{(p)}\in S_{p_t}}\left[\oint_{C_{1}}\log
z\frac{\frac{d}{dz}\left(1-\frac{F_{p_t,\alpha_t}^{(\psi(p,j))}(z)}{x}\right)}{1-\frac{F_{p_t,\alpha_t}^{(\psi(p,j))}(z)}{x}}dz
-\oint_{C_{1}} 
d\left(\log z\log \left(1-\frac{F_{p_t,\alpha_t}^{(\psi(p,j))}(z)}{x}\right)\right)\right].
\end{equation*}

Because for each $i\in[I]$,  $F_{p_t,\alpha_t}^{(i)}(z)$ has a pole at $1$, 
%\begin{equation*}
%F_{p_t,\alpha_t}^{(i)}(z)=\frac{(V_p-V_{p-1})\beta_{t,p}x_j^{(p)}}{(V_m-V_0)n_p\left(z-x_j^{(p)}\right)}+\sum_{k=0}^{\infty}c_k\left(z-x_j^{(p)}\right)^k,
%\end{equation*}
%where
%\begin{eqnarray*}
%\beta_{t,p}=\begin{cases}\alpha_t,&\mathrm{if}\ p=p_t;\\1,&\mathrm{otherwise}.\end{cases}
%\end{eqnarray*}
$F^{(i)}_{p_t,\alpha_t}(z)=x$ has exactly one root in a neighborhood of $1$ when $x$ is in a neighborhood of $\infty$, and thus, we can can find a unique composite inverse Laurent series
given by
\begin{equation*}
G_{p_t,\alpha_t}^{(i)}(w)=1+\sum_{i=1}^{\infty}\frac{\beta_i^{(j)}}{w^i},
\end{equation*}
such that $F_{p_t,\alpha_t}^{(i)}(G_{p_t,\alpha_t}^{(i)}(w))=w$ when $w$ is in a neighborhood of
infinity. Then
\begin{equation}
  \label{eq:rootFG}
z_{i}(x)=G_{p_t,\alpha_t}^{(i)}(x)
\end{equation}
is the unique root of $F_{p_t,\alpha_t}^{(i)}(z)=x$ in a neighborhood of $x_i$.

Since $1-\frac{F^{(i)}_{p_t,\alpha_t}}{x}$ has exactly one zero $z_{i}(x)$ and one pole $1$ in a neighborhood of $1$, we have
\begin{equation*}
\oint_{C_1}d\left(\log z\log \left(1-\frac{F_{p_t,\alpha_t,M}(z)}{x}\right)\right)=0;
\end{equation*}
where $C_1$ is a small positively oriented cycle enclosing $z_i(x)$ and $1$, but no other singularities of the integrand.
Therefore
\begin{equation}
\mathrm{St}_{\mathbf{m}_{p_t,\alpha_t}}(x)=\sum_{x_j^{(p)}\in S_{p_t}}\left[\log(z_{\psi(p,j)}(x))\right]\label{sjl1}
\end{equation}
when $x$ is in a neighborhood of infinity. By the complex analyticity of both
sides of \eqref{sjl1}, we infer that \eqref{sjl1} holds whenever $x$ is outside
the support of $\mathbf{m}_{p_t,\alpha_t}$.

\begin{lemma}\label{lnr}Let $A_i(z,\kappa)$ be defined as in (\ref{kti}). For each $k\in \{0,1,2,\ldots,d_{i+1}-d_i-1\}$, and each $c\in\RR$ $A_i(z,\kappa)=c$ has $|\mathcal{S}_{p_t}|$ roots in $\RR\cup\{\infty\}$, and all these roots are simple.
\end{lemma}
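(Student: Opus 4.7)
The plan is to mimic the strategy used in the proof of Theorem \ref{t38}: view $A_i(z,\kappa)$ as a rational function of $z$ with all poles lying on the real axis (including possibly $\infty$), establish strict monotonicity on each open interval between consecutive poles, and use this to produce exactly one real root per interval; a degree count then forces these to exhaust the roots and to be simple.

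First I would write $A_i(z,\kappa)$ explicitly as a sum of partial fractions coming from the three pieces $Q'_{\psi(p,j)}$, $R'_{p_t,\alpha_t,\psi(p,j)}$, and the additional term $U_{p_t,\alpha_t,\psi(p,j)}/z$ implicit in $F^{(i)}_{p_t,\alpha_t}$. Using the derivative of $H_{\mathbf{m}_i}$ and the Voiculescu $R$-transform, together with the explicit forms of $R'_{p_t,\alpha_t,i}$ and $U_{p_t,\alpha_t,i}$, one sees that every simple pole occurs at a real location: either at $z=1$, at some $z=x_j^{(p)}\in S_{p_t}$, or at $z=\pm 1/x_s^{(p)}$ for $p\in[p_t]$. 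Under Assumption \ref{ap14} all these real poles are pairwise distinct, and the number of poles in $\RR\cup\{\infty\}$ equals the degree $|\mathcal{S}_{p_t}|$ of the rational expression $A_i(\cdot,\kappa)-c$.

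Next I would verify that on each maximal open interval of $\RR\setminus\{\text{real poles}\}$, the derivative $\partial_z A_i(z,\kappa)$ has a definite sign. Each partial-fraction summand contributes a term of the form $\pm x_\ast/(z-a_\ast)^2$; the signs are dictated exactly as in the derivation leading to (\ref{m2k}), and after collecting them the derivative is strictly of one sign on the interval. Hence $A_i(\cdot,\kappa)$ is strictly monotone on each such interval and, having simple poles at the endpoints, sweeps out all of $\RR$. This yields at least one real, simple root of $A_i(z,\kappa)=c$ in each interval. A separate check at the ``interval through $\infty$'' (equivalently, by passing to $w=1/z$) handles the root at or near infinity when $c$ equals the limiting value.

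The main obstacle is the bookkeeping of signs and the degree match: I have to confirm that the number of intervals between consecutive real poles equals exactly $|\mathcal{S}_{p_t}|$, and that the algebraic degree of $A_i(z,\kappa)-c$ coincides with this pole count rather than being reduced by cancellations. Once these counts agree, the at-least-one-real-root-per-interval argument accounts for all roots, leaving no room for complex conjugate pairs or higher-order coincidences. This simultaneously establishes that every root lies in $\RR\cup\{\infty\}$ and that each is simple.
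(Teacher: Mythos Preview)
Your overall architecture --- show that $A_i(\cdot,\kappa)$ is strictly monotone on each real interval between consecutive poles, hence picks up exactly one simple root per interval, and match this against the degree --- is precisely the paper's argument. However, your identification of $A_i$ is off, and this creates a real gap.

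By definition (see (\ref{kti})), $A_i(z,\kappa)$ is obtained from $F_{p_t,\alpha_t}^{(i)}(z)=\kappa$ by \emph{solving for} $\rho\theta_i t_i$, i.e.\ by subtracting off the term $\rho\theta_i/S_{\bm_i}^{-1}(\ln z)$. So $A_i$ contains no Voiculescu $R$-transform, no $H_{\bm_i}$, and no $Q'$ piece; it is an explicit rational function whose only poles are at $z=1$ and at $z=\pm 1/(x_{\si_0(1)}x_s^{(p)})$ for the relevant $(R,+)$ and $(L,+)$ indices. In particular there are no poles at $x_j^{(p)}\in S_{p_t}$ (and note the paper's $\mathcal{S}_{p_t}$, the pole set of $A_i$, is not the same object as $S_{p_t}$). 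Your decomposition therefore lists the wrong singular set and would not yield the correct interval/degree count.

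The other missing ingredient is the sign at $z=1$. The paper's proof hinges on the observation
\[
\gamma_i-\rho\theta_i=\lim_{\epsilon\rightarrow 0}\frac{\bigl|\{j\in[t..r^{(\epsilon)}]:\,x_{j,\epsilon}=x_k^{(p)},\ \psi(p,k)=i\}\bigr|}{N^{(\epsilon)}}-\lim_{\epsilon\rightarrow 0}\frac{N_{L,-,i}^{(\epsilon)}}{N^{(\epsilon)}}<0,
\]
which ensures that the residue of $A_i$ at $z=1$ has the correct sign so that \emph{every} partial-fraction term contributes negatively to $\partial_z A_i$; one then gets $\partial_z A_i(z,\kappa)<0$ on all of $\RR$ minus the poles, without further bookkeeping. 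Once you correct the definition of $A_i$ and insert this inequality, the monotonicity is immediate and your remaining steps go through exactly as in the paper.
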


\begin{proof}Note that
\begin{small}
\begin{eqnarray*}
\gamma_i-\rho\theta_i=\lim_{\epsilon\rightarrow 0}
\frac{\left|j\in[t..r^{(\epsilon)}]:a_j^{(\epsilon)}=L,b_j^{(\epsilon)}=-,x_{j,\epsilon}=x_{k}^{(p)},\mathrm{s.t.}\ \psi(p,k)=i\right|}{N^{(\epsilon)}}-\lim_{\epsilon\rightarrow 0}\frac{N_{L,-,i}^{(\epsilon)}}{N^{(\epsilon)}}<0
\end{eqnarray*}
Then explicit arguments show that
\begin{eqnarray}
\frac{dA_i(z,\kappa)}{dz}<0\label{adl}
\end{eqnarray}
whenever $z\in\RR$ and the derivative is well defined. Then $A_i(z,\kappa)$ is strictly decreasing from $+\infty$ to $-\infty$ between any two consecutive singularities of $z$ in $\RR\cup\{\infty\}$. Then the lemma follows.
\end{small}
\end{proof}

\begin{proposition}\label{p72}For any $\kappa\in\RR$, and $i\in[I]$ the equation 
\begin{eqnarray}
F_{p_t,\alpha_t}^{(i)}(z)=\frac{\kappa-(1-\phi_{p_t,\alpha_t})}{\phi_{p_t,\alpha_t}}.\label{p1}
\end{eqnarray} has at most one pair of nonreal conjugate roots.
\end{proposition}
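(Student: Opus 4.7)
The plan is to adapt the sign-counting argument used after Lemma \ref{l25} in the staircase case. After clearing denominators, $F_{p_t,\alpha_t}^{(i)}(z)=\kappa$ becomes a polynomial equation in $z$ of some degree $N$ with real coefficients. I would show that at least $N-2$ of its roots are real; since nonreal roots come in complex-conjugate pairs, this immediately yields at most one such pair.

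First I would enumerate all real singularities of $F_{p_t,\alpha_t}^{(i)}(z)$: the simple pole at $z=1$ coming from $U_{p_t,\alpha_t,i}(z)=\frac{\gamma_i z}{z-1}+\eta_i$; the simple poles of $zR'_{p_t,\alpha_t,i}(z)$ located at $-1/(x_{\sigma_0(1)}x_s^{(p)})$ for $(a_{s,p},b_{s,p})=(R,+)$ and at $1/(x_{\sigma_0(1)}x_s^{(p)})$ for $(a_{s,p},b_{s,p})=(L,+)$; and the real singularities of $zQ_i'(z)$, which through the explicit form of $H_{\mathbf{m}_i}$ and the $R$-transform of the piecewise-constant (density $0$ or $1$) measure $\mathbf{m}_i$ described in Lemma \ref{cm} are a finite set of simple real poles.

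Next, on each open interval between two consecutive real singularities, I would argue that $F_{p_t,\alpha_t}^{(i)}$ is continuous and monotonic, passing from $+\infty$ (resp.\ $-\infty$) at one endpoint to $-\infty$ (resp.\ $+\infty$) at the other. For the rational contributions $zR'_{p_t,\alpha_t,i}(z)+U_{p_t,\alpha_t,i}(z)$ this is read off immediately from the signs of the residues under Assumption \ref{ap14}, exactly as in the $M=1$ argument used to justify at most one pair of complex conjugate roots of \eqref{m1k}. For the $Q_i'$ piece, Lemma \ref{lnr} (which provides $dA_i/dz<0$ between consecutive singularities in $\mathbb{R}\cup\{\infty\}$) supplies precisely the monotonicity needed. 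Adding a globally monotonic (in fact strictly decreasing) function to a function that strictly decreases between consecutive poles preserves the ``$+\infty\to-\infty$'' behavior on each interval, so by the intermediate value theorem $F_{p_t,\alpha_t}^{(i)}(z)=\kappa$ has at least one real solution on each bounded interval between consecutive real poles; together with what can be harvested from the two unbounded intervals, this gives at least $N-2$ real roots.

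The main obstacle is the $zQ_i'(z)$ part: unlike the rational terms, whose pole locations and residue signs are immediate, the poles of $Q_i'$ come from the $R$-transform of $\mathbf{m}_i$ and the logarithmic term in \eqref{hmi}. Handling them requires exploiting the explicit piecewise description of $\mathbf{m}_i$ from Lemma \ref{cm} (density equal to $0$ or $1$ on disjoint intervals $[\beta_{i,k},\gamma_{i,k}]$), which lets one reduce $H_{\mathbf{m}_i}'$ to an explicit sum of simple rational functions and confirm that all residues have the sign required by Lemma \ref{lnr}. Once this uniform sign pattern across all real poles is in hand, the interval-by-interval intermediate value count closes the argument.
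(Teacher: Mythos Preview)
Your proposal has a genuine gap at the very first step. The equation $F_{p_t,\alpha_t}^{(i)}(z)=\kappa$ is \emph{not} algebraic in $z$: writing out $zQ_i'(z)$ using \eqref{hmi} one finds
\[
zQ_i'(z)=\rho\theta_i\,\frac{1}{S_{\mathbf m_i}^{(-1)}(\ln z)}-\rho\theta_i\,\frac{z}{z-1}-\rho\Bigl[\sum_{g=i+1}^{I}\theta_g\Bigr],
\]
and the term $1/S_{\mathbf m_i}^{(-1)}(\ln z)$ is transcendental in $z$. So ``clearing denominators'' does not produce a polynomial of some degree $N$, there is no finite list of simple real poles for the $Q_i'$ piece, and the intermediate-value count you sketch cannot be carried out directly on $F_{p_t,\alpha_t}^{(i)}$. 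Your appeal to Lemma~\ref{lnr} is also misplaced: that lemma concerns the auxiliary rational function $A_i(z,\kappa)$, not $F_{p_t,\alpha_t}^{(i)}$ itself.

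The paper resolves this by the substitution $t_i=1/S_{\mathbf m_i}^{(-1)}(\ln z)$. Because $\mathbf m_i$ has density $0$ or $1$ (Lemma~\ref{cm}), its Stieltjes transform is the logarithm of a rational function, so this substitution forces the algebraic relation \eqref{ze},
\[
z=\prod_{k}\frac{t_i-\beta_{i,k}}{t_i-\gamma_{i,k}}.
\]
Solving the original equation for $t_i$ gives the linear expression $\rho\theta_i t_i=A_i(z,\kappa)$ with $A_i$ rational in $z$, and substituting back yields a genuine rational equation $z=B_i(z,\kappa)$. The root-counting is then performed on this equation: Lemma~\ref{lnr} shows $A_i(\,\cdot\,,\kappa)$ is strictly decreasing between its poles, which pins down the interlacing of the zeros $u_{k,j}$ and poles $v_{k,j}$ of $B_i$, and Lemma~\ref{l33} then gives the monotonicity of $B_i$ needed for the interval count. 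The missing idea in your proposal is precisely this change of variable that converts the transcendental equation into an algebraic one before any counting can begin.
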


\begin{proof}We only prove the case when $x_{i,\epsilon}=x_{\si_0(1)}$;  the lemma can be proved similarly for other values of $i\in[I]$.

When $x_{i,\epsilon}=x_{\si_0(1)}$, from (\ref{p1}), we obtain
\begin{eqnarray}
\kappa-(1-\phi_{p_t,\alpha_t})&=&\frac{z}{z-1}\left(\gamma_i-\rho\theta_i\right)+\eta_i-\rho\left[\sum_{g=i+1}^{I}\theta_g\right]+\frac{\rho\theta_i}{S_{\bm_i}^{-1}(\ln z)}\label{ke}\\
&&+z\left(\sum_{p=1}^{p_t-1}\sum_{s=1}^{n_p}\frac{(V_p-V_{p-1})\zeta_{R,+,s,p}}{V_m-V_0}\frac{x_{\si_0(1)}x_{s}^{(p)}}{1+zx_{\si_0(1)}x_{s}^{(p)}}\right.\notag\\
&&+\sum_{s=1}^{n_{p_t}}\frac{\alpha_t(V_{p_t}-V_{{p_t}-1})\zeta_{R,+,s,p_t}}{V_m-V_0}\frac{x_{\si_0(1)}x_{s}^{(p_t)}}{1+zx_{\si_0(1)}x_{s}^{(p_t)}}\notag\notag\\
&&+\sum_{p=1}^{p_t-1}\sum_{s=1}^{n_p}\frac{(V_p-V_{p-1})\zeta_{L,+,s,p}}{V_m-V_0}\frac{x_{\si_0(1)}x_{s}^{(p)}}{1-zx_{\si_0(1)}x_{s}^{(p)}}\notag\\
&&\left.+\sum_{s=1}^{n_{p_t}}\frac{\alpha_t(V_{p_t}-V_{{p_t}-1})\zeta_{L,+,s,p_t}}{V_m-V_0}\frac{x_{\si_0(1)}x_{s}^{(p_t)}}{1-zx_{\si_0(1)}x_{s}^{(p_t)}}\right)\notag
\end{eqnarray}
By Lemma \ref{cm}, we obtain that the stieljes transform of $\bm_i$ is given by
\begin{eqnarray*}
\mathrm{St}_{\bm_i}\left(t_i\right)=\log\prod_{k=0}^{d_{i+1}-d_i-1}\frac{t_i-\beta_{i,k}}{t_i-\gamma_{i,k}}
\end{eqnarray*}
If
\begin{eqnarray*}
\frac{1}{S_{\bm_i}^{-1}(\ln z)}=t_i;
\end{eqnarray*}
then
\begin{eqnarray}
z=\prod_{k=0}^{d_{i+1}-d_i-1}\frac{t_i-\beta_{i,k}}{t_i-\gamma_{i,k}}\label{ze}
\end{eqnarray}
Using (\ref{ke}) to solve $t_i$, we obtain
\begin{eqnarray}
\rho\theta_it_i&=&\kappa-\frac{z}{z-1}\left(\gamma_i-\rho\theta_i\right)-\eta_i+\rho\left[\sum_{g=i+1}^{I}\theta_g\right]-(1-\phi_{p_t,\alpha_t})\label{kti}\\
&&-z\left(\sum_{p=1}^{p_t-1}\sum_{s=1}^{n_p}\frac{(V_p-V_{p-1})\zeta_{R,+,s,p}}{V_m-V_0}\frac{x_{\si_0(1)}x_{s}^{(p)}}{1+zx_{\si_0(1)}x_{s}^{(p)}}\right.\notag\\
&&+\sum_{s=1}^{n_{p_t}}\frac{\alpha_t(V_{p_t}-V_{{p_t}-1})\zeta_{R,+,s,p_t}}{V_m-V_0}\frac{x_{\si_0(1)}x_{s}^{(p_t)}}{1+zx_{\si_0(1)}x_{s}^{(p_t)}}\notag\notag\\
&&+\sum_{p=1}^{p_t-1}\sum_{s=1}^{n_p}\frac{(V_p-V_{p-1})\zeta_{L,+,s,p}}{V_m-V_0}\frac{x_{\si_0(1)}x_{s}^{(p)}}{1-zx_{\si_0(1)}x_{s}^{(p)}}\notag\\
&&\left.+\sum_{s=1}^{n_{p_t}}\frac{\alpha_t(V_{p_t}-V_{{p_t}-1})\zeta_{L,+,s,p_t}}{V_m-V_0}\frac{x_{\si_0(1)}x_{s}^{(p_t)}}{1-zx_{\si_0(1)}x_{s}^{(p_t)}}\right)\notag:=A_i(z,\kappa)
\end{eqnarray}
Plugging (\ref{kti}) to (\ref{ze}), we obtain
\begin{eqnarray}
z=\prod_{k=0}^{d_{i+1}-d_i-1}\frac{A_i(z,\kappa)-\rho\theta_i\beta_{i,k}}{A_i(z,\kappa)-\rho\theta_i\gamma_{i,k}}=B_i(z,\kappa)\label{dzb}
\end{eqnarray}
By Lemma \ref{lnr}, for each $k\in \{0,1,2,\ldots,d_{i+1}-d_i-1\}$, $A_i(z,\kappa)=\beta_{i,k}$ (resp.\ $A_i(z,\kappa)=\gamma_{i,k}$) has $|\mathcal{S}_{p_t}|:=m+1$ roots in $\RR\cup\{\infty\}$, and all these roots are simple. Hence we can write
\begin{eqnarray*}
B_i(z,\kappa)=C_i\frac{\prod_{k=0}^{d_{i+1}-d_i-1}\left(\prod_{j=1}^{m+1}(z-u_{k,j})\right)}{\prod_{k=0}^{d_{i+1}-d_i-1}\left( \prod_{j=1}^{m+1}(z-v_{k,j})\right)}
\end{eqnarray*}
where $\{u_{k,j}\}_{j=1}^{m+1}$ (resp.\ $\{v_{k,j}\}_{j=1}^{m+1}$) are roots of $A_i(z,\kappa)=\rho\theta_i\beta_{i,k}$ (resp.\ $A_i(z,\kappa)=\rho\theta_i\gamma_{i,k}$) satisfying
\begin{eqnarray*}
-\infty<u_{k,2}<\ldots<u_{k,m+1}<\infty\\
-\infty<v_{k,2}<\ldots<v_{k,m+1}<\infty
\end{eqnarray*}
and
\begin{eqnarray*}
u_{k,1}\in [-\infty,u_{k,2})\cup (u_{k,m+1},\infty]\\
v_{k,1}\in [-\infty,u_{k,2})\cup (v_{k,m+1},\infty]
\end{eqnarray*}
and $C_i\in \RR$ are constants independent of $z$ given by
\begin{eqnarray*}
C_i=\lim_{z\rightarrow\infty}B_i(z,\kappa).
\end{eqnarray*}
Recall that 
\begin{eqnarray}
\beta_{i,d_{i+1}-d_i-1}<\gamma_{i,d_{i+1}-d_i-1}<\beta_{i,d_{i+1}-d_i-2}<\gamma_{i,d_{i+1}-d_i-2}<\ldots<\beta_{i,0}<\gamma_{i,0}\label{brit}
\end{eqnarray}

Recall that $\mathcal{S}_{p_t}$ consists of all the singular points of $A_i(z,\kappa)$, which 
divide $\RR$ into $|\mathcal{S}_{p_t}|-1$ bounded intervals and two unbounded intervals.

By Lemma \ref{lnr}, $A_i(z,\kappa)$ decreases from $\infty$ to $-\infty$ in each interval of $\RR\cup\{\infty\}$ divided by points in $\mathcal{S}_{p_t}$. Since $\gamma_{i,k}>\beta_{i,k}$, the solution in $z$ of $A_i(z,\kappa)-\rho\theta_i\gamma_{i,k}=0$ is to the left of the solution in $z$ of $A_i(z,\kappa)-\rho\theta_i\beta_{i,k}=0$ in each bounded interval of $\RR$ divided by points in $\mathcal{S}_{p_t}$. More precisely, in the $j$th one (counting from the left) of the $m$ bounded intervals above, there exist $2(d_{i+1}-d_i)$ roots in $\{u_{k,j},v_{k,j}\}_{k\in[d_{i+1}-d_i-1]\cup\{0\},j\in[m+1]}$ given by
\begin{eqnarray*}
v_{0,j+1}<u_{0,j+1}<v_{1,j+1}<u_{1-1,j+1}<\ldots<v_{d_{i+1}-d_i-1,j+1}<u_{d_{i+1}-d_i-1,j+1}.
\end{eqnarray*}
Then $\{v_{k,j}\}_{k\in[d_{i+1}-d_i-1]\cup\{0\},j\in[m+1]}$ divided $\RR$ into $(m+1)(d_{i+1}-d_i)-1$ bounded intervals and two unbounded intervals. By Lemma \ref{l33}, the following cases might occur
\begin{enumerate}
    \item For fixed $\kappa\in \RR$, when $z$ is in each bounded interval of $\RR$ divided by 
    $\{v_{k,j}\}_{k\in[d_{i+1}-d_i-1]\cup\{0\},j\in[m+1]}$, $B_i(z,\kappa)$ increases from $-\infty$ to $\infty$; in $\max\{v_{k,j}\}_{k\in[d_{i+1}-d_i-1]\cup\{0\},j\in[m+1]},\infty)$, $B_i(z,\kappa)$ increases from $-\infty$ to $C_i$; in
    $(-\infty,\min\{v_{k,j}\}_{k\in[d_{i+1}-d_i-1]\cup\{0\},j\in[m+1]})$, $B_i(z,\kappa)$ increases from $C_i$ to $\infty$. In this case, equation (\ref{dzb}) has at least one real solution in each bounded interval of $\RR$ divided by 
    $\{v_{k,j}\}_{k\in[d_{i+1}-d_i-1]\cup\{0\},j\in[m+1]}$; this gives us at least $(m+1)(d_{i+1}-d_i)-1$ real solutions.
    \item For fixed $\kappa\in \RR$, when $z$ is in each bounded interval of $\RR$ divided by 
    $\{v_{k,j}\}_{k\in[d_{i+1}-d_i-1]\cup\{0\},j\in[m+1]}$, $B_i(z,\kappa)$ decreases from $-\infty$ to $\infty$; in $\max\{v_{k,j}\}_{k\in[d_{i+1}-d_i-1]\cup\{0\},j\in[m+1]},\infty)$, $B_i(z,\kappa)$ decreases from $+\infty$ to $C_i$; in
    $(-\infty,\min\{v_{k,j}\}_{k\in[d_{i+1}-d_i-1]\cup\{0\},j\in[m+1]})$, $B_i(z,\kappa)$ decreases from $C_i$ to $-\infty$. In this case, equation (\ref{dzb}) has at least one real solution in each bounded or unbounded interval of $\RR$ divided by 
    $\{v_{k,j}\}_{k\in[d_{i+1}-d_i-1]\cup\{0\},j\in[m+1]}$; this gives us at least $(m+1)(d_{i+1}-d_i)+1$ real solutions.
\end{enumerate}
The equation (\ref{dzb}) has at most $(m+1)(d_{i+1}-d_i)+1$ roots in $\CC$ counting multiplicities. Then the lemma follows.
\end{proof}

The proof of Lemma \ref{p72} also gives us the following corollary.
\begin{corollary}\label{cl415}Assume that equation (\ref{p1}) has one pair of nonreal conjugate roots in $z$, then when $z$ is in each bounded interval of $\RR$ divided by 
    $\{v_{k,j}\}_{k\in[d_{i+1}-d_i-1]\cup\{0\},j\in[m+1]}$, $B_i(z,\kappa)$ increases from $-\infty$ to $\infty$; in $\max\{v_{k,j}\}_{k\in[d_{i+1}-d_i-1]\cup\{0\},j\in[m+1]},\infty)$, $B_i(z,\kappa)$ increases from $-\infty$ to $C_i$; in
    $(-\infty,\min\{v_{k,j}\}_{k\in[d_{i+1}-d_i-1]\cup\{0\},j\in[m+1]})$, $B_i(z,\kappa)$ increases from $C_i$ to $\infty$.
\end{corollary}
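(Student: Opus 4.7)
The plan is to derive this corollary directly from the casework already carried out in the proof of Proposition \ref{p72}, invoking a root-counting argument. Recall that the proof of Proposition \ref{p72} split into two scenarios for the monotonic behavior of $B_i(z,\kappa)$ on the intervals of $\RR$ cut out by the $\{v_{k,j}\}$: scenario (1), where $B_i(z,\kappa)$ is increasing on each bounded interval (and has the stated behavior near $\pm\infty$), which produces at least $(m+1)(d_{i+1}-d_i)-1$ real solutions to $z=B_i(z,\kappa)$; and scenario (2), where $B_i(z,\kappa)$ is decreasing on each bounded interval, which produces at least $(m+1)(d_{i+1}-d_i)+1$ real solutions.

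My plan is to eliminate scenario (2) under the hypothesis that (\ref{p1}) has a pair of nonreal complex conjugate roots. First, I would rewrite (\ref{p1}) as $z = B_i(z,\kappa)$ using the substitution chain (\ref{kti})--(\ref{dzb}) developed in the proof of Proposition \ref{p72}, so that the roots of (\ref{p1}) correspond bijectively to the solutions of $z=B_i(z,\kappa)$. Next, I would invoke the total degree count already established there: the equation $z=B_i(z,\kappa)$ has at most $(m+1)(d_{i+1}-d_i)+1$ roots in $\CC$ counted with multiplicity. Under the assumption that a pair of nonreal conjugate roots exists, the number of real roots is at most $(m+1)(d_{i+1}-d_i)-1$.

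The contradiction then forces scenario (1): if scenario (2) held, we would have at least $(m+1)(d_{i+1}-d_i)+1$ real roots, exceeding the bound just obtained. Hence scenario (1) must hold, which is exactly the monotonicity statement of the corollary, including the behavior on the two unbounded intervals (where $B_i(z,\kappa)$ increases from $-\infty$ to $C_i$ on the right-unbounded interval, and from $C_i$ to $\infty$ on the left-unbounded interval).

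I expect the argument to be essentially bookkeeping rather than containing any genuine obstacle, since all the analytic input (the behavior of $A_i(z,\kappa)$ on each interval cut out by $\mathcal{S}_{p_t}$, the interlacing (\ref{brit}), and the decomposition of $B_i(z,\kappa)$ as a rational function of $z$) is already in place from the proof of Proposition \ref{p72}. The only subtlety to check is that the pair of nonreal conjugate roots cannot accidentally coincide with a real root of high multiplicity in a way that perturbs the counting; this is handled by Lemma \ref{lnr}, which ensures that the singular structure is simple, so that the monotonicity dichotomy genuinely controls the sign count of real roots.
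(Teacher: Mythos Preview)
Your proposal is correct and follows essentially the same approach as the paper: the paper simply states that the corollary follows from the proof of Proposition \ref{p72}, and your root-counting elimination of scenario (2) is precisely the intended extraction from that proof.
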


\begin{lemma}\label{l70}
Let $i\in[I]$. Assume $\kappa\in \RR$ is such that the equation 
(\ref{p1})
has a pair of nonreal conjugate roots. Let $s_i(x)$ be a real root of (\ref{p1}). Then
\begin{eqnarray*}
\left.\frac{\partial s_1(x)}{\partial x}\right|_{x=x_0}\geq 0.
\end{eqnarray*}
It is equal to 0 if and only if $s_i(x_0)=1$.
\end{lemma}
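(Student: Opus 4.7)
The plan is to combine implicit differentiation with the parameterization by $t$ from the proof of Proposition~\ref{p72}, then invoke the monotonicity of $B_i$ from Corollary~\ref{cl415} under the nonreal-pair hypothesis to pin down the sign of $\partial s_i/\partial x$.

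Differentiating $F^{(i)}_{p_t,\alpha_t}(s_i(x))=x$ implicitly gives
\[
\frac{\partial s_i}{\partial x} = \frac{1}{[F^{(i)}_{p_t,\alpha_t}]'(s_i(x))},
\]
reducing the claim to $[F^{(i)}_{p_t,\alpha_t}]'(s_i)\geq 0$, with equality in the limiting sense precisely at the pole $s_i=1$ of $U_{p_t,\alpha_t,i}(z)=\gamma_iz/(z-1)+\eta_i$ (where the positive residue $\gamma_i$ forces $[F^{(i)}_{p_t,\alpha_t}]'(z)\sim -\gamma_i/(z-1)^2\to -\infty$). Following the proof of Proposition~\ref{p72}, a real solution $(s_i,x)$ of $F^{(i)}(z)=x$ corresponds to a real parameter $t$ via $A_i(z,x)=\rho\theta_it$ and $z=z(t):=\prod_{k=0}^{d_{i+1}-d_i-1} (t-\beta_{i,k})/(t-\gamma_{i,k})$. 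Writing $A_i(z,x)=x+h(z)$ with $h'(z)<0$ by~(\ref{adl}), one has $x(t)=\rho\theta_it-h(z(t))$, and the chain rule gives
\[
\frac{ds_i}{dx}\bigg|_t=\frac{dz/dt}{dx/dt},\qquad \frac{dz}{dt}=z(t)L(t),\qquad \frac{dx}{dt}=\rho\theta_i-h'(z(t))\,z(t)L(t),
\]
with $L(t)=\sum_k (\beta_{i,k}-\gamma_{i,k})/[(t-\beta_{i,k})(t-\gamma_{i,k})]$.

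The sign analysis proceeds through the equivalent formulation $z=B_i(z,x)$. By Corollary~\ref{cl415}, under the nonreal-conjugate-pair hypothesis, $B_i(z,\kappa)$ is strictly increasing in $z$ on each sub-interval between consecutive $v_{k,j}$'s, rising from $-\infty$ to $+\infty$, so real fixed points of $z\mapsto B_i(z,\kappa)$ occur at diagonal crossings where $B_i(z)-z$ transitions through zero. Implicit differentiation of $z=B_i(z,x)$ yields $ds_i/dx=\partial_xB_i/(1-\partial_zB_i)$, and the monotonicity of $B_i$ together with its product form $B_i=\prod_k(A_i-\rho\theta_i\beta_{i,k})/(A_i-\rho\theta_i\gamma_{i,k})$ and the identity $\partial_xA_i=1$ forces $\partial_xB_i$ and $1-\partial_zB_i$ to share a common sign at every real fixed point. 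Translated through the $t$-parameterization this gives $dz/dt$ and $dx/dt$ matching signs at every real root (a case check confirms $dz/dt<0$ throughout, outside of $t=\pm\infty$: when $t$ is outside all $(\beta_{i,k},\gamma_{i,k})$, $z(t)>0$ and each summand of $L(t)$ is negative; when $t$ is inside a unique interval $(\beta_{i,k_0},\gamma_{i,k_0})$ by (\ref{brit}), $z(t)<0$ and the $k_0$-th summand of $L(t)$ is positive and dominates by interval separation). Consequently $\partial s_i/\partial x\geq 0$, with equality exactly when $dz/dt=0$ and $dx/dt\neq 0$; by Laurent expansion $L(t)=O(1/t^2)$ and $z(t)\to 1$ as $t\to\pm\infty$, while $dx/dt\to\rho\theta_i>0$, so equality holds precisely in the limit $s_i\to 1$.

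The main obstacle is the sign-matching of $\partial_xB_i$ and $1-\partial_zB_i$ at every real fixed point. Carrying this out requires using the structure uncovered in the proof of Corollary~\ref{cl415}, specifically that the alternative scenario contemplated there (which would correspond to the opposite sign pattern for $1-\partial_zB_i$) is ruled out by the nonreal-pair hypothesis, together with the explicit product form of $B_i$ in $A_i$ and the identification $\partial_xA_i=1$ to compute $\partial_xB_i$ in closed form.
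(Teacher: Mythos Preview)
Your core approach coincides with the paper's: both reach the implicit-differentiation formula
\[
s_i'(\kappa)=\frac{\partial_\kappa B_i}{1-\partial_z B_i}
\]
from the fixed-point equation $z=B_i(z,\kappa)$ and invoke Corollary~\ref{cl415} for the structure of real roots under the nonreal-pair hypothesis. The detours through $1/[F^{(i)}]'(s_i)$ and the $t$-parameterization are not needed for the main inequality; the paper works entirely with $B_i$.

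Where you leave a gap is precisely the step you flag as the ``main obstacle'': you assert that $\partial_\kappa B_i$ and $1-\partial_z B_i$ share a sign, but do not establish either sign. The paper handles them separately and directly. For $\partial_\kappa B_i$: write $B_i(z,\kappa)=\prod_k\bigl(\kappa-D_i(z)-\rho\theta_i\beta_{i,k}\bigr)\big/\bigl(\kappa-D_i(z)-\rho\theta_i\gamma_{i,k}\bigr)$; for fixed $z$ the zeros and poles in $\kappa$ interlace as $\rho\theta_i\beta_{i,k+1}+D_i(z)<\rho\theta_i\gamma_{i,k+1}+D_i(z)<\rho\theta_i\beta_{i,k}+D_i(z)$, so Lemma~\ref{l33} gives $\partial_\kappa B_i<0$ whenever $z\notin\Xi_{p_t}$, with vanishing exactly at those singular points (in particular $z=1$). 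For $1-\partial_z B_i$: Corollary~\ref{cl415} says $B_i(\cdot,\kappa)$ runs from $-\infty$ to $+\infty$ on each bounded interval between consecutive $v_{k,j}$'s, and the root count from Proposition~\ref{p72} forces exactly one simple real fixed point per such interval; since $B_i(z,\kappa)-z$ then has a single simple zero on an interval where it goes from $-\infty$ to $+\infty$, its $z$-derivative is positive there, i.e.\ $\partial_z B_i>1$. Hence $s_i'=(\text{negative})/(\text{negative})>0$, with equality exactly when the numerator vanishes, namely $s_i=1$. This is the missing piece in your argument; once you supply it, your sketch and the paper's proof are the same.
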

\begin{proof}We only prove the case when $x_{i,\epsilon}=x_{\si_0(1)}$;  the lemma can be proved similarly for other values of $i\in[I]$.

The derivative $s_i'(\kappa)$ can be computed explicitly from (\ref{dzb}) as follows
\begin{eqnarray*}
s_i'(x)=\frac{\frac{\partial B_i(z,\kappa)}{\partial \kappa}}{1-\frac{\partial B_i(z,\kappa)}{\partial z}}
\end{eqnarray*}
First we claim that $\frac{\partial B_i(z,\kappa)}{\partial \kappa}\leq 0$.
Let 
\begin{eqnarray*}
D_i(z)=\kappa-A_i(z,\kappa)
\end{eqnarray*}
Note that 
\begin{eqnarray*}
B_i(z,\kappa)
&=&\frac{\prod_{k=0}^{d_{i+1}-d_i-1}\left[\kappa-D_i(z)-\beta_{i,k}\right]}{\prod_{k=0}^{d_{i+1}-d_i-1}\left[\kappa-D_i(z)-\gamma_{i,k}\right]}
\end{eqnarray*}
where
\begin{eqnarray*}
&&\beta_{1,k+1}+D_i(z)<\gamma_{i,k+1}+D_i(z)
<\beta_{1,k}+D_i(z).
\end{eqnarray*}
By Lemma \ref{l33}, for each fixed $z$, $B_i(z,\kappa)$ is strictly decreasing in $\kappa$ whenever it is defined. Hence $\frac{\partial B_i(z,\kappa)}{\partial \kappa}< 0$ whenever
\begin{small}
\begin{eqnarray*}
z\notin \left\{1,\left\{\frac{1}{x_{\si_0(1)}x_s^{(p)}}\right\}_{p\in[p_t],s\in[n_p],(a_{s}^{(p)},b_s^{(p)})=(L,+)},\left\{-\frac{1}{x_{\si_0(1)}x_s^{(p)}}\right\}_{p\in[p_t],s\in[n_p],(a_{s}^{(p)},b_s^{(p)})=(R,+)}\right\}:=\Xi_{p_t}
\end{eqnarray*}
\end{small}
and $\frac{\partial B_i(z,\kappa)}{\partial \kappa}= 0$; if $z\in \Xi_{p_t}$.

Now we show that $\frac{\partial B_i(z,\kappa)}{\partial z}> 1$. By Corollary \ref{cl415}, when equation (\ref{p1}) has one pair of nonreal conjugate roots in $z$, then when $z$ is in each bounded interval of $\RR$ divided by 
    $\{v_{k,j}\}_{k\in[d_{i+1}-d_i-1]\cup\{0\},j\in[m+1]}$, $B_i(z,\kappa)$ increases from $-\infty$ to $\infty$. Each real solution is simple and is in a bounded interval of $\RR$ divided by 
    $\{v_{k,j}\}_{k\in[d_{i+1}-d_i-1]\cup\{0\},j\in[m+1]}$. Hence at each real solution of (\ref{dzb}), we have $\frac{\partial B_i(z,\kappa)}{\partial z}> 1$.
    Then the lemma follows.
\end{proof}

Let $i\in[I]$. Consider the following cases
\begin{enumerate}
\item If $i=1$, we consider dimer coverings on a rail-yard graph with left boundary condition given by the partition $\phi^{(i,\si_0)}(N_{L,-}^{(\epsilon)})$, and modified edge weights satisfying
\begin{enumerate}
    \item If the edge is incident to a blue vertex not labeled by $(L,-)$, then the weight is the same as the original weight;
    \item If the edge is incident to a blue vertex labeled by $(L,-)$ and has original weight $x_{\sigma_0(1)}$, then the weight is the same as the original weight;
    \item For all the other cases, give the edge weight 0.
\end{enumerate}
\item If $i>1$, we consider dimer coverings on a rail-yard graph with left boundary condition given by the partition $\phi^{(i,\si_0)}(N_{L,-}^{(\epsilon)})$, and modified edge weights satisfying
\begin{enumerate}
    \item If the edge is incident to a blue vertex not labeled by $(L,-)$, then the weight is the same as the original weight;
    \item If the edge is incident to a blue vertex labeled by $(L,-)$, give the edge weight 0.
\end{enumerate}
\end{enumerate}
Let $t\in[l^{(\epsilon)}..r^{(\epsilon)}]$ satisfying (\ref{lmt}) and $\bm_{\rho_t,\alpha_t,i}$ be the limit counting measure for the partitions on the $(2t-1)$th column. Using the same arguments as before (see also \cite{ZL202}), we obtain that
\begin{eqnarray*}
\mathrm{St}_{\bm_{\rho_t,\alpha_t,i}}\left(\widetilde{\kappa}_i\right)=\log (z_i(\kappa))
\end{eqnarray*}
where $z_i(\kappa)$ is given by (\ref{eq:rootFG}), and
\begin{eqnarray*}
\widetilde{\kappa}_i=\frac{\kappa-(1-\phi_{p_t,\alpha_t})}{\phi_{p_t,\alpha_t}\gamma_i}+\frac{\rho\sum_{g=i+1}^{I}\theta_g-\eta_i}{\phi_{p_t,\alpha_t}\gamma_i}
\end{eqnarray*}
Hence we have
\begin{eqnarray*}
z_i(\kappa)=\mathrm{exp}\left(\int_{\RR}\frac{\bm_{\rho_t,\alpha_t,i}[ds]}{\widetilde{\kappa}_i-s}\right);
\end{eqnarray*}
and 
\begin{eqnarray*}
z_i(\kappa+\mathbf{i}\epsilon)=\mathrm{exp}\left(\int_{\RR}\frac{\left(\widetilde{\kappa}_i-s-\mathbf{i}\epsilon\right)\bm_{\rho_t,\alpha_t,i}[ds]}{\left(\widetilde{\kappa}_i-s\right)^2+\epsilon^2}\right)
\end{eqnarray*}
Therefore $\Im[z_i^{\kappa}(x+\mathbf{i}\epsilon)]<0$ when $\epsilon$ is a small positive number. However, when nonreal roots exist for (\ref{p1}), for real root $s_i(x)$, Lemma \ref{l70} implies that $\Im[s_i(x+\mathbf{i}\epsilon)]>0$ when $\epsilon$ is a small positive number. This implies that when nonreal roots exist for (\ref{p1}), $z_i^{\kappa}(x+\mathbf{i}\epsilon)$ cannot be real. Then we have the following theorem

%\begin{theorem}\label{l71}For the contracting square-hexagon lattice, $(\chi,\kappa)$ is in the frozen region if and only if (\ref{fic}) only has real roots for all $i\in[n]$. The frozen boundary consists of $n$ disjoint cloud curve $C_1,\ldots,C_n$, where for $i\in[n]$, $C_i$ is a cloud curve of class $(m+1)(D_i+1)$ with an explicit parametrization given by 
 % \begin{equation*}
  %  \chi_i(t_i)=\frac{1}{n}\left[t_i-\frac{J_i(t_i)%}{J_i'(t_i)}\right],\quad
 %   \kappa_i(t_i)=\frac{1}{J_i'(t_i)},
 % \end{equation*}
 % where
%\begin{eqnarray*}  
%J_i(t_i)=(n-i+1)+\frac{1}{\Psi_i(t_i)-1};
%\end{eqnarray*}  
%and $\Psi_i$ is given by (\ref{psi}). Moreover, each $C_i$ is tangent to $\kappa=0$ with $D_i$ tangent points, and is tangent to $\kappa=1$ with a unique tangent point. The curve $C_n$ is tangent to $\chi=0$, and the curve $C_1$ is tangent to $\chi-r_{d_1}+\kappa-1=0$.
 %\end{theorem}
 
\begin{lemma}\label{l417}Let $i\in I$. The set
\begin{eqnarray}
\mathcal{C}_i:=\left\{(\chi,\kappa)\in [V_0,V_m]\times\RR: F_{p_t,\alpha_t}^{(i)}(z)=\frac{\kappa-(1-\phi_{p_t,\alpha_t})}{\phi_{p_t,\alpha_t}}\ \mathrm{has\ double\ real\ roots}\label{dci}
\right\}
\end{eqnarray}
is a bounded in $\RR^2$.
\end{lemma}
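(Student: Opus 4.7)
The first coordinate is already bounded since $\chi\in[V_0,V_m]$ by definition, so the task reduces to showing that $\kappa$ stays bounded as $(\chi,\kappa)$ ranges over $\mathcal{C}_i$. A point $(\chi,\kappa)$ lies in $\mathcal{C}_i$ exactly when there exists $z\in\RR$ with $F^{(i)}_{p_t,\alpha_t}(z)=\kappa$ and $\bigl(F^{(i)}_{p_t,\alpha_t}\bigr)'(z)=0$, where $(p_t,\alpha_t)$ is determined by $\chi$ via (\ref{dfc}). Since $p_t$ ranges over the finite set $[m]$ and $\alpha_t\in[0,1]$, the parameter space is compact, so the plan is to show that the set of real critical points $z$ stays in a fixed compact subset of $\RR$, uniformly in $(p_t,\alpha_t)$, and then conclude by continuity of $F^{(i)}_{p_t,\alpha_t}(z)$ on that compact set.

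The first step is to localize real critical points away from the singularities of $F^{(i)}_{p_t,\alpha_t}$. Inspecting the explicit formulas for $Q_i$, $R_{p_t,\alpha_t,i}$, and $U_{p_t,\alpha_t,i}$ in Lemmas \ref{l410} and \ref{tm2}, $F^{(i)}_{p_t,\alpha_t}$ has real singular set contained in a fixed finite set $\mathcal{S}_{p_t}\subset\RR$ that is independent of $\alpha_t$ (the pole at $z=1$ from $U_{p_t,\alpha_t,i}$, the poles at $\pm 1/(x_{\sigma_0(1)}x_s^{(p)})$ from $R_{p_t,\alpha_t,i}$, and the singularities coming from the $H_{\mathbf{m}_j}$ and $\log$ terms in $Q_i$). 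Near any $z_0\in\mathcal{S}_{p_t}$, the leading singular coefficient of $F^{(i)}_{p_t,\alpha_t}$ is a continuous function of $\alpha_t\in[0,1]$ that stays strictly away from zero under Assumption \ref{ap14} (for poles of the type $1/(z-z_0)$) or equal to $\gamma_i>0$ (for the pole at $z=1$), so $\bigl(F^{(i)}_{p_t,\alpha_t}\bigr)'(z)$ blows up at a rate uniform in $\alpha_t$. Hence there is a uniform positive distance $\delta>0$ such that real critical points stay at distance $\geq\delta$ from $\mathcal{S}_{p_t}$ for every $\alpha_t\in[0,1]$.

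Next I would check that real critical points cannot escape to infinity. Each of the summands of $F^{(i)}_{p_t,\alpha_t}(z)$ tends to a finite limit as $|z|\to\infty$, with an $O(1/z)$ correction whose coefficient depends continuously on $\alpha_t$; consequently $|z|^2\bigl(F^{(i)}_{p_t,\alpha_t}\bigr)'(z)$ is uniformly bounded away from $0$ for $|z|$ large (perhaps after absorbing logarithmic terms via explicit computation), and real critical points remain in a bounded region. Combining the two localizations, the real critical points of $F^{(i)}_{p_t,\alpha_t}$ lie in a fixed compact set $K\subset\RR\setminus\mathcal{S}_{p_t}$ for all $(p_t,\alpha_t)\in[m]\times[0,1]$. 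Since $F^{(i)}_{p_t,\alpha_t}(z)$ is jointly continuous in $(p_t,\alpha_t,z)$ on the compact set $[m]\times[0,1]\times K$, its image is bounded, which yields the required bound on $\kappa$. The main technical obstacle is verifying the uniform nonvanishing of the leading singular coefficients at each point of $\mathcal{S}_{p_t}$ and the uniform $O(1/z)$ bound at infinity; once these quantitative estimates are in hand, the compactness argument is straightforward.
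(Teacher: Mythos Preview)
Your approach has a genuine gap at the step you yourself flag as the main obstacle. You claim that the leading singular coefficient of $F^{(i)}_{p_t,\alpha_t}$ at each pole stays strictly away from zero uniformly in $\alpha_t\in[0,1]$, so that real critical points are kept at a uniform positive distance from the singular set. This fails. Inspecting $R'_{p_t,\alpha_t,i}$ in Lemma~\ref{tm2}, the poles at $\pm 1/(x_{\sigma_0(1)}x_s^{(p_t)})$ contributed by the period-$p_t$ terms carry residues proportional to $\alpha_t$, which vanish as $\alpha_t\to 0$; and when such a pole coincides with one coming from a $p<p_t$ term, the combined residue has the form $p_\zeta+q_\zeta\alpha_t$ and can vanish for some $\alpha_t\in(0,1)$. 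In either situation $(F^{(i)}_{p_t,\alpha_t})'$ need not blow up near the pole, and your uniform-$\delta$ separation is not available. (Incidentally, the residue at $z=1$ is $\gamma_i-\rho\theta_i$, not $\gamma_i$, and $\gamma_i$ itself depends on $\chi$.)

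The paper's proof does not try to keep the double-root parameter $z$ away from the singular set; it analyzes what happens as $z$ approaches each singularity along $\mathcal{C}_i$. The singular set is split into three pieces. For poles $\zeta$ with $\alpha_t$-independent residue, both $\kappa$ and $\alpha_t$ diverge as $z\to\zeta$ along the parametric equations (\ref{ke}) and (\ref{pe2}), forcing $\alpha_t\notin[0,1]$, a contradiction. For poles where the residue is $p_\zeta+q_\zeta\alpha_t$, the derivative condition (\ref{pe2}) forces $\alpha_t\to -p_\zeta/q_\zeta$ as $z\to\zeta$, so the singular term $\frac{p_\zeta+q_\zeta\alpha_t}{z-\zeta}$ cancels in the limit and $\kappa$ stays finite. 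For poles whose residue is a pure multiple of $\alpha_t$, (\ref{pe2}) forces $\alpha_t/(z-\zeta)^2$ to remain bounded, hence $\alpha_t/(z-\zeta)\to 0$ and again $\kappa$ is finite. Thus the ``uniform nonvanishing'' you hoped to verify is false, and a cancellation argument of this type is what is actually needed.
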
 
\begin{proof}The condition that (\ref{p1}) has double real roots gives us the following parametric equation given by (\ref{ke}) and
\begin{eqnarray}
0&=&-\frac{1}{(z-1)^2}\left(\gamma_i-\rho\theta_i\right)+\frac{\rho\theta_i}{S_{\bm_i}^{-1}(\ln z)}+\frac{dt_i}{dz}\label{pe2}\\
&&+\left(\sum_{p=1}^{p_t-1}\sum_{s=1}^{n_p}\frac{(V_p-V_{p-1})\zeta_{R,+,s,p}}{V_m-V_0}\frac{x_{\si_0(1)}x_{s}^{(p)}}{\left(1+zx_{\si_0(1)}x_{s}^{(p)}\right)^2}\right.\notag\\
&&+\sum_{s=1}^{n_{p_t}}\frac{\alpha_t(V_{p_t}-V_{{p_t}-1})\zeta_{R,+,s,p_t}}{V_m-V_0}\frac{x_{\si_0(1)}x_{s}^{(p_t)}}{\left(1+zx_{\si_0(1)}x_{s}^{(p_t)}\right)^2}\notag\notag\\
&&+\sum_{p=1}^{p_t-1}\sum_{s=1}^{n_p}\frac{(V_p-V_{p-1})\zeta_{L,+,s,p}}{V_m-V_0}\frac{x_{\si_0(1)}x_{s}^{(p)}}{\left(1-zx_{\si_0(1)}x_{s}^{(p)}\right)^2}\notag\\
&&\left.+\sum_{s=1}^{n_{p_t}}\frac{\alpha_t(V_{p_t}-V_{{p_t}-1})\zeta_{L,+,s,p_t}}{V_m-V_0}\frac{x_{\si_0(1)}x_{s}^{(p_t)}}{\left(1-zx_{\si_0(1)}x_{s}^{(p_t)}\right)^2}\right),\notag
\end{eqnarray}
where $t_i$ and $z$ satisfy (\ref{ze}).
It suffices to show that when $\chi\in [V_0,V_m]$, $\kappa\neq \notin\infty$. 
Let
\begin{eqnarray*}
\Gamma_{p_t}:=\left\{\left\{\frac{1}{x_{\si_0(1)}x_s^{(p_t)}}\right\}_{s\in[n_{p_t}],(a_{s}^{(p_t)},b_s^{(p)})=(L,+)},\left\{-\frac{1}{x_{\si_0(1)}x_s^{(p)}}\right\}_{s\in[n_{p_t}],(a_{s}^{(p)},b_s^{(p)})=(R,+)}\right\}
\end{eqnarray*}
From (\ref{ke}) we see that $\kappa\in \{\pm\infty\}$ only when one of the following three conditions holds:
\begin{enumerate}
\item $z\in \left[\Xi_{p_t}\setminus \Gamma_{p_t}\right]$; or
\item $z\in \left[\Xi_{p_t}\cap \Gamma_{p_t}\right]$; or
\item $z\in \left[\Gamma_{p_t}\setminus \Xi_{p_t}\right] $.
\end{enumerate}
In Case (1), $\kappa\in\{\pm \infty\}$ and $\alpha_t=\{\pm \infty\}$, which contradicts the assumption that $\alpha_t\in [0,1]$. 

In Case (2), for each $\zeta\in \left[\Xi_{p_t}\cap \Gamma_{p_t}\right]$, let $p_{\zeta}$ be the coefficient for $\frac{1}{z-\zeta}$ in $F^{(i)}_{p_t,\alpha_t}$; let $q_{\zeta}$ be the coefficient for $\frac{\alpha_t}{z-\zeta}$ in  $F^{(i)}_{p_t,\alpha_t}$, then by (\ref{pe2}),
\begin{eqnarray*}
\lim_{z\rightarrow \zeta\ \mathrm{along}\ \mathcal{C}_i}\alpha_t=-\frac{p_{\zeta}}{q_{\zeta}}
\end{eqnarray*}
Then in $F^{(i)}_{p_t,\alpha_t}$,
\begin{eqnarray*}
\frac{p_{\zeta}}{z-\zeta}+\frac{q_{\zeta}\alpha_t}{z-\zeta}=0,\ \forall z\neq \zeta.
\end{eqnarray*}
Hence $|\kappa|<\infty$, as $z\rightarrow \zeta$ along $\mathcal{C}_i$.

In Case (3), for each $\xi\in\left[\Gamma_{p_t}\setminus \Xi_{p_t}\right]$, we have
\begin{eqnarray*}
\frac{|\alpha_t(z)|}{|z-\xi|^2}<\infty,\ \mathrm{as}\ z\rightarrow \xi\ \mathrm{along}\ \mathcal{C}_i;
\end{eqnarray*}
which implies
\begin{eqnarray*}
\frac{|\alpha_t(z)|}{|z-\xi|}\rightarrow 0,\ \mathrm{as}\ z\rightarrow \xi\ \mathrm{along}\ \mathcal{C}_i.
\end{eqnarray*}
which implies that $\kappa\notin \{\pm\infty\}$. Then the lemma follows.
\end{proof} 

\begin{lemma}\label{l25}
  Assume that 
  \begin{enumerate}
  \item for any $\kappa\in \RR$,
  (\ref{p1}) has at most one pair of complex conjugate roots. 
  \item  Let $\chi$ be given by (\ref{dfc}). For any $i,j\in [I]$ and $i\neq j$, $\mathcal{C}_i\cap\mathcal{C}_j=\emptyset$.
    \end{enumerate}
  Then a point $(\chi,\kappa)$ lies on the frozen boundary if and only if there exists $i\in I$ such that the equation
  (\ref{p1}) has double roots.
\end{lemma}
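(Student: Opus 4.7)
The plan is to characterize the frozen boundary by tracking, as $(\chi,\kappa)$ varies, when the equations $F^{(i)}_{p_t,\alpha_t}(z)=\kappa$ transition between having only real roots and having a pair of nonreal complex conjugate roots. The key input is the Stieltjes transform formula \eqref{sjl1}, which gives
\begin{equation*}
\mathrm{St}_{\mathbf{m}_{p_t,\alpha_t}}(x)=\sum_{x_j^{(p)}\in S_{p_t}}\log\bigl(z_{\psi(p,j)}(x)\bigr),
\end{equation*}
valid off the support of $\mathbf{m}_{p_t,\alpha_t}$, together with the density formula $f_{\mathbf{m}_{p_t,\alpha_t}}(\kappa)=-\frac{1}{\pi}\lim_{\epsilon\to 0+}\Im\,\mathrm{St}_{\mathbf{m}_{p_t,\alpha_t}}(\kappa+\mathbf{i}\epsilon)$.

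First I would argue that $(\chi,\kappa)$ lies in the liquid region $\mathcal{L}$ if and only if there exists $i\in[I]$ for which the equation $F^{(i)}_{p_t,\alpha_t}(z)=\kappa$ has a non-real root. Indeed, if for every $i\in[I]$ all roots of $F^{(i)}_{p_t,\alpha_t}(z)=\kappa$ are real, then by continuity the branch $z_i(\kappa+\mathbf{i}\epsilon)$ in \eqref{eq:rootFG} converges as $\epsilon\to 0+$ to a real root, so $\mathrm{Arg}\,z_i(\kappa+\mathbf{i}\epsilon)$ tends to an integer multiple of $\pi$, producing an integer contribution to $f_{\mathbf{m}_{p_t,\alpha_t}}(\kappa)$ and forcing the density to be $0$ or $1$. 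Conversely, if for some $i$ the equation has a pair of complex conjugate roots, then the argument used in the sentence following the display for $z_i(\kappa+\mathbf{i}\epsilon)$ (combined with Lemma \ref{l70} and the sign of the Stieltjes imaginary part there) shows $z_i(\kappa+\mathbf{i}\epsilon)$ converges to the root with strictly positive imaginary part, contributing a non-integer multiple of $\pi$ to the argument; by hypothesis (1) this cannot be cancelled by any other summand coming from the same $i$, and by hypothesis (2) the nonreal-root regions for distinct indices do not touch at their boundary, so this non-integer contribution cannot be cancelled either, yielding a density strictly between $0$ and $1$.

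Next I would identify the frozen boundary. Since $\partial\mathcal{L}$ is the set of points where the density leaves (or enters) the set $\{0,1\}$, and because each $F^{(i)}_{p_t,\alpha_t}(z)=\kappa$ is polynomial in $z$ after clearing denominators, the real/non-real transition of the roots of any one of these equations occurs exactly when two real roots coalesce, i.e. when the equation has a double real root. Under assumption (1) at most one conjugate pair exists, so no higher coincidence can occur; under assumption (2) the curves $\mathcal{C}_i$ from Lemma \ref{l417} are pairwise disjoint, so a boundary point of $\mathcal{L}$ corresponds to a double real root of exactly one $F^{(i)}_{p_t,\alpha_t}$. This gives the ``only if'' direction. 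Conversely, a point where some $F^{(i)}_{p_t,\alpha_t}(z)=\kappa$ acquires a double real root lies on $\mathcal{C}_i$, and by the implicit-function theorem applied to $F^{(i)}_{p_t,\alpha_t}$ and its derivative, any small perturbation of $(\chi,\kappa)$ across such a point switches the pair of coalescing roots between real and non-real; by the dichotomy above this perturbation crosses $\partial\mathcal{L}$, which yields the ``if'' direction.

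The main obstacle is the careful bookkeeping of which branch $z_i(\cdot)$ tracks which root, and verifying that the non-integer arguments produced by the non-real roots indeed survive in the limit rather than being cancelled across different $i$. Assumption (2) is exactly what prevents such cancellations at the boundary, while assumption (1) ensures each individual equation contributes at most one complex pair so the counting of $\mathrm{Arg}$ contributions is unambiguous. The complex analyticity of both sides of \eqref{sjl1} away from the support of $\mathbf{m}_{p_t,\alpha_t}$, together with the explicit inverse Laurent series defining $z_i$, lets us propagate the dichotomy from a neighborhood of infinity to the whole complement of the support and hence identify $\partial\mathcal{L}$ as claimed.
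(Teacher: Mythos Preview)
Your approach is essentially the same as the paper's: compute the density of $\mathbf{m}_{p_t,\alpha_t}$ from the imaginary part of the Stieltjes transform via \eqref{sjl1}, and identify the frozen boundary as the locus where some $z_i$ transitions between real and non-real, i.e.\ where some $F^{(i)}_{p_t,\alpha_t}(z)=\kappa$ acquires a double root. The paper's own proof is a two-line sketch (``it is straightforward to check the conclusion given the assumptions''), whereas you spell out how assumption~(1) controls the argument count within a single index and how assumption~(2) prevents boundary collisions across indices; one small slip is that the paragraph preceding the lemma shows $\Im z_i(\kappa+\mathbf{i}\epsilon)<0$, not positive, but this sign does not affect your argument.
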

\begin{proof}By~\eqref{sjl1}, we can compute
  the continuous density $f_{\mathbf{m}_{p_t,\alpha_t}}(x)$ of the
  measure $\mathbf{m}_{\kappa}(x)$ with respect to the Lebesgue measure as follows
  \begin{eqnarray*}
    f_{\mathbf{m}_{p_t,\alpha_t}}(x)&=&-\lim_{\epsilon\rightarrow 0+}\frac{1}{\pi}\Im
    [\mathrm{St}_{\mathbf{m}_{p_t,\alpha_t}}(x+\mathbf{i}\epsilon)]\\
 &=&-\lim_{\epsilon\rightarrow 0+}\frac{1}{\pi}\mathrm{Arg}\left(\prod_{x_j^{(p)}\in S_{p_t}} z_{\psi(p,j)}(x+\mathbf{i}\epsilon)\right).
  \end{eqnarray*}
  Then it is straightforward to check the conclusion given the assumptions of the lemma.
\end{proof}

\begin{theorem}\label{t419}Consider dimer coverings on a rail yard graph $RYG(l^{(\epsilon)},r^{(\epsilon)},\underline{a}^{(\epsilon)},\underline{b}^{(\epsilon)})$, conditional on the left and right boundary condition $\lambda^{(l,\epsilon)}$ and $\emptyset$, respectively. Suppose that (\ref{c151}) and Assumptions \ref{ap14}, \ref{ap41}, \ref{ap32}, \ref{ap428} and \ref{ap36} hold. Then the frozen boundary is given by $\cup_{i\in [I]}\mathcal{C}_i$, where for $i,j\in [I]$ and $i\neq j$, $\mathcal{C}_i\cap \mathcal{C}_j=\emptyset$.
\end{theorem}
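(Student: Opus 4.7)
The plan is to apply Lemma \ref{l25} and verify its two hypotheses in the present setting. By Lemma \ref{l25}, once both conditions hold, a point $(\chi,\kappa)$ lies on the frozen boundary if and only if there exists some $i\in[I]$ such that $F^{(i)}_{p_t,\alpha_t}(z)=\kappa$ has a double root, which is precisely the statement that $(\chi,\kappa)\in\bigcup_{i\in[I]}\mathcal{C}_i$ with pairwise disjointness.

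First I would verify condition~(1) of Lemma \ref{l25}. This is exactly the content of Proposition \ref{p72}, applied for each fixed $i\in[I]$: the equation $F^{(i)}_{p_t,\alpha_t}(z)=\kappa$ can have at most one pair of nonreal complex conjugate roots. So (1) is immediate.

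The main work is to verify condition~(2), namely $\mathcal{C}_i\cap\mathcal{C}_j=\emptyset$ for $i\neq j$. Here is the strategy. From Lemma \ref{l417}, each $\mathcal{C}_i$ is a bounded subset of $[V_0,V_m]\times\RR$, and from the parametric form (\ref{ke})--(\ref{pe2}) the value of $\kappa$ on $\mathcal{C}_i$ is controlled by the singularities of $F^{(i)}_{p_t,\alpha_t}$, which are concentrated near the points $\{x_k^{(p)}:\psi(p,k)=i\}$ (and the extra pole at $z=1$ coming from $U_{p_t,\alpha_t,i}$). Under Assumption \ref{ap32}, the parameter $\alpha$ controls the ratio $\min_{x_{i,\epsilon}>x_{j,\epsilon}}x_{i,\epsilon}/x_{j,\epsilon}$, so the groups of weights corresponding to distinct values of the index $\psi(p,k)$ are separated by a factor that can be made arbitrarily large. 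Combining this separation with Lemma \ref{l70} and the monotonicity derived in the proof of Proposition \ref{p72}, one can show that the range of $\kappa$ on $\mathcal{C}_i$ lies in a bounded window determined by the corresponding group $\{x_k^{(p)}:\psi(p,k)=i\}$ and that, for $\alpha$ sufficiently large in Assumption \ref{ap32}, these windows are disjoint across $i\in[I]$. In other words, each cloud component $\mathcal{C}_i$ is trapped near its own group of weights and cannot reach those of another group. I expect this separation-of-scales argument to be the main obstacle: it requires a quantitative estimate on the $\kappa$-range of $\mathcal{C}_i$ (for instance, by bounding $F^{(i)}_{p_t,\alpha_t}$ uniformly when $z$ stays away from its own poles) and then invoking Assumption \ref{ap32} to ensure these ranges do not overlap.

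Having established (1) and (2), I would then invoke Lemma \ref{l25} directly to conclude that the frozen boundary $\partial\mathcal{L}$ coincides with the set of $(\chi,\kappa)$ for which some $F^{(i)}_{p_t,\alpha_t}(z)=\kappa$ admits a double root, i.e.\ with $\bigcup_{i\in[I]}\mathcal{C}_i$. The disjointness $\mathcal{C}_i\cap\mathcal{C}_j=\emptyset$ for $i\neq j$ is then part of the hypothesis verified in the previous paragraph, giving the claimed decomposition of $\partial\mathcal{L}$ into disjoint pieces, one per index $i\in[I]$.
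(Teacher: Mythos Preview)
Your overall plan matches the paper: verify conditions (1) and (2) of Lemma \ref{l25}, with (1) coming directly from Proposition \ref{p72}. However, your mechanism for establishing (2) is mis-aimed. You propose to separate the sets $\mathcal{C}_i$ by invoking Assumption \ref{ap32} and arguing that the weight groups $\{x_k^{(p)}:\psi(p,k)=i\}$ are far apart when $\alpha$ is large. But for $i>1$ those weights have gone to zero in the limit and do not appear in $F^{(i)}_{p_t,\alpha_t}$ at all (recall $R_{p_t,\alpha_t,i}=0$ for $i>1$). The $\kappa$-range of $\mathcal{C}_i$ is instead governed by the measure $\mathbf{m}_i$, whose support $[\beta_{i,d_{i+1}-d_i-1},\gamma_{i,0}]$ is built from the boundary-partition data $a_j,b_j$ (see (\ref{dbik})--(\ref{dcik})), not from the edge weights. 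So the parameter $\alpha$ gives you no leverage on the separation of the $\mathcal{C}_i$'s.

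The paper's argument for (2) uses Lemma \ref{l417} together with Assumption \ref{ap428}(4b): each $\mathcal{C}_i$ is bounded, and taking the constant $C_1$ in Assumption \ref{ap428}(4b) sufficiently large forces the partition gaps $\mu_p^{(\epsilon)}-\mu_q^{(\epsilon)}$ (and hence the supports of the $\mathbf{m}_i$, which feed into the $\kappa$-coordinate on $\mathcal{C}_i$) to be far enough apart that the bounded pieces $\mathcal{C}_i$ cannot overlap. You should replace the appeal to Assumption \ref{ap32} by this boundary-partition separation.
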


\begin{proof}It suffices to check conditions (1) and (2) of Lemma \ref{l25}. Condition (1) follows from Proposition \ref{p72}. Condition (2) follows from Lemma \ref{l417} and Assumption \ref{ap428} (4b) when $C_1$ is sufficiently large. Then the theorem follows.
\end{proof}

\subsection{$m=1$} When $m=1$, let
\begin{eqnarray*}
V_0=0;\qquad V_1=1;\qquad n_1=n;\qquad x_s^{(1)}=x_s,\ \forall s\in[n].
\end{eqnarray*}
By (\ref{dfc}) we obtain $\chi=\alpha_t$. 
Write $F_{p_t,\alpha_t}^{(i)}=F_{\chi}^{(i)}$.
For $i\in[I]$, note that 
\begin{eqnarray*}
&&\gamma_i-\rho\theta_i=-\chi\rho\theta_i;\\
&&\eta_i-\rho\left[\sum_{g=i+1}^{I}\theta_g\right]=-\chi
\rho\left[\sum_{g=i+1}^{I}\theta_g\right].
\end{eqnarray*}
equation (\ref{p1}) gives
\begin{align*}
\kappa&=\frac{\rho\theta_i}{S_{\bm_i}^{-1}(\ln z)}-\rho\left[\sum_{g=i+1}^{I}\theta_g\right]\chi+1-\phi_{1,\chi}\\
&+z\chi\left(\sum_{s=1}^{n}\zeta_{R,+,s,p_t}\frac{x_{\si_0(1)}x_{s}}{1+zx_{\si_0(1)}x_{s}}+\sum_{s=1}^{n}\zeta_{L,+,s,p_t}\frac{x_{\si_0(1)}x_{s}}{1-zx_{\si_0(1)}x_{s}}-\frac{\rho\theta_i}{z-1}\right)\notag
\end{align*}
Note that equation (\ref{p1}) has double root in $z$ if and only if
\begin{eqnarray}
0=\frac{d F_{\chi}^{(i)}}{dz}.\label{de0}
\end{eqnarray}
Define
\begin{eqnarray*}
\Phi_i(t):&=&\prod_{k=0}^{d_{i+1}-d_i-1}\frac{t-\beta_{i,k}}{t-\gamma_{i,k}};
\end{eqnarray*}
and
\begin{small}
\begin{eqnarray*}
J_1(t):=\Phi_1(t)\left(\sum_{s=1}^{n}\zeta_{R,+,s,p_t}\frac{x_{\si_0(1)}x_{s}}{1+\Phi_1(t)x_{\si_0(1)}x_{s}}+\sum_{s=1}^{n}\zeta_{L,+,s,p_t}\frac{x_{\si_0(1)}x_{s}}{1-\Phi_1(t)x_{\si_0(1)}x_{s}}-\frac{\rho\theta_i}{\Phi_1(t)-1}\right)-\rho\left[\sum_{g=2}^{I}\theta_g\right];
\end{eqnarray*}
\end{small}
and for $i\in[I]$ and $i>1$,
\begin{small}
\begin{eqnarray*}
J_i(t):=\Phi_i(t)\left(-\frac{\rho\theta_i}{\Phi_i(t)-1}\right)-\rho\left[\sum_{g=i+1}^{I}\theta_g\right];
\end{eqnarray*}
\end{small}
Then the condition that (\ref{p1}) has double roots gives the parametric equation for $(\chi,\kappa)$ as follows:
\begin{eqnarray*}
\chi&=&-\frac{\rho\theta_i}{J_i'(t)};\\
\kappa&=&\rho\theta_i t-\frac{\rho\theta_iJ_i(t)}{J_i'(t)}+1-\phi_{1,\chi}.
\end{eqnarray*}

\begin{proposition}\label{p420}Let $i\in [I]$, and $\mathcal{C}_i$ be given by (\ref{dci}).  Then $\mathcal{C}_i$ is a cloud curve of rank 
\begin{enumerate}
\item $|J_1|\cdot|\Xi_p|$, if $i=1$;
\item $|J_i|$, if $i>1$.
\end{enumerate}
\end{proposition}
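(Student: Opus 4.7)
The plan is to adapt the dual-curve strategy from Theorem~\ref{t38} to the piecewise setting. First I will compute the parametric form of the dual curve $\mathcal{C}_i^\vee$. Differentiating the parametric equations
\[
\chi(t)=-\frac{\rho\theta_i}{J_i'(t)},\qquad \kappa(t)=\rho\theta_i\,t-\frac{\rho\theta_i J_i(t)}{J_i'(t)}
\]
gives $\chi'(t)=\rho\theta_i J_i''(t)/J_i'(t)^2$ and $\kappa'(t)=\rho\theta_i J_i(t)J_i''(t)/J_i'(t)^2$, so $d\kappa/d\chi = J_i(t)$ and $\chi(t)J_i(t)-\kappa(t)=-\rho\theta_i t$. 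Plugging into the usual duality formulas yields the clean parametrization
\[
\chi^\vee(t)=-\frac{J_i(t)}{\rho\theta_i t},\qquad \kappa^\vee(t)=-\frac{1}{\rho\theta_i t}.
\]
This expresses $\mathcal{C}_i^\vee$ directly in terms of the single rational function $J_i$, reducing the problem to a careful study of the real structure of $J_i$.

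Next, I will determine the degree $n'$ of $\mathcal{C}_i^\vee$. When $i>1$, the function $J_i(t)$ is a rational function whose poles are exactly the zeros of $\Phi_i(t)-1$, together with the $\gamma_{i,k}$'s, giving $|J_i|$ genuine singularities (after cancellation), hence degree $n'=|J_i|$. When $i=1$, the expression for $J_1(t)$ additionally contains the factors $1\pm \Phi_1(t)x_{\sigma_0(1)}x_s$ in denominators; clearing $\Phi_1$-denominators multiplies the degree by the number of $\pm 1/(x_{\sigma_0(1)}x_s)$-type singular points, producing $n'=|J_1|\cdot|\Xi_{p_t}|$. In both cases I identify the singular set $\mathcal{P}_i$ of $J_i$ in $\RR\cup\{\infty\}$ and verify (as in Lemma~\ref{lnr}) that $|\mathcal{P}_i|=n'$.

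To show that $\mathcal{C}_i^\vee$ is a winding curve I will argue, exactly as in Theorem~\ref{t38}, that any real line $\chi^\vee=c\kappa^\vee+d$ meets $\mathcal{C}_i^\vee$ in at least $n'-1$ points (counted with multiplicity). Substituting the dual parametrization reduces this to counting real solutions of
\[
J_i(t)=c-\rho\theta_i t\,d.
\]
Using Assumption~\ref{ap14} together with a signed-derivative argument analogous to \eqref{adl}, I will show that $J_i(t)$ is strictly monotone between consecutive points of $\mathcal{P}_i$ and runs through all of $\RR$ on each bounded component; this produces at least one real root per bounded interval of $\RR\setminus\mathcal{P}_i$, giving $n'-1$ intersections. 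For vertical lines $\kappa^\vee=c$, I pass to homogeneous coordinates; the remaining intersection multiplicities at $[1:0:0]$ are read off from the orders of the poles of $J_i/t$, and they sum to $n'$, completing the winding-curve check. Hence $\mathcal{C}_i=(\mathcal{C}_i^\vee)^\vee$ is a cloud curve of the claimed rank.

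The main obstacle will be the $i=1$ case. Unlike the $i>1$ case, where $J_i$ is essentially a M\"obius transformation in $\Phi_i$, the function $J_1$ combines $\Phi_1(t)$ with the entire collection of poles coming from $\Xi_{p_t}$, so the degree count and the monotonicity between consecutive singularities require a careful bookkeeping of how the $\beta_{1,k},\gamma_{1,k}$ interlace with the preimages under $\Phi_1$ of the points $\pm 1/(x_{\sigma_0(1)}x_s)$. I will handle this by first analyzing $J_1$ as a function of $w=\Phi_1(t)$ (getting a degree-$|\Xi_{p_t}|$ rational function whose real structure is controlled by Assumption~\ref{ap14} as in Theorem~\ref{t38}), and then pulling back through the degree-$|J_1|$ covering $t\mapsto\Phi_1(t)$, which by the interlacing in \eqref{brit} is monotone on each bounded component of its domain. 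Combining these two monotonicity statements delivers the full count $n'-1$ of real intersections and hence the winding property.
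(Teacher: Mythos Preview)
Your approach is essentially what the paper intends: its own proof is merely a citation to Proposition~3.16 of \cite{ZL202}, and the method there is exactly the dual-curve/winding-curve argument you outline, modeled on Theorem~\ref{t38}. The parametrization of $\mathcal{C}_i^\vee$, the reduction of line intersections to $J_i(t)=\text{(affine in $t$)}$, and the two-step monotonicity for $i=1$ (first in $w=\Phi_1(t)$, then pulled back through $t\mapsto\Phi_1(t)$ using the interlacing \eqref{brit}) are all the right moves.

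Two small corrections. First, for $i>1$ the points $\gamma_{i,k}$ are \emph{not} poles of $J_i$: at $t=\gamma_{i,k}$ one has $\Phi_i\to\infty$ and hence $\Phi_i/(\Phi_i-1)\to 1$, so these are removable. The genuine finite poles of $J_i$ are only the zeros of $\Phi_i-1$, and since the numerator $\prod_k(t-\beta_{i,k})-\prod_k(t-\gamma_{i,k})$ has degree $|J_i|-1$ (leading terms cancel), there are $|J_i|-1$ of them; together with the simple pole of $J_i$ at $t=\infty$ (coming from $\Phi_i(\infty)=1$) this still gives degree $|J_i|$, so your conclusion stands. Second, a harmless sign slip: with the duality formulas from Section~\ref{sect:sc} one gets $\chi^\vee(t)=J_i(t)/(\rho\theta_i t)$ rather than $-J_i(t)/(\rho\theta_i t)$; this does not affect the intersection count.
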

\begin{proof}The proposition follows from similar arguments as the proof of Proposition 3.16 in \cite{ZL202}.
\end{proof}

\begin{theorem}\label{t421}Consider dimer coverings on a rail yard graph $RYG(l^{(\epsilon)},r^{(\epsilon)},\underline{a}^{(\epsilon)},\underline{b}^{(\epsilon)})$, conditional on the left and right boundary condition $\lambda^{(l,\epsilon)}$ and $\emptyset$, respectively. Suppose that (\ref{c151}) and Assumptions \ref{ap14}, \ref{ap41}, \ref{ap32}, \ref{ap428} and \ref{ap36} hold. Assume $m=1$. Then the frozen boundary is given by $\cup_{i\in [I]}\mathcal{C}_i$, where for $i,j\in [I]$ and $i\neq j$, $\mathcal{C}_i\cap \mathcal{C}_j=\emptyset$, and $\mathcal{C}_i$ is a cloud curve for all $i\in [I]$.
\end{theorem}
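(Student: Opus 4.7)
The plan is to realize Theorem \ref{t421} as a direct corollary of the more general Theorem \ref{t419} together with Proposition \ref{p420}, which is the special case $m=1$ of the cloud-curve analysis. All hypotheses required by Theorem \ref{t419} (namely (\ref{c151}) and Assumptions \ref{ap14}, \ref{ap41}, \ref{ap32}, \ref{ap428}, \ref{ap36}) are carried over verbatim, and the additional assumption $m=1$ only further specializes the setup rather than weakening it. Thus the first step is just to observe that under $m=1$ we may set $V_0=0$, $V_1=1$, $n_1=n$, $x_s^{(1)}=x_s$, so that the unique slab index is $p_t=1$ and $\chi=\alpha_t$ by (\ref{dfc}); this reduces the expressions for $Q_i$, $R_{p_t,\alpha_t,i}$, $U_{p_t,\alpha_t,i}$ to the simpler forms used in Section 4.5, and Theorem \ref{t419} immediately yields that the frozen boundary decomposes as the disjoint union $\bigcup_{i\in[I]}\mathcal{C}_i$ of the loci on which $F^{(i)}_{p_t,\alpha_t}(z)=\kappa$ has a double real root.

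The second step is to feed each component $\mathcal{C}_i$ into Proposition \ref{p420}. The double-root condition is equivalent to the pair of equations (\ref{ke}) together with (\ref{de0}), and the substitution $z=\Phi_i(t)$ coming from (\ref{ze}) converts this pair into the explicit rational parametrization
\begin{equation*}
\chi(t) = -\frac{\rho\theta_i}{J_i'(t)}, \qquad \kappa(t) = \rho\theta_i\, t - \frac{\rho\theta_i\, J_i(t)}{J_i'(t)},
\end{equation*}
derived just before Proposition \ref{p420}. Proposition \ref{p420}, whose proof parallels Proposition 3.16 of \cite{ZL202}, then identifies $\mathcal{C}_i$ as a cloud curve of the asserted rank by computing the dual parametrization and counting intersections with generic lines and with the distinguished tangents $\chi=0$, $\chi=1$.

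The only point needing verification is that the hypotheses of Theorem \ref{t419} are actually in force under the $m=1$ assumption. The ``at most one pair of complex conjugate roots'' condition is supplied by Proposition \ref{p72}, whose proof proceeds by writing the root equation in the form $z=B_i(z,\kappa)$ via (\ref{dzb}) and using the monotonicity of $A_i(\cdot,\kappa)$ from Lemma \ref{lnr}. Disjointness of the $\mathcal{C}_i$ is supplied by Lemma \ref{l417} combined with the separation bound in Assumption \ref{ap428}(4b), provided $C_1$ is taken sufficiently large; this is the only quantitative input beyond what is already present in Theorems \ref{t412} and \ref{t419}.

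The main obstacle is therefore not located in the proof of Theorem \ref{t421} itself, which is essentially a matter of assembling Theorem \ref{t419} with Proposition \ref{p420}; rather, the technical heart of the argument lies in verifying that the specific parametric curves produced here have the winding/cloud structure of \cite{KO07}. That verification, already encapsulated in Proposition \ref{p420} via the analog of Proposition 3.16 of \cite{ZL202}, is what I would cite rather than reprove, so the proof proposal collapses to a one-line citation followed by the observation that the $m=1$ specialization is consistent.
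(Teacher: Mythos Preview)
Your proposal is correct and matches the paper's own proof, which is the single line ``The theorem follows from Theorem \ref{t419} and Proposition \ref{p420}.'' You have simply expanded this citation with the surrounding context (the $m=1$ specialization, the parametrization, and the verification of hypotheses), but the logical structure is identical.
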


\begin{proof}The theorem follows from Theorem \ref{t419} and Proposition \ref{p420}.
\end{proof}

\begin{example}\label{ex4}Consider a rail-yard graph with $m=1$ and $n_1=4$. Let
\begin{eqnarray*}
V_0=0;\qquad V_1=1.
\end{eqnarray*}
Let
\begin{eqnarray*}
(a_1,b_1)=(L,-),\qquad (a_2,b_2)=(R,+),\qquad (a_3,b_3)=(L,+),\qquad (a_4,b_4)=(L,-).
\end{eqnarray*}
and
\begin{eqnarray*}
x_{1,\epsilon}^{(1)}=1,\qquad x_{2,\epsilon}^{(1)}=1/2,\qquad x_{3,\epsilon}^{(1)}=1/3;\qquad 0<x_{4,\epsilon}^{(1)}<e^{-\alpha N^{(\epsilon)}}
\end{eqnarray*}

Assume $N_{L,-}^{(\epsilon)}$ is an integer multiple of $12$. Assume the boundary partition $\lambda^{(l,\epsilon)}$ satisfies
\begin{eqnarray*}
&&\lambda_1=\lambda_2=\ldots=\lambda_{\frac{N_{L,-}^{(\epsilon)}}{4}}=\mu_1^{(\epsilon)}\approx 6N_{L,-}^{(\epsilon)}\\
&&\lambda_{\frac{N_{L,-}^{(\epsilon)}}{4}+1}=\lambda_{\frac{N_{L,-}^{(\epsilon)}}{4}+2}=\ldots=\lambda_{\frac{N_{L,-}^{(\epsilon)}}{2}}=\mu_2^{(\epsilon)}\approx 5N_{L,-}^{(\epsilon)}\\
&&\lambda_{\frac{N_{L,-}^{(\epsilon)}}{2}+1}(N)=\lambda_{\frac{N_{L,-}^{(\epsilon)}}{2}+2}=\ldots=\lambda_{\frac{2N_{L,-}^{(\epsilon)}}{3}}=\mu_3^{(\epsilon)}\approx 2N_{L,-}^{(\epsilon)}\\
&&\lambda_{\frac{2N_{L,-}^{(\epsilon)}}{3}+1}=\lambda_{\frac{N_{L,-}^{(\epsilon)}}{2}+2}=\ldots=\lambda_{\frac{5N_{L,-}^{(\epsilon)}}{6}}=\mu_4^{(\epsilon)}\approx N_{L,-}^{(\epsilon)}\\
&&\lambda_{\frac{5N_{L,-}^{(\epsilon)}}{6}+1}=\lambda_{\frac{5N_{L,-}^{(\epsilon)}}{6}+2}=\ldots=\lambda_{N_{L,-}^{(\epsilon)}}=\mu_5^{(\epsilon)}=0.
\end{eqnarray*}

Then
\begin{align*}
\Phi_1(t):&=\frac{(t-11)(t-13.5)}{(t-11.5)(t-14)};\\
\Phi_2(t):&=\frac{t\left(t-\frac{7}{3}\right)\left(t-\frac{14}{3}\right)}{\left(t-\frac{1}{3}\right)\left(t-\frac{8}{3}\right)(t-5)};
\end{align*}
\begin{align*}
\phi_{1,\chi}=\frac{1}{2}(1-\chi)
\end{align*}
and
\begin{align*}
J_1(t):&=\frac{\Phi_1(t)}{4}\left(\frac{2}{2+\Phi_1(t)}+\frac{3}{3-\Phi_1(t)}-\frac{1}{\Phi_1(t)-1}\right)-\frac{1}{4};\\
J_2(t):&=-\frac{\Phi_2(t)}{4(\Phi_2(t)-1)}-\frac{1}{4}.
\end{align*}

%where $I=2$ and
%\begin{eqnarray*}
%&&\gamma_i-\rho\theta_i=-\frac{\chi}{4};\qquad \mathrm{for}\ i\in\{1,2\}\\
%&&\eta_1-\rho\theta_2=-\frac{\chi}{4};\qquad\eta_2=0;\\
%&&\rho\theta_1=\frac{1}{4};\qquad \rho\theta_2=\frac{1}{4}.
%\end{eqnarray*}
Then the frozen boundary is a union $\mathcal{C}_1\cup \mathcal{C}_2$, where $\mathcal{C}_i$ has parametric equations given by 
\begin{eqnarray}
\chi_i(t_i)&=&-\frac{1}{4J_i'(t)}.\label{cix}\\
\kappa_i(t_i)&=&\frac {t_i}{4}-\frac{J_i(t)}{4J_i'(t)}+\frac{1}{2}+\frac{\chi_i(t_i)}{2}.\label{ciy}
\end{eqnarray}
See Figure \ref{fig:fbr} for a picture of the frozen boundary.

\begin{figure}
 \includegraphics[width=0.6\textwidth]{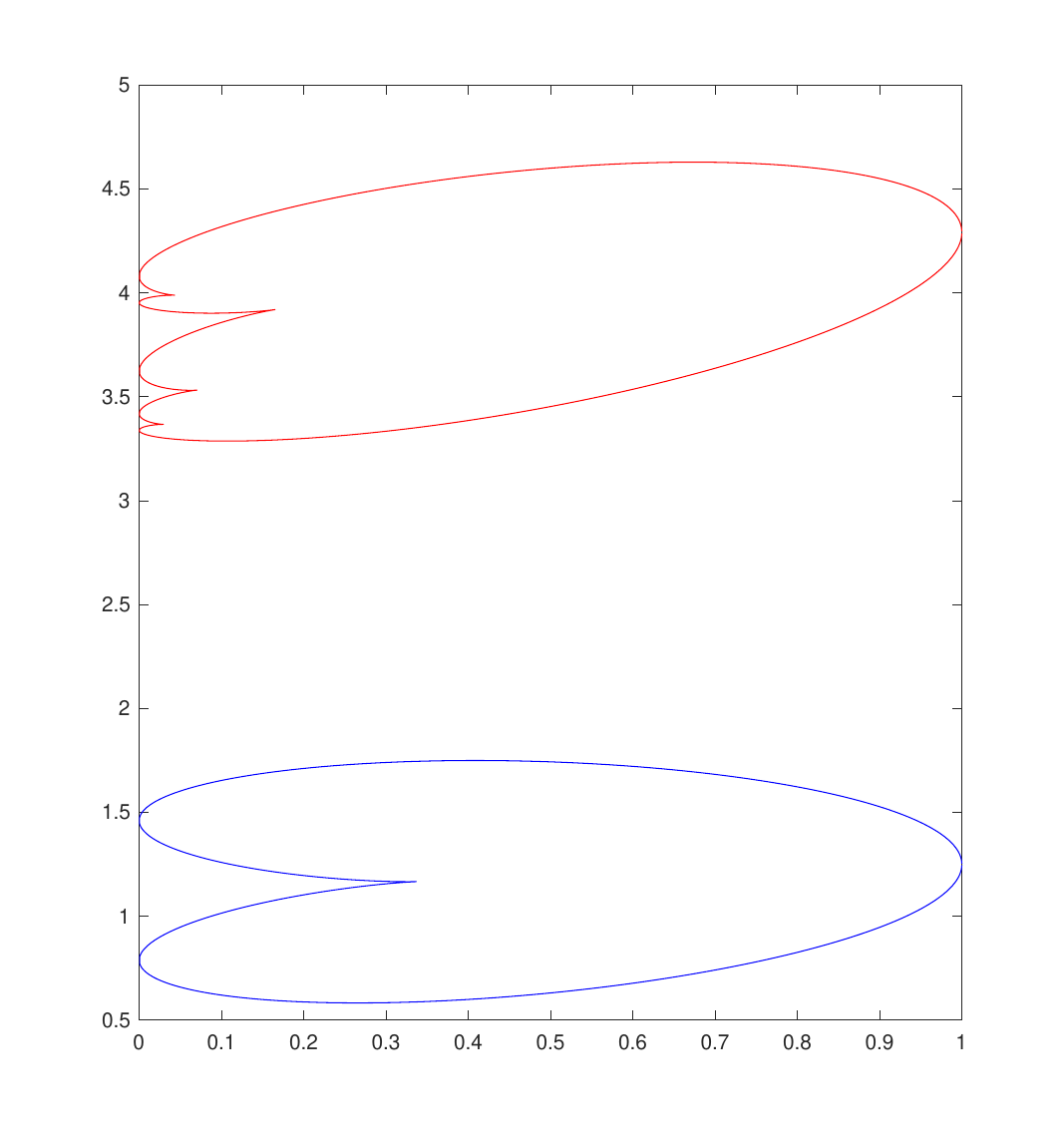}
\caption{Frozen boundary of Example \ref{ex4}}
\label{fig:fbr}
\end{figure}
\end{example}
\appendix

\section{Technical Results}\label{sect:AA}

\begin{lemma}\label{la1}Let $n$ be a positive integer and let $g(z)$ be an analytic function defined in a neighborhood of 1. Then
\begin{eqnarray*}
\lim_{(z_1,\ldots,z_n)\rightarrow (1,\ldots,1)}\left(\sum_{i=1}^n\frac{g(z_i)}{\prod_{j\in[n],j\neq i}(z_i-z_j)}\right)=\left.\frac{\partial^{n-1}}{\partial z^{n-1}}\left(\frac{g(z)}{(n-1)!}\right)\right|_{z=1}
\end{eqnarray*}
\end{lemma}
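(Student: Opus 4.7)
The plan is to recognize the left-hand side as an $n$-th divided difference of $g$ at the nodes $z_1,\ldots,z_n$, and then evaluate its confluent limit via Cauchy's integral formula. First I would observe that, when the $z_i$ are pairwise distinct and all lie in a small neighborhood of $1$ on which $g$ is analytic, the residue theorem yields
\begin{equation*}
\sum_{i=1}^n\frac{g(z_i)}{\prod_{j\in[n],\,j\neq i}(z_i-z_j)}
=\frac{1}{2\pi\mathbf{i}}\oint_{C}\frac{g(z)\,dz}{\prod_{i=1}^n(z-z_i)},
\end{equation*}
where $C$ is any positively oriented simple closed contour contained in the domain of analyticity of $g$ and enclosing all of $z_1,\ldots,z_n$. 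This identity is exactly the statement that the sum of residues at the simple poles $z_1,\ldots,z_n$ equals the contour integral.

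Next I would fix such a contour $C$ once and for all, say a small positively oriented circle $|z-1|=r$ of radius $r$ small enough that the closed disc $\overline{\{|z-1|\le 2r\}}$ lies in the domain of analyticity of $g$, and restrict attention to tuples $(z_1,\ldots,z_n)$ with $|z_i-1|<r/2$; the contour then still encloses all the $z_i$. On $C$, the integrand depends jointly continuously on $(z,z_1,\ldots,z_n)$ and converges uniformly in $z\in C$, as $(z_1,\ldots,z_n)\to(1,\ldots,1)$, to $g(z)/(z-1)^n$. Interchanging the limit and the contour integral therefore gives
\begin{equation*}
\lim_{(z_1,\ldots,z_n)\to(1,\ldots,1)}
\frac{1}{2\pi\mathbf{i}}\oint_{C}\frac{g(z)\,dz}{\prod_{i=1}^n(z-z_i)}
=\frac{1}{2\pi\mathbf{i}}\oint_{C}\frac{g(z)\,dz}{(z-1)^n},
\end{equation*}
and the Cauchy integral formula for higher derivatives evaluates the right-hand side as $g^{(n-1)}(1)/(n-1)!$, which is the claimed expression $\left.\frac{\partial^{n-1}}{\partial z^{n-1}}\!\left(\frac{g(z)}{(n-1)!}\right)\right|_{z=1}$.

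The only subtlety is that the left-hand side of the lemma is a priori defined only on the locus where the $z_i$ are pairwise distinct, so strictly speaking the statement concerns its continuous extension to the diagonal. The contour-integral representation handles this automatically, since the right-hand side of the residue identity is manifestly jointly continuous in $(z_1,\ldots,z_n)$ in a full neighborhood of $(1,\ldots,1)$, not only on the distinct-coordinate locus. An alternative route would be induction on $n$ via the divided-difference recursion $g[z_1,\ldots,z_n]=(g[z_1,\ldots,z_{n-1}]-g[z_2,\ldots,z_n])/(z_1-z_n)$ together with a Taylor expansion, but this requires more bookkeeping; the residue approach above is cleaner, and I expect the only step needing real care is justifying the uniform convergence on $C$ (which is immediate from compactness of $C$ and continuity of the integrand in all variables).
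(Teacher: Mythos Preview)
Your proof is correct. The paper does not actually prove this lemma itself; it simply cites Lemma~5.5 of \cite{bg}. Your argument via the residue theorem and Cauchy's integral formula is the standard self-contained proof of this divided-difference identity, and it is cleanly executed: the key observation that the contour-integral representation is jointly continuous in $(z_1,\ldots,z_n)$ neatly handles the extension to the diagonal, and the uniform convergence on the fixed contour $C$ is indeed immediate from compactness. There is nothing to add.
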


\begin{proof}See Lemma 5.5 of \cite{bg}.
\end{proof}

\begin{lemma}
  \label{p25}
  Assume that $(\lambda(N)\in \YY_N$, $N=1,2,\ldots)$ is a sequence of partitions, such that there exists a piecewise continuous function $f(t)$ and a constant $C>0$ such that
  \begin{equation*}
    \lim_{N\rightarrow\infty} \frac{1}{N}
    \sum_{j=1}^{N}
    \left|\frac{\lambda_j(N)}{N}-f\left(\frac{j}{N}\right)\right| = 0,
    \quad
    \text{and}
    \quad
    \sup_{1\leq j\leq N}
    \left|\frac{\lambda_j(N)}{N}-f\left(\frac{j}{N}\right)\right|<C\ \text{for
    all $N\geq1$}.
  \end{equation*}
Moreover, assume that
\begin{equation*}
\lim_{N\rightarrow\infty}m(\lambda(N))=\bm.
\end{equation*}

Then for each fixed $k\in\NN$, there is a small open complex neighborhood of
$(1,\ldots,1)\in\mathbb{C}^k$, such that the following
convergence occurs uniformly in this neighborhood:
\begin{equation*}
\lim_{N\rightarrow\infty}
\frac{1}{N}
\log\left(%
\frac{%
  s_{\lambda(N)}\left(u_1,\ldots,u_k,1,\ldots,1\right)
}{%
  s_{\lambda(N)}\left(1,\ldots,1\right)
}
\right)
=H_{\bm}(u_1)+\cdots+H_{\bm}(u_k).
\end{equation*}
\end{lemma}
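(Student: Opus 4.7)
The plan is to reduce the $k$-variable statement to the single-variable case $k=1$ and then prove that case via a contour-integral representation followed by a saddle-point analysis.

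\textbf{Step 1 (single-variable integral representation).} Set $l_j:=\lambda_j(N)+N-j$, so that the hypotheses state that $m(\lambda(N))=\frac{1}{N}\sum_{j=1}^N\delta_{l_j/N}$ converges to $\bm$, with a strong $L^1$ control on the deviation. By the Weyl character formula followed by the confluent L'H\^opital limit at $x_2=\cdots=x_N=1$, one obtains
$$
\frac{s_{\lambda(N)}(u,1^{N-1})}{s_{\lambda(N)}(1^N)}
\;=\;\frac{\prod_{j=0}^{N-2}j!}{(u-1)^{N-1}\prod_{i<j}(l_j-l_i)}
\cdot\frac{1}{2\pi\mathbf{i}}\oint_{\Gamma}\frac{u^{z}\,dz}{\prod_{j=1}^N(z-l_j)},
$$
where $\Gamma$ is a positively oriented loop enclosing $l_1,\dots,l_N$ (the residue sum $\sum_i u^{l_i}\prod_{j\ne i}(l_i-l_j)^{-1}$ is rewritten as this contour integral).

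\textbf{Step 2 (saddle-point and identification).} Rescale $z=Nw$; the integrand becomes $\exp(N\Psi_N(w))$ with
$$
\Psi_N(w)=w\log u-\tfrac{1}{N}\sum_{j=1}^N\log(w-l_j/N).
$$
The hypotheses imply $\tfrac{1}{N}\sum_j\log(w-l_j/N)\to\int\log(w-s)\,d\bm(s)$ uniformly on compact subsets of $\CC\setminus\mathrm{supp}(\bm)$. Deforming $\Gamma$ through the critical point $w^{\ast}$ determined by $\log u=\mathrm{St}_\bm(w^{\ast}):=\int(w^{\ast}-s)^{-1}d\bm(s)$ along a steepest-descent direction, and combining with Stirling's asymptotic for $\prod_{j=0}^{N-2}j!$, the log-energy asymptotic for $\prod_{i<j}(l_j-l_i)$ (controlled by $\iint\log|s-t|\,d\bm(s)d\bm(t)$), and the factor $(u-1)^{-(N-1)}$, one obtains
$$
\frac{1}{N}\log\frac{s_{\lambda(N)}(u,1^{N-1})}{s_{\lambda(N)}(1^N)}\;\longrightarrow\;w^{\ast}\log u-\int\log(w^{\ast}-s)\,d\bm(s)-\log(u-1)-1.
$$
Using $\mathrm{St}_\bm(w)=S_\bm(1/w)$ for $S_\bm$ as in~\eqref{smi}, we identify $w^{\ast}=1/S_\bm^{-1}(\log u)$. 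Differentiating the right-hand side in $u$ yields $w^{\ast}/u-1/(u-1)$, which matches the derivative of $H_\bm(u)$ in~\eqref{hmi} via $R_\bm(\log u)=w^{\ast}-1/\log u$. Since both expressions vanish at $u=1$, they coincide.

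\textbf{Step 3 ($k\ge 2$).} For general $k$, the confluent Weyl formula applied to the $N-k$ unit variables produces a $k\times k$ determinant $\det[\Phi^{(N)}_j(u_i)]$ divided by $V(u_1,\dots,u_k)\prod_i(u_i-1)^{N-k}$ and a sub-exponential prefactor. Each entry $\Phi^{(N)}_j(u_i)$ admits the same saddle-point analysis as in Step 2; a Laplace-type expansion over the $k!$ permutations of the determinant shows that every non-vanishing contribution has the same leading exponential $\exp\!\bigl(N\sum_i H_\bm(u_i)\bigr)$, and the antisymmetric Vandermonde normalization absorbs the permutation signs (the limit is a symmetric analytic function), yielding the claimed additive limit uniformly on a complex neighborhood of $(1,\dots,1)$.

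\textbf{Main obstacle.} The principal difficulty is Step 2: the contour $\Gamma$ must be deformed into a steep-descent path that encircles all poles $l_j$ without crossing or hugging the accumulation set of $\{l_j/N\}$ on $\mathrm{supp}(\bm)$. The strong $L^1$-closeness assumption (which is genuinely stronger than weak convergence of $m(\lambda(N))$) is precisely what delivers a uniform bound on $\bigl|\tfrac{1}{N}\sum_j\log(w-l_j/N)-\int\log(w-s)\,d\bm(s)\bigr|$ on compacta bounded away from $\mathrm{supp}(\bm)$, and hence uniform validity of the saddle-point approximation in $u$ throughout a complex neighborhood of $1$. A secondary, purely algebraic, obstacle is translating the saddle value into the closed form~\eqref{hmi}, which is handled by the derivative-matching argument above using the definition $R_\bm(z)=1/S_\bm^{-1}(z)-1/z$.
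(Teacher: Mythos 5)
The paper's own proof of Lemma~\ref{p25} is a one-line citation to Theorem~4.2 of \cite{bg}, so your proposal, which reconstructs the Bufetov--Gorin contour-integral and steepest-descent method, is in spirit the argument that citation invokes; the high-level structure and the derivative-matching identification of the limit with $H_{\bm}$ in \eqref{hmi} are sound. But two things do not hold up. First, the integral representation in Step~1 is wrong as written: already for $N=2$, $\lambda=(1,0)$, $l=(2,0)$, the right-hand side evaluates to $-\frac{u+1}{4}$ while $s_{(1,0)}(u,1)/s_{(1,0)}(1,1)=\frac{u+1}{2}$. The correct ratio formula is $\frac{s_{\lambda}(u,1^{N-1})}{s_{\lambda}(1^{N})}=\frac{(N-1)!}{(u-1)^{N-1}}\cdot\frac{1}{2\pi\mathbf{i}}\oint_{\Gamma}\frac{u^{z}\,dz}{\prod_{j=1}^{N}(z-l_j)}$: there is no Vandermonde factor $\prod_{i<j}(l_j-l_i)$ in the prefactor, and the numerator constant is $(N-1)!$, not the superfactorial $\prod_{j=0}^{N-2}j!$. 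Your prefactor equals, up to sign, $1/\bigl((N-1)!^{2}\,s_{\lambda}(1^{N})\bigr)$ times the correct one, and $\frac{1}{N}\log$ of that extra factor diverges in $N$ (since $\frac{1}{N}\log s_{\lambda}(1^{N})$ typically grows like $N$, and $\frac{1}{N}\log(N-1)!$ like $\log N$). Thus the finite limit you assert in Step~2 does not actually follow from your Step~1 formula, and the ``log-energy cancellation'' you invoke cannot rescue it; you have implicitly computed with the corrected formula.

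Second, Step~3 is where the genuine difficulty lies in \cite{bg}, and your outline does not address it. Near the diagonal $u_1=\cdots=u_k=1$ both the confluent determinant $\det[\Phi^{(N)}_j(u_i)]$ and the Vandermonde $V(u_1,\ldots,u_k)$ vanish, so the leading exponential of the quotient is not read off term-by-term from a permutation expansion: one must show that the $k!$ terms, each of the same exponential order $\exp\!\bigl(N\sum_i H_{\bm}(u_i)\bigr)$, do not cancel at that order, and do so uniformly as the $u_i$ collide. The cited reference handles $k\ge 2$ by an inductive multiplicativity reduction --- integrating out one variable at a time via a further contour-integral identity and reusing the single-variable asymptotics --- which is precisely the mechanism your sketch omits. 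As written, the proposal establishes only the $k=1$ case, and that only after the Step~1 formula is repaired.
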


\begin{proof}See Theorem 4.2 of \cite{bg}.
\end{proof}

\begin{lemma}
  \label{l33}
  \begin{enumerate}
  \item
  Let
  \begin{equation*}
    R(z)=\frac{(z-u_1)(z-u_2)\cdots(z-u_h)}{(z-v_1)(z-v_2)\cdots(z-v_h)},
  \end{equation*}
  where $\{u_i\}$ and $\{v_i\}$ are two sets of real numbers, and $h$ is a positive integer.
  \begin{itemize}
    \item If $\{u_i\}$ and $\{v_i\}$ satisfy
      \begin{equation*}
	v_1<u_1<v_2<u_2<\cdots<v_h<u_h.
      \end{equation*}
      Then $R(z)$ is monotone increasing in each one of the following intervals
      \begin{equation*}
	(-\infty,v_1), (v_1,v_2),\ldots, (v_{h-1},v_h),(v_h,\infty).
      \end{equation*}
    \item If $\{u_i\}$ and $\{v_i\}$ satisfy
      \begin{equation*}
	u_1<v_1<u_2<v_2<\cdots<u_h<v_h.
      \end{equation*}
      Then $R(z)$ is monotone decreasing in each one of the following intervals
      \begin{equation*}
	(-\infty,v_1), (v_1,v_2),\ldots, (v_{h-1},v_h),(v_h,\infty).
      \end{equation*}
  \end{itemize}
\item   Let
  \begin{equation*}
    R(z)=\frac{(z-u_1)\cdots(z-u_{h-1})}{(z-v_1)\cdots(z-v_h)}
    \text{with}\ v_1<u_1<\cdots<u_{h-1}<v_h
  \end{equation*}
  Then $R(z)$ is monotone decreasing in each one of the following intervals
  \begin{equation*}
	(-\infty,v_1), (v_1,v_2),\ldots, (v_{h-1},v_h),(v_h,\infty).
      \end{equation*}
\item Let  
  \begin{equation*}
    R(z)=\frac{(z-u_1)\cdots(z-u_{h+1})}{(z-v_1)\cdots(z-v_h)}
    \text{with}\ u_1<v_1<\cdots<v_{h}<u_{h+1}.
  \end{equation*}
   Then $R(z)$ is monotone increasing in each one of the following intervals
      \begin{equation*}
	(-\infty,v_1), (v_1,v_2),\ldots, (v_{h-1},v_h),(v_h,\infty).
      \end{equation*}
  \end{enumerate}
\end{lemma}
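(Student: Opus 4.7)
The common strategy for all three parts is the partial fraction decomposition. Because each pole $v_i$ is simple, we can write
\begin{equation*}
R(z) = P(z) + \sum_{i=1}^{h} \frac{c_i}{z-v_i},
\qquad c_i = \lim_{z\to v_i}(z-v_i)R(z) = \frac{\prod_j (v_i-u_j)}{\prod_{j\ne i}(v_i-v_j)},
\end{equation*}
where $P(z)$ is the polynomial part: $P=1$ in part (1) (degree of numerator equals degree of denominator, with leading behavior $R(z)\to 1$), $P=0$ in part (2) (numerator of lower degree), and $P(z)=z+a$ for an explicit constant $a$ in part (3) (numerator of degree one more). Differentiating,
\begin{equation*}
R'(z) = P'(z) - \sum_{i=1}^{h}\frac{c_i}{(z-v_i)^2},
\end{equation*}
so in parts (1) and (2), where $P'\equiv 0$, the sign of $R'(z)$ on every interval between (or outside) consecutive $v_i$'s is determined solely by the common sign of the $c_i$'s; in part (3), we have $P'(z)\equiv 1$, so we need each $c_i<0$ (uniform sign) for $R'>0$ throughout.

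The core of the proof is therefore to sign each $c_i$ from the interlacing hypothesis by counting negative factors in the numerator and denominator of the residue formula. For the first subcase of part (1), with $v_1<u_1<\cdots<v_h<u_h$, one checks that $v_i-u_j<0$ exactly when $j\ge i$ (which gives $h-i+1$ negative factors upstairs) and $v_i-v_j<0$ exactly when $j>i$ (which gives $h-i$ negative factors downstairs), so the total parity is odd and $c_i<0$ for every $i$. This yields $R'(z)>0$ on each interval. In the second subcase, the same bookkeeping produces even parity everywhere and $c_i>0$, giving $R'(z)<0$. Part (2) is analogous: with $v_1<u_1<v_2<\cdots<u_{h-1}<v_h$, one obtains $h-i$ negative factors upstairs and $h-i$ downstairs, so $c_i>0$ uniformly and $R'<0$. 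Part (3), with $u_1<v_1<\cdots<v_h<u_{h+1}$, gives $h+1-i$ negative factors upstairs and $h-i$ downstairs, so $c_i<0$ uniformly; combined with $P'=1$, this forces $R'(z)=1+\sum|c_i|/(z-v_i)^2>0$ on every interval.

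The main step to execute carefully is the parity count of negative factors in $c_i$ in each of the four configurations: once the interlacing relation is drawn on the real line, the count of $j$'s with $u_j>v_i$ or $v_j>v_i$ is immediate, but one must not confuse the two interlacing patterns in part (1) (they differ by which sequence starts first) and one must treat the two extra cases (one fewer and one extra $u$) in parts (2)–(3) separately. The only mildly nontrivial point is part (3), where one must first isolate the polynomial part $P(z)=z+a$ before applying the residue argument; this is a routine polynomial division. No further tools are needed, and the conclusion on each of the open intervals $(-\infty,v_1),(v_1,v_2),\ldots,(v_h,\infty)$ follows at once from the fact that $R'$ has constant sign there.
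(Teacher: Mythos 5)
Your proof is correct. The paper states Lemma~\ref{l33} in the appendix without supplying a proof (it is treated as a routine calculus fact), so there is no argument to compare against; your partial-fraction approach is a clean, self-contained way to establish it. I verified the residue sign counts in all four configurations: with $v_1<u_1<\cdots<v_h<u_h$ you get $h-i+1$ negative factors in $\prod_j(v_i-u_j)$ against $h-i$ in $\prod_{j\neq i}(v_i-v_j)$, giving $c_i<0$ and $R'>0$; with $u_1<v_1<\cdots<u_h<v_h$ both counts are $h-i$, giving $c_i>0$ and $R'<0$; part (2) gives $c_i>0$, $P'=0$, $R'<0$; and part (3) gives $c_i<0$, $P'=1$, so $R'=1+\sum|c_i|/(z-v_i)^2>0$. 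The key structural observation — that the interlacing hypothesis forces all residues $c_i$ to share one sign, so $R'$ has constant sign on every interval between consecutive poles as well as on the two unbounded intervals — is exactly what makes the lemma go through, and you have identified and executed it correctly.
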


\bigskip
\bigskip

\noindent\textbf{Acknowledgements.} This material is based upon work supported by the National Science Foundation under Grant No. DMS-1928930 while ZL participated in a program hosted by the Mathematical Sciences Research Institute in Berkeley, California, during the Fall 2021 semester.  ZL acknowledges support from National Science Foundation grant 1608896 and Simons Foundation grant 638143. The author thanks anonymous reviewers for valuable comments on improving the readability of the paper.

\bibliography{fpmm2}

\begin{thebibliography}{10}

\bibitem{ADPZ20}
K.~Astala, E.~Duse, I.~Prause, and X.~Zhong.
\newblock Dimer models and conformal structures.
\newblock https://arxiv.org/pdf/2004.02599.

\bibitem{bbb14}
D.~Betea, C.~Boutillier, J.~Bouttier, G.~Chapuy, S.~Corteel, and M.~Vuletic.
\newblock Perfect sampling algorithm for schur processes.
\newblock {\em Markov Processes and Related Fields, proceedings of Inhomogenous
  Random Systems 2015}, 3:381--418, 2018.

\bibitem{BF15}
A.~Borodin and P.~Ferrari.
\newblock Random tilings and markov chains for interlacing particles.
\newblock {\em Markov Process. Related Fields}, 24:419--451, 2018.

\bibitem{bbccr}
C.~Boutillier, J.~Bouttier, G.~Chapuy, S.~Corteel, and S.~Ramassamy.
\newblock Dimers on rail yard graphs.
\newblock {\em Annales de l'Institut Henri Poincar\'e D}, 4:479--539, 2017.

\bibitem{BL17}
C.~Boutillier and Z.~Li.
\newblock Limit shape and height fluctuations of random perfect matchings on
  square-hexagon lattices.
\newblock {\em Annales de Institut Fourier}.
\newblock \href{http://arxiv.org/abs/1709.09801}{arXiv:1709.09801}.

\bibitem{BCC17}
J.~Bouttier, G.~Chapuy, and S.~Corteel.
\newblock From aztec diamonds to pyramids: Steep tilings.
\newblock {\em Trans. Amer. Math. Soc.}, 369:5921--5959, 2017.

\bibitem{bg}
A.~Bufetov and V.~Gorin.
\newblock Representations of classical lie groups and quantized free
  convolution.
\newblock {\em Geom.Funct.Anal.}, 25:763--814, 2015.

\bibitem{bk}
A.~Bufetov and A.~Knizel.
\newblock Asymptotics of random domino tilings of rectangular {A}ztec diamond.
\newblock {\em Ann. Inst. H. Poincaré Probab. Statist.}, 54:1250--1290, 2018.

\bibitem{ckp}
H.~Cohn, R.~Kenyon, and Propp. J.
\newblock A variational principle for domino tilings.
\newblock {\em J. Amer. Math. Soc.}, 14:297--346, 2001.

\bibitem{EKLP92a}
N.~Elkies, G.~Kuperberg, M.~Larsen, and J.~Propp.
\newblock Alternating-sign matrices and domino tilings, i.
\newblock {\em J. Algebraic Combin.}, 1:111--132, 1992.

\bibitem{EKLP92b}
N.~Elkies, G.~Kuperberg, M.~Larsen, and J.~Propp.
\newblock Alternating-sign matrices and domino tilings, ii.
\newblock {\em J. Algebraic Combin.}, 1:111--132, 1992.

\bibitem{Fi66}
M.~Fisher.
\newblock On the dimer solution of planar {I}sing models.
\newblock {\em Journal of Mathematical Physics}, 7:1776--1781, 1966.

\bibitem{TF61}
Temperley H.N.V. and M.E. Fisher.
\newblock Dimer problem in statistical mechanics - an exact result.
\newblock {\em Philos. Mag.}, 6:1061--63, 1961.

\bibitem{KJ05}
K.~Johansson.
\newblock The arctic circle boundary and the airy process.
\newblock {\em Ann. Probab.}, 33:1--30, 2005.

\bibitem{Kas61}
P.W. Kasteleyn.
\newblock The statistics of dimers on a lattice.i. the number of dimer
  arrangements on a quadratic lattice.
\newblock {\em Physica}, 27:1209--1225, 1961.

\bibitem{Ken01}
R.~Kenyon.
\newblock Local statistics of lattice dimers.
\newblock {\em Ann. Inst. H. Poincar\'e, Probabilit\'es}, 33:591--618, 1997.

\bibitem{RK09}
R.~Kenyon.
\newblock Lectures on dimers.
\newblock In {\em Statistical mechanics, IAS/Park City Math.}, 16, pages
  191--231. Amer. Math. Soc., Providence, R.I., 2009.

\bibitem{KO07}
R.~Kenyon and A.~Okounkov.
\newblock Limit shapes and the complex {B}urgers equation.
\newblock {\em Acta Math.}, 199:263--302, 2007.

\bibitem{KOS}
R.~Kenyon, A.~Okounkov, and S.~Sheffield.
\newblock Dimers and amoebae.
\newblock {\em Ann. Math.}, 163:1019--1056, 2006.

\bibitem{Li182}
Z.~Li.
\newblock Fluctuations of dimer heights on contracting square-hexagon lattices.
\newblock \href{https://arxiv.org/abs/1809.08727}{arXiv:1809.08727}.

\bibitem{ZL12}
Z.~Li.
\newblock Critical temperature of periodic {I}sing models.
\newblock {\em Commun. Math. Phys.}, 315:337--381, 2012.

\bibitem{Li13}
Z.~Li.
\newblock Conformal invariance of dimer heights on isoradial double graphs.
\newblock {\em Ann. Inst. H. Poincaré Sect. D.}, 4:273--307, 2017.

\bibitem{ZL20}
Z.~Li.
\newblock Asymptotics of schur functions on almost staircase partitions.
\newblock {\em Electronic Communications in Probability}, 25:13pp, 2020.

\bibitem{ZL202}
Z.~Li.
\newblock Limit shape of perfect matchings on contracting bipartite graphs.
\newblock {\em International Mathematics Research Notices}, 2021.
\newblock \href{http://arxiv.org/abs/1807.06175}{arXiv:2011.10161}.

\bibitem{ZL18}
Z.~Li.
\newblock Schur function at generic points and limit shape of perfect matchings
  on contracting square hexagon lattices with piecewise boundary conditions.
\newblock {\em International Mathematics Research Notices}, 2021.
\newblock \href{http://arxiv.org/abs/1807.06175}{arXiv:1807.06175}.

\bibitem{LV21}
Z.~Li and M.~Vulectic.
\newblock Asymptotics of pure dimer coverings on rail-yard graphs.
\newblock https://arxiv.org/pdf/2110.11393.

\bibitem{IGM15}
I.~G. Macdonald.
\newblock {\em Symmetric Functions and Hall Polynomials}.
\newblock Oxford University Press, 1998.

\bibitem{MW73}
B.~McCoy and T.~Wu.
\newblock {\em The two-dimensional {I}sing model}.
\newblock Harvard University Press, 1973.

\bibitem{oko01}
A.~Okounkov.
\newblock Infinite wedge and random partitions.
\newblock {\em Selecta Math.}, 7:57--81, 2001.

\bibitem{BY09}
B.~Young.
\newblock Computing a pyramid partition generating function with dimer
  shuffling.
\newblock {\em J. Combin. Theory Ser. A}, 116:334--350, 2009.

\bibitem{you10}
B.~Young.
\newblock Generating functions for coluored 3d young diagrams and the
  {D}onaldson-{T}homas invariants of orbifolds.
\newblock {\em Duke Math J.}, 152:115--153, 2010.

\end{thebibliography}
\bibliographystyle{plain}

\end{document}